\documentclass{article}

\usepackage{placeins}
\usepackage{color-edits}	
\addauthor{F}{blue}

\addauthor{D}{purple}

\usepackage{xcolor}

\usepackage{enumitem}

\usepackage[preprint]{neurips_2026}

\usepackage[utf8]{inputenc} 
\usepackage[T1]{fontenc}    
\usepackage{hyperref}       
\usepackage{url}            
\usepackage{booktabs}       
\usepackage{amsfonts}       
\usepackage{nicefrac}       
\usepackage{microtype}      
\usepackage{xcolor}         

\usepackage{amsmath}
\usepackage{amssymb}
\usepackage{mathtools}
\usepackage{amsthm}
\usepackage{dsfont}
\usepackage{algorithm}
\usepackage{algorithmic}

\newcommand{\N}{\mathbb{N}}

\newcommand{\R}{\mathbb{R}}

\DeclareMathOperator*{\argmin}{arg\,min}

\DeclareMathOperator{\kl}{KL}
\DeclareMathOperator{\tv}{TV}

\NewDocumentCommand{\tok}{m m o}{%
  \IfNoValueTF{#3}
    {\mathbf{#1}_{#2}}%
    {#1_{#2}^{(#3)}}%
}

\definecolor{ao(english)}{rgb}{0.0, 0.5, 0.0}
\newcommand\green[1]{\textcolor{ao(english)}{#1}}
\newcommand\red[1]{\textcolor{red}{#1}}
\newcommand\orange[1]{\textcolor{orange}{#1}}

\newcommand{\1}{\mathds{1}}

\newcommand{\iid}{\stackrel{\text{iid}}{\sim}}

\newcommand{\ham}{\mathrm{d}_{\mathrm{H}}}

\DeclarePairedDelimiter{\abs}{\lvert}{\rvert}

\DeclarePairedDelimiter{\ceil}{\lceil}{\rceil}
\DeclarePairedDelimiter{\paren}{\lparen}{\rparen}

\DeclarePairedDelimiter{\braces}{\lbrace}{\rbrace}

\DeclarePairedDelimiterX{\ip}[2]{\langle}{\rangle}{#1,\,#2}

\NewDocumentCommand{\E}{s o m}{%
  \ensuremath{%
    \mathbb{E}%
    \IfValueT{#2}{_{#2}}%
    \IfBooleanTF{#1}{\left[ #3 \right]}{[ #3 ]}%
  }%
}

\NewDocumentCommand{\PP}{s o m}{%
  \ensuremath{%
    \mathbb{P}%
    \IfValueT{#2}{_{#2}}%
    \IfBooleanTF{#1}{\left( #3 \right)}{( #3 )}%
  }%
}

\NewDocumentCommand{\Var}{s o m}{%
  \ensuremath{%
    \mathrm{Var}%
    \IfValueT{#2}{_{#2}}%
    \IfBooleanTF{#1}{\left[ #3 \right]}{[ #3 ]}%
  }%
}

\NewDocumentCommand{\Cov}{s o m m}{%
  \ensuremath{%
    \mathrm{Cov}%
    \IfValueT{#2}{_{#2}}%
    \IfBooleanTF{#1}{\left[ #3 ,\, #4 \right]}{[ #3 ,\, #4 ]}%
  }%
}

\NewDocumentCommand{\card}{m}{| #1 |}

\theoremstyle{plain}
\newtheorem*{theorem*}{Theorem}
\newtheorem*{proposition*}{Proposition}
\newtheorem{theorem}{Theorem}[section]
\newtheorem{proposition}{Proposition}[section]
\newtheorem{lemma}{Lemma}[section]

\theoremstyle{definition}

\newtheorem{assumption}{Assumption}
\theoremstyle{remark}

\usepackage[capitalize,noabbrev,nameinlink]{cleveref}

\crefname{theorem}{theorem}{theorems}
\Crefname{theorem}{Theorem}{Theorems}

\crefname{lemma}{lemma}{lemmas}
\Crefname{lemma}{Lemma}{Lemmas}

\crefname{proposition}{proposition}{propositions}
\Crefname{proposition}{Proposition}{Propositions}

\crefname{corollary}{corollary}{corollaries}
\Crefname{corollary}{Corollary}{Corollaries}

\crefname{definition}{definition}{definitions}
\Crefname{definition}{Definition}{Definitions}

\crefname{assumption}{assumption}{assumptions}
\Crefname{assumption}{Assumption}{Assumptions}

\crefname{remark}{remark}{remarks}
\Crefname{remark}{Remark}{Remarks}

\usepackage[textsize=tiny]{todonotes}

\title{Minimax Private Estimation of Smooth Optimal-Transport Maps}

\author{%
  Clément Lalanne \\
  ANITI, Univ Toulouse, \\INSA Toulouse, CNRS, IMT\\
  Toulouse, France \\
  \texttt{clement.lalanne@math.univ-toulouse.fr} \\
  \And
  David Rodríguez-Vítores \\
  Universidad de Valladolid, IMUVA \\
  Valladolid, Spain \\
  \And
  Franck Iutzeler \\
  ANITI, Univ Toulouse, \\INSA Toulouse, CNRS, IMT\\
  Toulouse, France \\
  \And
  Jean-Michel Loubes \\
  ANITI Université Toulouse \\ Equipe REGALIA INRIA Bordeaux \\ France \\
}

\begin{document}

\maketitle

\begin{abstract}
  We study the problem of estimating smooth optimal transport (OT) maps between two probability distributions under differential privacy (DP) constraints. Leveraging wavelet-based density estimators and recent stability bounds for smooth OT maps, we propose differentially private estimators that apply to both central and local DP models. Our main estimator achieves near-minimax optimal rates in dimension $d \geq 2$, and we complement it with a quantile-based estimator that attains minimax optimal rates in dimension $d=1$ under central DP. We further establish matching minimax lower bounds, confirming the near-optimality of our approach. To the best of our knowledge, this constitutes the first differentially private procedure for OT map estimation with minimax optimality guarantees.
\end{abstract}

\section{Introduction}

Working with real-world data introduces challenges that go beyond solving the problem at hand; in particular, one may wish to protect it against privacy attacks~\cite{narayanan2006break,backstrom2007wherefore,fredrikson2015model,dinur2003revealing,homer2008resolving,loukides2010disclosure,narayanan2008robust,sweeney2000simple,wagner2018technical,sweeney2002k}. To this end, Differential Privacy (DP)~\cite{dwork2006calibrating} has become the gold standard and is now widely employed by prominent institutions~\cite{abowd2018us,erlingsson2014rappor,thakurta2017learning,ding2017collecting}.

Formally, differential privacy~\cite{dwork2014algorithmic} comes in two main flavors: \emph{central} and \emph{local}. Let $\epsilon > 0$ be a \emph{privacy budget}.
In the \emph{central model}, a randomized \emph{mechanism} $\mathcal{M}$ acting on the full dataset $D$ is said to be $\epsilon$-differentially private if, for any pair of neighboring datasets $D \sim D'$ (where the neighboring relation is left general at this point) and any measurable set $A$,
$
\mathbb{P}\left(\mathcal{M}(D)\in A\right) \le e^{\epsilon}\, \mathbb{P}\left(\mathcal{M}(D')\in A\right).
$
In contrast, in the \emph{local model}, each data point $X_i$ is privatized \emph{before} aggregation through a randomized mapping $Q(\cdot\mid X_i)$, often referred to as a \emph{channel}, satisfying
$
Q(A | x, z) \le e^{\epsilon}\, Q(A | x', z) \quad \text{for all } x, x', A, z,
$
where $z$ is a variable used to model the \emph{context}, which may for instance contain information about the privatized data of previous users (allowing, for instance, for interactive mechanisms). We refer the reader to~\cite{butucea2020local} or~\cite{duchi2013local} for more details.
While central DP requires a trusted aggregator, local DP enforces privacy at the level of individual samples, typically at the cost of a more stringent privacy-utility tradeoff. Private mechanisms (or channels) enjoy many desirable properties regarding composition and post-processing. We refer the reader to textbooks such as~\cite{dwork2014algorithmic} for further details.

A flourishing line of work has studied the problem of performing statistics under differential privacy constraints~\cite{diakonikolas2015differentially,karwa2017finite,bun2019privatehypothesis,bun2021privatehypothesis}. A central finding across this literature is that privacy is usually not free~\cite{kamath2019highdimensional,biswas2020coinpress,kamath2020heavytailed,acharya2021differentially,adenali2021unbounded,brown2021covariance,cai2021cost,kamath2022improved,singhal2023polynomial,kamath2023biasvarianceprivacy,kamath2023new,chhor2023robust,beraha2023mcmc}: enforcing differential privacy constraints degrades statistical accuracy, and one of the main goals of the field is to precisely quantify this cost, typically by characterizing the minimax estimation rate under privacy constraints and comparing it to its unconstrained counterpart. In particular, in private nonparametric statistics \cite{wasserman2010statistical,barber2014privacy,lalanne2023about,lalanne2024privatedensity,10.1145/773153.773174,4690986,duchi2013local,duchi2018minimax,butucea2020local,kroll2021density,schluttenhofer2022adaptive,gyorfi2023multivariate,berrett2021strongly,gyorfi2022rate}, the privacy overhead can take many different rates based on the underlying \emph{smoothness} of the problem.
The present paper contributes to this litterature by addressing a problem that has so far remained largely open: the private estimation of smooth optimal transport maps.

Optimal Transport (OT) has emerged as both a powerful analytical framework \cite{villani2009optimal} and a practical tool, finding applications across a wide range of data-driven tasks in Computer Science \cite{DBLP:conf/miccai/FeydyCVP17,DBLP:journals/tog/LavenantCCS18,DBLP:journals/tog/SolomonGPCBNDG15,DBLP:journals/tog/SolomonPKS16}, Machine Learning \cite{DBLP:journals/corr/abs-1811-01124,DBLP:conf/aistats/Alvarez-MelisJJ18,arjovski2017WGAN,DBLP:conf/nips/CanasR12,gordaliza2019obtainingFairness,DBLP:journals/ml/FlamaryCCR18,DBLP:conf/aistats/GenevayPC18,DBLP:conf/aistats/GraveJB19,DBLP:conf/aistats/JanatiCG19,DBLP:conf/nips/MontavonMC16,DBLP:journals/siamis/SchmitzHBMCCPS18,DBLP:conf/nips/StaibCSJ17,DBLP:conf/iclr/LeNSHHX24} and Statistics \cite{del2024central,del2024centralb,DBLP:journals/siamsc/CazellesSBCP18,DBLP:journals/ma/BarrioGLL19,DBLP:journals/simods/KlattTM20,Kroshnin2019StatisticalIF,annurev:/content/journals/10.1146/annurev-statistics-030718-104938,DBLP:journals/entropy/RamdasTC17,RIGOLLET20181228,DBLP:conf/nips/SeguyC15,DBLP:conf/dsw/TamelingM18,DBLP:conf/colt/WeedB19,10.3150/17-BEJ1009}. In many of these applications, OT maps are not only a theoretical construct but a concrete computational object that is explicitly or implicitly estimated and deployed, for instance in domain adaptation \cite{DBLP:conf/eccv/DamodaranKFTC18}, generative modeling \cite{routgenerative}, and fairness-aware learning \cite{gordaliza2019obtaining}. As these applications routinely involve sensitive personal data, the question of how to estimate OT maps while providing formal privacy guarantees arises naturally, and motivates the present work.

Given two probability distributions $P_X$ and $P_Y$ on $\R^d$, the \emph{Wasserstein distance} between them is defined, for any $p \geq 1$, by
\begin{equation}
\label{eq:wasserstein_distance}
W_p^p(P_X, P_Y)
:=
\inf_{\pi \in \Pi(P_X,P_Y)}
\int\|x-y\|^p \, d\pi(x,y),
\end{equation}
where $\Pi(P_X,P_Y)$ denotes the set of couplings between $P_X$ and $P_Y$, that is, the set of joint probability distributions with marginals $P_X$ and $P_Y$. This distance naturally arises from the problem of optimally \emph{transporting} mass from $P_X$ to $P_Y$ via the so-called \emph{OT maps}, defined as
\begin{equation}
\label{eq:ot_map}
    T_0 \in \argmin_{T:\, T \# P_X = P_Y} \int \| T(x) - x \|^p \, dP_X(x),
\end{equation}
where $T \# P$ denotes the \emph{push-forward} of $P$ by $T$. In particular, a central result in optimal transport theory (often referred to as Brenier's theorem \cite[Sec.~1.3.1]{santambrogio2015optimal}) ensures that when $p=2$, and both $P_X$ and $P_Y$ are absolutely continuous with respect to the Lebesgue measure and have finite second moments, the optimization problems \Cref{eq:wasserstein_distance} and \Cref{eq:ot_map} are equivalent. Moreover, the optimal coupling is uniquely defined almost everywhere, its support coincides almost everywhere with the graph of $T_0$, and $T_0$ is the gradient of a convex function. In this case, the map $T_0$ exists and is unique almost everywhere. Whenever these conditions hold, we will therefore refer to the optimal transport map between the distributions under consideration without further justifying its existence. We refer the reader to the textbooks~\cite{villani2009optimal,santambrogio2015optimal} for general references on optimal transport.

In this article, we are interested in estimating the OT map $T_0$ between two distributions $P_X$ and $P_Y$ known only through samples, a problem commonly referred to as \emph{statistical OT}. This problem has attracted considerable attention, in particular in connection with minimax estimation under smoothness assumptions~\cite{chernozhukov2015mongekantorovichdepthquantilesranks,deb2019multivariaterankbaseddistributionfreenonparametric,hutter2021minimax,pooladian2021entropic,muzellec2021nearoptimalestimationsmoothtransport,Gunsilius_2022,Manole2024Plugin,ding2024statisticalconvergenceratesoptimal,balakrishnan2025stabilityboundssmoothoptimal}. Indeed, without smoothness, the problem suffers from the curse of dimensionality, whereas smoothness helps alleviate this issue. As we shall see later in this article, privacy only amplifies this dependence, which further motivates the use of smoothness assumptions to control the privacy noise.

\subsection{Related work}

Statistical optimal transport has a rich literature~\cite{DBLP:conf/aistats/ForrowHNRSW19,DBLP:conf/nips/PerrotCFH16,DBLP:conf/iclr/SeguyDFCRB18,NIPS2017_0070d23b,DBLP:conf/pkdd/CourtyFT14,courty2017OTDomainAdapt}. The most closely related line of work concerns the estimation of OT maps from samples \cite{hutter2021minimax,Manole2024Plugin,pooladian2021entropic}.

The non-private minimax rate for estimating smooth OT maps was first characterized by \cite{hutter2021minimax}, which established that, under appropriate assumptions, the $L^2(P_X)$ error of a transport map estimator is controlled by the suboptimality of its associated potential in the semi-dual problem of \Cref{eq:wasserstein_distance}. Since this semi-dual objective admits a natural empirical counterpart, the authors derived minimax optimal estimators by combining tools from empirical process theory with an approximation strategy that restricts potentials to suitable finite-dimensional approximation spaces.

Another strategy for OT map estimation is to use \emph{plug-in} estimators, which construct transport maps from other intermediate quantities built from the samples, such as empirical OT plans or density estimators~\cite{Manole2024Plugin,pooladian2021entropic}. Our work falls within this framework: we estimate OT maps by post-processing private quantities derived from the data. A key ingredient in our analysis is a recent stability bound from~\cite{balakrishnan2025stabilityboundssmoothoptimal}, which links the error in transport map estimation to that of estimating the source and target densities.

To our knowledge, the only prior work on privately estimating OT maps is \cite{lalanne2025PrivateOTMaps}, which proposes a semi-dual estimator building on \cite{hutter2021minimax}. The central observation is that the semi-dual objective of \cite{hutter2021minimax} has tame sensitivity \cite{dwork2006calibrating}, enabling a private selection mechanism that identifies a good Brenier potential from a finite candidate set and recovers an OT map by taking its gradient. Combined with a covering of the space of admissible potentials, this yields the first upper bound for the problem.

This approach has two significant limitations. First, it is computationally intractable: constructing the covering required by the estimator is infeasible in practice. Consequently, the authors benchmark only the \emph{selection} step on a \emph{simplified} problem instance where the solution is known to belong to a low-dimensional parametric family. Second, the method is suboptimal in every regime: although the paper establishes both upper and lower bounds, a systematic gap persists in the privacy term, leaving the minimax rate unresolved.

Optimal transport also appears in several works under differential privacy, either as a problem-specific tool~\cite{rakotomamonjy2021DPslicedWasserstein,harder2021dp,segag2023gradientFlow,tien2019DPOTdomainAdapt,rodriguezvitores2025slicedprivacy} or as a component of novel privacy paradigms~\cite{pierquin2024Pufferfish,Kawamoto2019localObfuscation,yang2024wassersteinDP}. None of these works are directly related to ours.

\subsection{Contributions}

The main contributions of this article are as follows.
\begin{itemize}
    \item We introduce a private plug-in method for OT-map estimation based on private density estimation, which applies under both central and local differential privacy. In the local model, to the best of our knowledge, this is the first method for private OT-map estimation.
    
    \item Using the stability bounds of \cite{balakrishnan2025stabilityboundssmoothoptimal}, we show that this method is (near) minimax optimal for $d \geq 2$ under both privacy models. In particular, this closes the gap between upper and lower bounds left by \cite{lalanne2025PrivateOTMaps}.
    
    \item For $d=1$, we reduce OT-map estimation to quantiles estimation, yielding a general plug-in approach based on private quantiles estimators. Combined with \cite{kaplan2022differentially}, this gives a near minimax optimal mechanism under central DP, leaving only the local-DP case open.
    
    \item We provide numerical experiments illustrating the benefit of smoothness for private OT-map estimation, and  highlight the computational limitations of the method.
\end{itemize}
A comparison with prior work is given in \Cref{tableComparisonResults}.

\begin{table*}[h!]
\caption{Comparison with prior work.}
\label{tableComparisonResults}
\vskip 0.15in
\begin{center}
\begin{small}
\begin{sc}
\begin{tabular}{lccc}
\toprule
Work & Privacy &  (Near) Optimality & Estimation Rate \\
\midrule
\cite{hutter2021minimax,Manole2024Plugin,balakrishnan2025stabilityboundssmoothoptimal}   & \red{No Privacy}&  \green{YES} & \tiny{$\tilde{O} \left( n^{-1} + n^{- \frac{2 (s+1)}{2 s + d}} \right)$}\\
\midrule
{\cite{lalanne2025PrivateOTMaps}}   & \orange{Central DP} &  \red{NO} & \tiny{$\tilde{O} \left( n^{-1} + n^{- \frac{2 (s+1)}{2 s + d}} + (n \epsilon)^{-\frac{2(s+1)}{2s+d + 2}} \right)$}\\
\midrule
\textbf{This Work}   & \green{Central \& Local DP} &  \shortstack[c]{\green{YES} \\ \tiny{\orange{Except under}} \\ \tiny{\orange{LDP when $d=1$\footnotemark}}} & \tiny{$\tilde{O} \left( n^{-1} + n^{- \frac{2 (s+1)}{2 s + d}} + \pi_{n, \epsilon}^{- \frac{2 (s+1)}{ s + d}} \right)$} \\
\bottomrule
\end{tabular}
\end{sc}
\end{small}
\end{center}
Here, $\pi_{n, \epsilon} := n \epsilon$ when working with $\epsilon$-central DP and $\pi_{n, \epsilon} := \sqrt{n} \epsilon$ when working with $\epsilon$-local DP.
\vskip -0.2in
\end{table*}

\footnotetext{Under LDP, $d=1$, we show  an estimation rate in \tiny{$O \left( n^{-1} + n^{- \frac{2 (s+1)}{2 s + d}} + (\sqrt{n} \epsilon)^{- \frac{2 (s+1)}{ s + \frac{3}{2}}} \right)$}.}

\subsection{Organization of the article}

The remainder of the article is organized as follows. In \Cref{sec:density_estimation_wasserstein}, we introduce the private querying mechanisms used throughout the paper. Then, in \Cref{sec:transport_map_estimation}, we present our main results on the private estimation of smooth OT maps. A discussion of optimality, together with broader implications of our results, is given in \Cref{sec:discussion}. Finally, numerical experiments are reported in \Cref{sec:exp}.

\paragraph{Notations and Conventions.}

$\Omega = [0, 1]^d$ is the support of the distributions for the entire article. 
When considering differential privacy, it is convenient to define $\pi_{n, \epsilon} := n \epsilon$ when working with $\epsilon$-central DP and $\pi_{n, \epsilon} := \sqrt{n} \epsilon$ when working with $\epsilon$-local DP. In any case,
the asymptotic regimes are considered when $\min (n, \pi_{n, \epsilon} ) \rightarrow + \infty$. 
For any $k \in \N$, $\mathcal{C}^k(\mathcal{S})$ denotes the set of functions on a space $\mathcal{S}$ that are $k$ times continuously differentiable, and $\mathcal{C}^{\infty}(\mathcal{S})$ is defined as $\cap_{k \in \N} \mathcal{C}^k(\mathcal{S})$. 
Whenever applicable, $\nabla$ and $\nabla^2$ are used to refer to gradient and Hessian operators, respectively.
For any set $S$, $\card{S}$ denotes its cardinality. 
The Hölder norm of order $\alpha$ on $S$ is denoted by $\| T \|_{C^{\alpha}(S)}$. 
For a measure $\mu$, $\|  \cdot \|_{L^2(\mu)}$ is the usual $L^2$ norm with reference measure $\mu$. 
For the full construction of the functional spaces, norm and wavelet system, we refer the reader to Appendix A from \cite{Manole2024Plugin} and we recall below the main notations and conventions for this article.
Unless explicitly mentioned otherwise, $\Psi$ refers to the boundary-corrected wavelet system on $\Omega$ built from the compactly-supported $N$-th Daubechies scaling and wavelet functions that are $r$ times continuously differentiable where $r = 0.18(N - 1)$ for $N \geq 2$. $\mathfrak B^s_{p, q}$ and $\| \cdot \|_{\mathfrak B^s_{p, q}}$ are respectfully used to denote the Besov space  and the Besov norm and similarly whenever we work in a Hölder space of regularity $\alpha$ or Besov space of regularity $s$, we implicitly assume that $r$ is strictly greater than $\lceil s \rceil$ or $\alpha$. For a density of probability $f$ we may abuse the notation and refer to the induced probability measure by $f$ as well. $\ham$ is the Hamming distance.

\paragraph{On the constants.}
This article will adopt, unless specifically mentioned otherwise, the usual nonparametric convention of considering as a constant any quantity that does not depend on the sample size $n$ nor on the privacy budget $\epsilon$.

\section{Tools from Private Density Estimation}
\label{sec:density_estimation_wasserstein}

This section introduces the private density estimation primitives used in our main estimator later in the article. Throughout, $f_Z$ denotes a probability density on $\Omega$, assumed to belong to $L^2(\Omega)$, from which we observe $n$ i.i.d.\ samples $Z_1, \dots, Z_n \iid f_Z$. In the central model, we adopt the replacement neighboring relation, meaning that $(Z_1, \dots, Z_n) \sim (Z_1', \dots, Z_n')$ whenever the two datasets differ in exactly one coordinate.

Let
$
    \Psi := \Phi \cup \bigcup_{j = j_0 }^{\infty} \Psi_j
$
be an orthonormal system of functions, where $\Phi$ and, for each $j$, $\Psi_j$ are sets of functions on $\Omega$. This notation is chosen because, later in the article, we specialize this family to a system of boundary-corrected Daubechies wavelets; we refer to \cite{Manole2024Plugin} for additional background on this construction.
A projection estimator of the density $f_Z$ is then given by
$
    \tilde{f}_Z := \sum_{\xi \in \Phi} \hat{\beta}_{\xi} \xi + \sum_{j = j_0 }^{J} \sum_{\xi \in \Psi_j} \hat{\beta}_{\xi} \xi
$
where, for every $\xi$, $\hat{\beta}_{\xi} := \frac{1}{n} \sum_{i=1}^n \xi(Z_i)$,
and where $J$ is a tunable maximum resolution level. Since the final estimator must be a \emph{proper} density, we also define
$
    \hat{f}_Z :=(\tilde{f}_Z \vee 0) / (\int_{\Omega} (\tilde{f}_Z \vee 0)),
$
which is pointwise non-negative and integrates to $1$.
In the context of differential privacy, the collection of coefficients $\hat{\beta}_{\xi}$ can be viewed as a data-dependent query that must be privatized.

We impose the following assumption on the wavelet system.
\begin{assumption}[Wavelet scaling and support partition]
\label{ass:wavelet_scaling_support}
   There exist constants $C_1, C_2, C_3, C_4, C_5 \geq 0$ such that
   (i) $|\Phi| \leq C_1$ and $\sup_{\xi \in \Phi}  \| \xi \|_{\infty} \leq C_2$;
(ii) for all $j \geq j_0$ and all $\xi \in \Psi_j$, there exists a rectangle $I_{\xi} \subseteq \Omega$ that contains the support of $\xi$;
(iii) for all $j \geq j_0$, $\| \sum_{\xi \in \Psi_j} \1 (\cdot \in I_{\xi})\|_{\infty} \leq C_3$;
(iv) $\sup_{j \geq j_0 } \sup_{\xi \in \Psi_j} 2^{-jd/2} \| \xi \|_{\infty} \leq C_4$;
(v) for all $j \geq j_0$, $|\Psi_j| \leq C_5 2^{j d}$.
\end{assumption}

Under this assumption, we obtain the following sensitivity bound.

\begin{proposition}[Sensitivity]
\label{prop:sensitivity}
    If $\Psi$ satisfies \Cref{ass:wavelet_scaling_support}, the query
$q : z \mapsto (\xi(z); \xi \in \Phi \cup \bigcup_{j = j_0 }^{J} \Psi_j)$ satisfies
$
    \sup_{z, z' \in \Omega} \|q(z) - q(z') \|_{1} \leq \underbrace{2 C_1 C_2 + 6 C_3 C_4 2^{(J+1)d/2} }_{=: \Delta}\;.
$
\end{proposition}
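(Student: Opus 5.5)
We split the $\ell_1$ norm of $q(z)-q(z')$ along the partition of the index set into $\Phi$ and the detail levels $\Psi_j$, $j_0\le j\le J$, and use the triangle inequality on each piece:
\[
\|q(z)-q(z')\|_1 \le \sum_{\xi\in\Phi}\bigl(|\xi(z)|+|\xi(z')|\bigr) + \sum_{j=j_0}^{J}\sum_{\xi\in\Psi_j}\bigl(|\xi(z)|+|\xi(z')|\bigr).
\]
It is therefore enough to bound $\sup_{z\in\Omega}\sum_{\xi}|\xi(z)|$ over each block and then double the result.

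\textbf{Scaling block.} By \Cref{ass:wavelet_scaling_support}(i), the set $\Phi$ contains at most $C_1$ functions, each bounded in sup norm by $C_2$. Hence $\sum_{\xi\in\Phi}|\xi(z)|\le C_1C_2$ for every $z$. This block contributes $2C_1C_2$, which is the first term of $\Delta$.

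\textbf{Detail blocks.} Fix $j$ and a point $z$. By (ii), $\xi(z)\neq 0$ forces $z\in I_\xi$. So
\[
\sum_{\xi\in\Psi_j}|\xi(z)| \le \sup_{\xi\in\Psi_j}\|\xi\|_\infty \sum_{\xi\in\Psi_j}\1(z\in I_\xi).
\]
By (iv) the supremum is at most $C_4 2^{jd/2}$, and by (iii) the count is at most $C_3$. This gives a per-level bound of $C_3C_4 2^{jd/2}$, uniformly in $z$. Note that (v), the cardinality bound, is not needed here: the bounded-overlap property (iii) is what prevents the sum from growing with $|\Psi_j|$. Doubling for $z$ and $z'$, the detail part is at most $2C_3C_4\sum_{j=j_0}^{J}2^{jd/2}$.

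\textbf{Geometric sum (the only step needing care).} Since $j_0\ge 0$,
\[
\sum_{j=j_0}^{J}2^{jd/2}\le \frac{2^{(J+1)d/2}}{2^{d/2}-1}.
\]
For $d\ge 1$ we have $2^{d/2}-1\ge\sqrt2-1>1/3$, so the sum is at most $3\cdot 2^{(J+1)d/2}$. The detail part is then at most $6C_3C_4 2^{(J+1)d/2}$, which is the second term of $\Delta$. The constant $6$ comes exactly from the factor $2$ (two points) times the factor $3$ (bound on $1/(\sqrt2-1)$). Adding the two blocks gives $\Delta$.

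If instead $j_0$ could be negative, the same bound still holds: extend the sum to $-\infty$ and it remains a convergent geometric series with the same upper bound.
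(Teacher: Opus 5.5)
Your proof is correct and follows essentially the same route as the paper: you split the $\ell_1$ norm into the $\Phi$ block (bounded by $2C_1C_2$) and the levels $\Psi_j$ (each bounded by $2C_3C_4 2^{jd/2}$ via bounded overlap and the scaling bound), and you then sum the geometric series using $(2^{d/2}-1)^{-1}\le(\sqrt2-1)^{-1}\le 3$. The assumption $j_0\ge 0$ is not actually needed for the geometric bound, since the term $2^{j_0 d/2}$ you drop is positive for every $j_0$.
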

\begin{proof}
    See \Cref{proof_of_prop:sensitivity}.
\end{proof}

\paragraph{On \Cref{ass:wavelet_scaling_support}.}
As detailed in \cite{Manole2024Plugin}, boundary-corrected Daubechies wavelets satisfy \Cref{ass:wavelet_scaling_support}. In this article, we therefore fix $\Psi$ to be this system. Under this choice, \Cref{ass:wavelet_scaling_support} is not restrictive, since we explicitly work with a family of functions known to satisfy it. The assumption is included mainly for modularity, as it allows the privacy argument to be stated for alternative approximation spaces satisfying the same structural properties. The corresponding utility guarantees, however, may depend on the specific system under consideration.

\begin{algorithm}[tb]
  \caption{Querying mechanism, central DP}
  \label{alg:density_estimation_centralDP}
  \begin{algorithmic}
    \STATE {\bfseries Input:} data $(Z_i; 1 \leq i \leq n)$, privacy budget $\epsilon > 0$, constants $C_1, C_2, C_3, C_4$ from \Cref{ass:wavelet_scaling_support}, Depth $J$.
    \STATE \textbf{Compute} $\Delta$ given in \Cref{prop:sensitivity}.
    \STATE {\bfseries Query and Publish:} the vector of private estimated decomposition coefficients $(\hat \beta_{\xi, \textrm{priv}}; \xi \in \Phi \cup \bigcup_{j = j_0 }^{J} \Psi_j) := (\frac{1}{n} \sum_{i=1}^n \xi(Z_i) + \frac{\Delta}{n \epsilon} \mathcal{L}(1); \xi \in \Phi \cup \bigcup_{j = j_0 }^{J} \Psi_j)$.
  \end{algorithmic}
\end{algorithm}

\begin{algorithm}[tb]
  \caption{Querying mechanism, local DP}
  \label{alg:density_estimation_localDP}
  \begin{algorithmic}
    \STATE {\bfseries Input:} data $(Z_i; 1 \leq i \leq n)$, privacy budget $\epsilon > 0$, constants $C_1, C_2, C_3, C_4$ from \Cref{ass:wavelet_scaling_support}, Depth $J$.
    \STATE \textbf{Compute} $\Delta$ given in \Cref{prop:sensitivity}.
    \FOR{every individual $1 \leq i \leq n$}
    \STATE {\bfseries Query:} the privatized individual contribution to the coefficients $(\hat \beta_{\xi, \textrm{priv}}^{(i)}; \xi \in \Phi \cup \bigcup_{j = j_0 }^{J} \Psi_j) := ( \xi(Z_i) + \frac{\Delta}{\epsilon} \mathcal{L}(1); \xi \in \Phi \cup \bigcup_{j = j_0 }^{J} \Psi_j)$.
    \ENDFOR
    \STATE {\bfseries Publish:} the vector of private estimated decomposition coefficients $(\hat \beta_{\xi, \textrm{priv}}; \xi \in \Phi \cup \bigcup_{j = j_0 }^{J} \Psi_j) := (\frac{1}{n} \sum_{i=1}^n \hat \beta_{\xi, \textrm{priv}}^{(i)}; \xi \in \Phi \cup \bigcup_{j = j_0 }^{J} \Psi_j)$.
  \end{algorithmic}
\end{algorithm}

\FloatBarrier

The estimator $\hat{f}_Z$ is not differentially private, and must therefore be privatized. This is done in \Cref{alg:density_estimation_centralDP} and \Cref{alg:density_estimation_localDP} through the definition of the private coefficients $\beta_{\xi, \textrm{priv}}$.

As above, we also define the corresponding possibly improper and proper density estimators
$\tilde{f}_{Z, \textrm{priv}} := \sum_{\xi \in \Phi} \hat{\beta}_{\xi, \textrm{priv}} \xi + \sum_{j = j_0 }^{J} \sum_{\xi \in \Psi_j} \hat{\beta}_{\xi, \textrm{priv}} \xi$
and
$\hat{f}_{Z, \textrm{priv}} :=(\tilde{f}_{Z, \textrm{priv}} \vee 0) / (\int_{\Omega} (\tilde{f}_{Z, \textrm{priv}} \vee 0))$.

Their privacy guarantees are stated in \Cref{th:privacy_guarantees_density}.

\begin{theorem}[Privacy of \Cref{alg:density_estimation_centralDP} and \Cref{alg:density_estimation_localDP}]
\label{th:privacy_guarantees_density}
    If the wavelet system $\Psi$ satisfies \Cref{ass:wavelet_scaling_support}, then \Cref{alg:density_estimation_centralDP} (resp. \Cref{alg:density_estimation_localDP}) achieves $\epsilon$-central differential privacy (resp. $\epsilon$-local differential privacy). Furthermore, by the post-processing property of differential privacy, the objects $\tilde{f}_{Z, J, \textrm{priv}}$ and $\hat{f}_{Z, J, \textrm{priv}}$ satisfy the same privacy guarantees.
\end{theorem}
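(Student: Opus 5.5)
The plan is to reduce both claims to the standard Laplace mechanism, using the $\ell_1$-sensitivity bound of \Cref{prop:sensitivity}, and then to finish with post-processing. Throughout, $\mathcal{L}(1)$ denotes a standard Laplace variable with density $\tfrac12 e^{-|x|}$. Independent copies are drawn for each coordinate $\xi$ and, in the local model, for each individual.

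\textbf{Central model.} The released vector is $\hat\beta_{\textrm{priv}} = \bar q(D) + \frac{\Delta}{n\epsilon} W$. Here $\bar q(D) := \frac1n\sum_i q(Z_i)$, and $W$ has i.i.d.\ $\mathcal{L}(1)$ coordinates indexed by the finite set $\Phi \cup \bigcup_{j=j_0}^J \Psi_j$.

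First, take neighbouring datasets $D \sim D'$ that differ only in coordinate $i$. Then
\[
\|\bar q(D) - \bar q(D')\|_1 = \tfrac1n \|q(Z_i) - q(Z_i')\|_1 \le \Delta/n
\]
by \Cref{prop:sensitivity}.

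Second, the output has a density $p_D(y) \propto \exp\!\big(-\tfrac{n\epsilon}{\Delta}\|y - \bar q(D)\|_1\big)$. The triangle inequality then gives
\[
p_D(y)/p_{D'}(y) \le \exp\!\big(\tfrac{n\epsilon}{\Delta}\|\bar q(D)-\bar q(D')\|_1\big) \le e^{\epsilon}
\]
for every $y$. Integrating this bound over any measurable $A$ yields $\PP{\mathcal{M}(D)\in A} \le e^\epsilon \PP{\mathcal{M}(D')\in A}$.

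\textbf{Local model.} Each user $i$ releases $q(Z_i) + \frac{\Delta}{\epsilon}W^{(i)}$, where the noise $W^{(i)}$ is independent of everything else. The channel is therefore non-interactive, meaning it does not depend on the context $z$. The same density-ratio computation applies, now with $\|q(x)-q(x')\|_1 \le \Delta$ for arbitrary $x,x' \in \Omega$. It gives $Q(A\mid x, z) \le e^\epsilon Q(A\mid x', z)$ for all $x, x', A, z$, which is exactly $\epsilon$-local DP. The published average $\frac1n\sum_i \hat\beta^{(i)}_{\textrm{priv}}$ is a function of the privatized outputs only, so no further argument is needed for it.

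\textbf{Post-processing.} The objects $\tilde f_{Z,\textrm{priv}}$ and $\hat f_{Z,\textrm{priv}}$ are deterministic measurable functions of the released coefficient vector: a finite linear combination of fixed wavelets, then the positive part, then normalization. They therefore inherit the same guarantee by post-processing. One should note the degenerate case $\int(\tilde f\vee 0)=0$, where one sets $\hat f$ to be, say, the uniform density; this is still a fixed measurable map.

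The only real point of care is the sensitivity bound, which is already supplied by \Cref{prop:sensitivity}. In particular, it is there that the count of coefficients $\xi(z)$ that can be nonzero at a given $z$ is controlled via the support-overlap condition (iii). Everything else is the textbook Laplace-mechanism argument~\cite{dwork2014algorithmic}.
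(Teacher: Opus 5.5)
Your proposal is correct and follows essentially the same route as the paper. By \Cref{prop:sensitivity}, the $\ell_1$-sensitivity of the averaged query is $\Delta/n$ under replacement, and $\Delta$ per user in the local model. The Laplace mechanism with scale $\Delta/(n\epsilon)$ (resp.\ $\Delta/\epsilon$) then gives $\epsilon$-DP, and post-processing extends the guarantee to $\tilde f_{Z,\textrm{priv}}$ and $\hat f_{Z,\textrm{priv}}$. Your explicit density-ratio computation and your handling of the degenerate case $\int_\Omega(\tilde f\vee 0)=0$ are small additions that the paper leaves implicit.
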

\begin{proof}
    See \Cref{proof_of_th:privacy_guarantees_density}.
\end{proof}

\section{Improved Rates for Differentially Private Smooth Optimal Transport Map Estimation}
\label{sec:transport_map_estimation}

This section introduces and analyzes our two main estimation procedures, namely a plug-in estimator based on private projection density estimation and an estimator based on private quantiles estimation.

Let $P_X$ and $P_Y$ be two probability distributions on a common domain $\Omega$. We assume access to independent samples $X_1,\dots,X_{n_X} \sim P_X$ and $Y_1,\dots,Y_{n_Y} \sim P_Y$, where the $X$-samples and $Y$-samples are mutually independent.

Under \emph{central differential privacy}, we define the neighboring relation
$
\bigl((X_1,\dots,X_{n_X}),(Y_1,\dots,Y_{n_Y})\bigr)
\sim
\bigl((X_1',\dots,X_{n_X}'),(Y_1',\dots,Y_{n_Y}')\bigr)
$
whenever either $(X_1,\dots,X_{n_X}) \sim (X_1',\dots,X_{n_X}')$ or $(Y_1,\dots,Y_{n_Y}) \sim (Y_1',\dots,Y_{n_Y}')$, but not both, where the relation \(\sim\) on each sample is as defined in the previous section. As in~\cite{lalanne2025PrivateOTMaps}, this formulation implicitly assumes that the two samples are obtained through separately identified channels. Likewise, in the \emph{local differential privacy} setting, we assume that the observations are partitioned according to their source distribution before the local privatization channels are applied.

\paragraph{The main estimator.}
Our first estimator is obtained by plugging private density estimators into the optimal transport problem. Concretely, we construct private density estimators $\hat{f}_{X, J, \textrm{priv}}$ and $\hat{f}_{Y, J, \textrm{priv}}$ for the marginals $P_X$ and $P_Y$ using the procedure of \Cref{sec:density_estimation_wasserstein}. In particular, the decomposition coefficients are privatized via \Cref{alg:density_estimation_centralDP} in the central model and via \Cref{alg:density_estimation_localDP} in the local model.
We then define
$
    \hat{T}_{\textrm{priv}} := \textrm{OT-MAP}(\hat{f}_{X, \textrm{priv}}, \hat{f}_{Y, \textrm{priv}}),
$
where $\textrm{OT-MAP}(\hat{f}_{X, \textrm{priv}}, \hat{f}_{Y, \textrm{priv}})$ denotes the optimal transport map between the probability distributions on $\Omega$ with densities $\hat{f}_{X, \textrm{priv}}$ and $\hat{f}_{Y, \textrm{priv}}$. Since this map is computed solely from the privatized density estimators, its privacy is an immediate consequence of the post-processing property of differential privacy.

\paragraph{Upper-bound.}
As we seek to leverage the smoothness of the densities in the estimation of transport maps, we work under the following standard assumption from the literature on smooth OT map estimation.

\begin{assumption}
\label{ass:regularity_transport_problem}
    There exist $s > 0$, $M>0$ and $\gamma >0$ such that $f_X, f_Y \in \braces{f \in \mathcal{C}^s(\Omega): \| f\|_{C^s(\Omega)} \leq M, \gamma > f > \gamma^{-1} }$ and there exist $0 <L_1 \leq L_2$ and $\phi : \Omega \rightarrow \R$ that is twice continuously differentiable, such that $\nabla \phi (\Omega) \subseteq \Omega$, $ L_1 I_d \preceq\nabla^2 \phi \preceq L_2 I_d $  pointwise and $\nabla \phi \# P_X = P_Y$.
\end{assumption}

We can now state our main utility guarantee for the above mechanism.

\begin{theorem}
\label{th:main_rate}
    If $P_X$ and $P_Y$ satisfy \Cref{ass:regularity_transport_problem} and $n_X = n_Y = n$, then for a privacy budget $\epsilon$, tuning
    $ 2^J \asymp \min \paren*{n^{\frac{1}{d + 2s}}, (\pi_{n,\epsilon})^{\frac{1}{\max(d;3/2)+s}}} $
 leads to the error
\begin{align*}
   \E*{\| \hat{T}_{\textrm{priv}} - T \|^2_{L^2({P_X})}}
    \lesssim 
    &
    \left\{
    \hspace*{-0.2cm}\begin{array}{ll}
       \max \paren*{n^{- \frac{2 (s + 1)}{ 2 s + d}}, (\pi_{n,\epsilon})^{- \frac{2 (s + 1)}{ s + d}}}   &  \text{for $d\geq 3 $}\\
            \max \paren*{n^{- 1}, (\pi_{n,\epsilon})^{- \frac{2 (s + 1)}{ s + 2}}} \hspace*{-0.1cm}  \times \hspace*{-0.1cm} \textrm{\small PolyLog}(n, \epsilon) & \text{for $d = 2 $ }\\
     \max \paren*{n^{- 1 }, (\pi_{n,\epsilon})^{- \frac{2 (s + 1)}{ s + 3/2}}} &  \text{for $d = 1 $ }
    \end{array} \right.
\end{align*}
    where $T$ the optimal transport map between $P_X$ and $P_Y$. We recall that $\pi_{n \epsilon} := n \epsilon$ when working under central DP and $\pi_{n \epsilon} := \sqrt{n} \epsilon$ when working under local DP.
\end{theorem}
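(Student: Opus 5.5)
The proof combines a stability bound for smooth OT maps with bias--variance bounds for the private wavelet density estimators. I will work on a high-probability good event and handle its complement separately.

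\textbf{Step 1: stability reduction.} I would invoke the stability result of \cite{balakrishnan2025stabilityboundssmoothoptimal}. Under \Cref{ass:regularity_transport_problem}, and provided the estimated densities are bounded above and below by constants (say by $2\gamma$ and $(2\gamma)^{-1}$), it gives
\[
\| \hat{T}_{\textrm{priv}} - T \|^2_{L^2(P_X)} \lesssim \| \hat f_{X,\textrm{priv}} - f_X\|^2_{\mathfrak B^{-1}_{2,2}} + \| \hat f_{Y,\textrm{priv}} - f_Y\|^2_{\mathfrak B^{-1}_{2,2}},
\]
that is, a negative Sobolev norm of order $-1$, up to logarithmic factors in the borderline case. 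Since $\Omega$ is bounded and $\nabla\phi(\Omega)\subseteq\Omega$, the loss is bounded by a constant on the complement of the good event. It therefore suffices to do two things. First, show that the good event, on which $\tilde f_{Z,\textrm{priv}}$ is uniformly close to $f_Z$ in sup norm, has probability $1-O(\text{rate})$. Second, bound $\E{\|\tilde f_{Z,\textrm{priv}}-f_Z\|^2_{\mathfrak B^{-1}_{2,2}}}$ on that event. On the good event the truncation $\vee 0$ is inactive and the normalizing constant is $1+O(\|\tilde f-f\|_{\infty})$, so $\hat f$ and $\tilde f$ differ only by a negligible amount.

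\textbf{Step 2: error decomposition in wavelet coordinates.} The $\mathfrak B^{-1}_{2,2}$ norm is equivalent to the weighted norm $\sum_j 2^{-2j}\sum_{\xi\in\Psi_j}\beta_\xi^2$, plus the coarse-level terms. Each coefficient error splits into sampling noise and Laplace noise, and the truncation at level $J$ adds a bias.
\begin{itemize}
\item \emph{Bias.} Truncation gives $\sum_{j>J}2^{-2j}\sum_\xi \beta_\xi^2 \lesssim 2^{-2J(s+1)}$ since $f\in\mathcal C^s\subset \mathfrak B^s_{\infty,\infty}$.
\item \emph{Sampling variance.} This is $\sum_{j\le J}2^{-2j}2^{jd}/n$. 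It is $\lesssim 2^{J(d-2)}/n$ for $d\ge3$, $J/n$ for $d=2$, and $1/n$ for $d=1$.
\item \emph{Privacy variance.} Each coefficient receives independent Laplace noise of variance $\asymp \Delta^2/\pi_{n,\epsilon}^2$ with $\Delta\asymp 2^{Jd/2}$ by \Cref{prop:sensitivity}. In the local case, averaging $n$ draws of scale $\Delta/\epsilon$ gives variance $\Delta^2/(n\epsilon^2)$. The total privacy contribution is therefore $\frac{2^{Jd}}{\pi_{n,\epsilon}^2}\sum_{j\le J}2^{j(d-2)}$. This is $\asymp 2^{J(2d-2)}/\pi^2$ for $d\ge 2$, and $2^{J}/\pi^2$ for $d=1$, where the sum converges.
\end{itemize}

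\textbf{Step 3: balancing the terms.} Balancing the bias $2^{-2J(s+1)}$ against $2^{J(2d-2)}/\pi^2$ gives $2^J\asymp \pi^{1/(s+d)}$ and the rate $\pi^{-2(s+1)/(s+d)}$. For $d=1$, balancing against $2^J/\pi^2$ gives $2^J\asymp\pi^{1/(s+3/2)}$ and the rate $\pi^{-2(s+1)/(s+3/2)}$. This matches $\max(d,3/2)$ in the tuning. The sampling terms give $n^{-2(s+1)/(2s+d)}$, or $n^{-1}$ with a log factor when $d\le2$. Taking $2^J$ to be the minimum of the two prescriptions makes both error sources at most the larger rate, which gives the stated maximum.

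\textbf{Step 4: main obstacle, the sup-norm event.} The hardest part is showing that the private estimator is uniformly bounded away from $0$ and $\infty$ with high probability, as required by the stability theorem. Three pieces enter.
\begin{itemize}
\item The bias in sup norm is $2^{-Js}\to0$.
\item Sampling fluctuations are $\sqrt{J2^{Jd}/n}\to0$ under $2^J\le n^{1/(2s+d)}$, by Bernstein's inequality plus a union bound over $\lesssim 2^{Jd}$ coefficients, using the bounded overlap in \Cref{ass:wavelet_scaling_support}(iii).
\item The Laplace part has sup norm $\lesssim \sum_{j\le J} 2^{jd/2}\max_\xi|\text{noise}_\xi| \lesssim 2^{Jd}\,J/\pi$. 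I need this to vanish with failure probability polynomially small in $\pi$.
\end{itemize}
With $2^J\asymp\pi^{1/(s+d)}$ one gets $2^{Jd}/\pi=\pi^{-s/(s+d)}\to0$, so the Laplace term vanishes. For $d=1$ the exponent is $\pi^{1/(s+3/2)-1}\to0$ as well. Laplace tails then give exponentially small failure probability. On the complement the loss is $O(1)$, which is negligible compared with the rate. If the stability bound instead requires Hölder control of the estimated densities, I would use the same wavelet concentration argument at the $\mathcal C^{\alpha}$ level with small $\alpha$.
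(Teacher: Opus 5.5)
Your proposal follows the paper's proof. You restrict to a high-probability event on which the private densities are bounded above and below, and use the stability bound of \cite{balakrishnan2025stabilityboundssmoothoptimal} to reduce to density estimation. You then control the density error in a negative-order Besov norm through a bias/sampling/privacy decomposition in wavelet coordinates with $\Delta\asymp 2^{Jd/2}$, and tune $J$ as stated.

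\textbf{On Step 1.} Two points need stating more precisely.
\begin{itemize}
\item The cited bound is stated in terms of $W_2^2(f_X,\hat f_{X,\textrm{priv}})+W_2^2(f_Y,\hat f_{Y,\textrm{priv}})$, and it measures the error in $L^2(\hat f_{X,\textrm{priv}})$.
\item The paper applies it with no regularity assumption on the estimates, so your H\"older fallback is unnecessary. The lower bound on $\hat f_{X,\textrm{priv}}$ on the good event is what lets you change measure to $L^2(P_X)$ and then pass from $W_2$ to a negative norm.
\end{itemize}

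\textbf{On the choice of norm.} The paper passes to $\mathfrak B^{-1}_{2,1}$ via Theorem 4 of \cite{NilesWeed2019Minimax}, and handles the $\ell_1$ sum over levels with a weighted Cauchy--Schwarz. Your $\mathfrak B^{-1}_{2,2}$ route is also legitimate, using Peyre's $\dot H^{-1}$ bound, Poincar\'e, and the wavelet characterization of $H^1(\Omega)$. It gives a factor $J$ rather than $J^2$ for $d=2$. Note that your $d=2$ privacy term should read $J2^{2J}/\pi_{n,\epsilon}^2$.

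\textbf{On the sup-norm event.} Your direct sup-norm bound, via the bounded overlap in \Cref{ass:wavelet_scaling_support} plus a union bound, is simpler than the covering argument the paper imports from \cite{Manole2024Plugin}. It suffices.

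\textbf{The normalization step fails as written.} You say the normalizing constant is $1+O(\|\tilde f-f\|_\infty)$ and hence negligible. But the Laplace noise alone gives $\E{\|\tilde f_{Z,\textrm{priv}}-f_Z\|_\infty^2}\gtrsim 2^{2Jd}/\pi_{n,\epsilon}^2$. This exceeds your target privacy term ($2^{J(2d-2)}/\pi_{n,\epsilon}^2$ for $d\ge 2$, and $2^J/\pi_{n,\epsilon}^2$ for $d=1$) by a positive power of $2^J$. So controlling $\int_\Omega\tilde f_{Z,\textrm{priv}}-1$ through the sup norm loses the rate.

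What is needed, and what \Cref{lemma:control_normalization_factor} uses, is the identity $\int_\Omega\tilde f_{Z,\textrm{priv}}-1=\langle \tilde f_{Z,\textrm{priv}}-f_Z,\1\rangle$ together with $\1\in\mathrm{span}(\Phi)$.
\begin{itemize}
\item Only the $O(1)$ coarse coefficients enter. In fact only their Laplace noise matters, since the empirical coarse part integrates to exactly $1$.
\item Hence this quantity is bounded by a constant times $\|\tilde f_{Z,\textrm{priv}}-f_Z\|_{\mathfrak B^{-1}_{2,2}}$, and its second moment is $\lesssim 2^{Jd}/\pi_{n,\epsilon}^2$.
\end{itemize}
That is within the rate. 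However, it is of the same order as the privacy term when $d=1$, so it is not negligible.
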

\begin{proof}
    See \Cref{proof_of_th:main_rate}
\end{proof}

\paragraph{On \Cref{ass:regularity_transport_problem}.}
Let us discuss the assumptions gathered in \Cref{ass:regularity_transport_problem}. The first is the Hölder regularity of the source and target densities. This is precisely how smoothness is quantified here and should be viewed as a structural feature of the problem rather than as a technical artifact: it is what allows one to mitigate the curse of dimensionality in the rates. The lower and upper bounds on the densities, as well as the bounds on the Hessian of the transport potential, are standard in the literature, both under differential privacy \cite{lalanne2025PrivateOTMaps} and in the non-private setting \cite{hutter2021minimax,Manole2024Plugin}. Moreover, it is known that in statistical optimal transport, even without privacy, the rates can differ substantially depending on whether a lower bound on the density is imposed \cite{NilesWeed2019Minimax}. Removing this assumption makes the problem strictly harder and lies outside the scope of the present paper. In particular, these assumptions are not induced by privacy, but are the standard structural conditions already used in prior work on smooth OT map estimation.

\paragraph{Comparison to prior work.}
A direct comparison with \cite{lalanne2025PrivateOTMaps} requires some care, since that work assumes smoothness $\alpha$ of the transport map itself, whereas we assume smoothness $s$ of the underlying densities. By Caffarelli-type regularity theory, one may heuristically relate these quantities through $\alpha \approx s+1$. Under this correspondence, our estimator achieves substantially faster rates than those of \cite{lalanne2025PrivateOTMaps}, namely
$
n^{-1}
+
n^{-\frac{2\alpha}{2\alpha-2+d}}
+
(n\epsilon)^{-\frac{2\alpha}{2\alpha+d}}
$.
A theorem-level comparison, however, appears difficult at present. Indeed, the available Caffarelli-type regularity results are generally not uniform, and our domain has a non-smooth boundary; see, for instance, the discussion in \cite{Manole2024Plugin}. For this reason, and as is common in the smooth OT literature, we only invoke the usual heuristic correspondence between density smoothness and transport-map smoothness rather than claim a formal equivalence.

\paragraph{The special case $d=1$.}

When $d=1$, the rate achieved by the previous estimator deteriorates. At present, we do not know whether this reflects a genuine limitation of the method in one dimension or a suboptimality of our analysis.
Fortunately, the one-dimensional setting admits a different approach based on private quantile estimation. Indeed, under mild assumptions, the optimal transport map between $f_X$ and $f_Y$ is given by
$
T(\cdot) = F_{f_Y}^{-1}(F_{f_X}(\cdot)),
$
where $F_{f_X}$ is the cumulative distribution function of $f_X$ and $F_{f_Y}^{-1}$ is the quantile function of $f_Y$.
Fix a tuning parameter $m \geq 1$, and define
$
\mathbf{p} := (1/m, 2/m, \dots, (m-1)/m).
$
Suppose that we are given estimators $\mathbf{q}_X$ and $\mathbf{q}_Y$, both of length $m-1$, for the quantiles of orders $\mathbf{p}$ of $f_X$ and $f_Y$, respectively. We then define the estimator
\begin{equation}
    \hat{T}(x) =
    \begin{cases}
        \mathbf{q}_Y[1] & \text{if } x \in [0, \mathbf{q}_X[1]], \\
        \mathbf{q}_Y[2] & \text{if } x \in (\mathbf{q}_X[1], \mathbf{q}_X[2]], \\
        \dots \\
        \mathbf{q}_Y[m-1] & \text{if } x \in (\mathbf{q}_X[m-2], \mathbf{q}_X[m-1]], \\
        1 & \text{if } x \in (\mathbf{q}_X[m-1], 1].
    \end{cases}
\end{equation}
By construction, if $\mathbf{q}_X$ and $\mathbf{q}_Y$ are obtained through an $\epsilon$-private mechanism, then $\hat{T}$ satisfies the same privacy guarantee by post-processing.

For example, by combining this construction with the private quantile mechanism of \cite{kaplan2022differentially} and tuning $m$ appropriately, we obtain the following result.

\begin{theorem}
    \label{th:one_d_rate} 
    If there exist $0 < a < b$ such that $a \leq f_X, f_Y \leq b$ on $\Omega$, then there exist a $\epsilon$-central DP estimator $\hat{T}$ such that, if asymptotically $\epsilon = \Omega(n^{-1+\gamma})$ for some $\gamma > 0$, then this mechanism has error
    $
    \E*{\| \hat{T}_{} - T \|^2_{L^2({P_X})}}
    \lesssim \textrm{PolyLog}(n, \epsilon) \times \max\paren*{\frac{1}{n}, \frac{1}{n^2 \epsilon^2}} \;.
$
\end{theorem}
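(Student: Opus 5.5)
We use the quantile plug-in estimator $\hat{T}$ defined above, with $\mathbf{q}_X$ and $\mathbf{q}_Y$ produced by the private multiple-quantile mechanism of \cite{kaplan2022differentially}. Each mechanism receives budget $\epsilon$. Under our neighboring relation only one of the two samples changes, so the pair is $\epsilon$-central DP, and $\hat{T}$ is private by post-processing. For the utility analysis we split the error into a deterministic discretization term and a stochastic estimation term, and tune $m$ to balance them.

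\textbf{Deterministic part.} Since $a \leq f_X, f_Y \leq b$, both $F_{f_X}$ and $F_{f_Y}^{-1}$ are bi-Lipschitz, with constants depending only on $a,b$. Let $x_k := F_{f_X}^{-1}(k/m)$ and $y_k := F_{f_Y}^{-1}(k/m)$ be the true quantiles, and let $T_m$ be the step map obtained by plugging these in. On $(x_{k-1},x_k]$ the true map $T$ takes values in $[y_{k-1},y_k]$, an interval of length at most $1/(am)$. Each cell has $P_X$-mass $1/m$, so $\|T_m - T\|_{L^2(P_X)}^2 \lesssim m^{-2}$.

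\textbf{Stochastic part.} Suppose $\max_k |\mathbf{q}_X[k]-x_k| \le \eta$ and $\max_k |\mathbf{q}_Y[k]-y_k|\le \eta$. Then $\hat{T}$ and $T_m$ differ in two ways.
\begin{itemize}
\item Values: on each cell the values are off by at most $\eta$, which contributes $\eta^2$.
\item Breakpoints: the breakpoints are shifted by at most $\eta$. On the mismatched region $\hat T - T_m$ is at most a jump $y_{k+1}-y_{k}\lesssim 1/m$ plus $\eta$, provided $\eta \lesssim 1/m$. This region has $P_X$-mass $\lesssim b\, m\,\eta$, so it contributes $\lesssim m\eta(1/m+\eta)^2 \lesssim \eta/m + m\eta^3$.
\end{itemize}
A cruder bound uses that $\hat{T}$ is monotone with values in $[0,1]$. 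I would instead sharpen the breakpoint term by bounding the mismatch cell by cell with the actual quantile errors $|\mathbf{q}_X[k]-x_k|$. This gives $\frac{1}{m}\sum_k|\mathbf{q}_X[k]-x_k|\cdot m \cdot m^{-2}$, i.e.\ an $L^1$-type average of order $\E{\eta}/m$, which is small enough. On the low-probability event that the quantile bounds fail, we use $\|\hat T - T\|_\infty\le 1$.

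\textbf{Quantile accuracy (main obstacle).} We need a high-probability bound on $\eta$. The non-private empirical quantile error is $\lesssim \sqrt{\log(m)/n}$ by DKW and the lower bound $a$ on the densities. For the private part, \cite{kaplan2022differentially} guarantees that the released points have rank error $\lesssim \log^{O(1)}(m\,|\text{domain}|)/\epsilon$ relative to the target ranks. Here we discretize $[0,1]$ to a grid of size $\mathrm{poly}(n)$, adding error $1/\mathrm{poly}(n)$. Converting rank error to quantile error via the density lower bound gives $\eta \lesssim \sqrt{\log m/n} + \mathrm{PolyLog}/(n\epsilon)$ with probability $1-1/\mathrm{poly}(n)$. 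The assumption $\epsilon=\Omega(n^{-1+\gamma})$ ensures $\eta\to 0$ polynomially. In particular $\eta \lesssim 1/m$ is possible with $m$ polynomial in $n$, and $\log m$ remains polylogarithmic. The delicate points are matching the exact form of the Kaplan et al.\ guarantee (rank vs.\ value, failure probability) and handling the breakpoint term so that the bound is $\eta^2$ rather than $\eta/m$. I would use the $L^1$ averaging above and, if needed, note that $\eta/m\le \eta^2+m^{-2}$.

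\textbf{Tuning.} Combining the pieces gives $\E{\|\hat{T}-T\|^2_{L^2(P_X)}} \lesssim m^{-2} + \eta^2 + \eta/m$, with $\eta^2 \lesssim \mathrm{PolyLog}\cdot(1/n + 1/(n\epsilon)^2)$. Choosing $m \asymp \min(\sqrt{n}, n\epsilon)$ makes $m^{-2}$, $\eta^2$ and $\eta/m$ all of the claimed order $\mathrm{PolyLog}(n,\epsilon)\max(1/n, 1/(n^2\epsilon^2))$.
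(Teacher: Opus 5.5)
Your proposal follows the paper's proof closely. It uses the same quantile plug-in estimator and the same comparison with the step map built from the true quantiles, giving bias $\lesssim m^{-2}$. On the event that all quantile errors are at most $\eta$, you get the same bound $\eta^2+\eta/m+m\eta^3$ as the paper's $\delta^2+\delta/m+m\delta^3$, and otherwise you use the trivial bound by the failure probability. The one real difference is how you obtain quantile accuracy. You derive it from the worst-case rank guarantee of \cite{kaplan2022differentially}, after discretizing $[0,1]$, together with DKW and the density lower bound $a$. The paper instead invokes Theorem~3.5 of \cite{lalanne2023private}, which already gives the high-probability value-error guarantee for that mechanism. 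Your route is more self-contained; the paper's avoids the discretization and rank-to-value bookkeeping. Also, the Kaplan et al.\ mechanism is private for add/remove neighbors, so under the replacement relation you should run it at budget $\epsilon/2$, as the paper remarks.

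There is one step that fails as written. Your breakpoint bound needs $\eta\lesssim 1/m$. Your final tuning $m\asymp\min(\sqrt n,n\epsilon)$, combined with $\eta\asymp \mathrm{PolyLog}/\min(\sqrt n,n\epsilon)$, gives $m\eta\asymp\mathrm{PolyLog}\to\infty$, so that proviso is violated. The paper avoids this by taking $m$ smaller by a polylogarithmic factor, $m\asymp\min(\sqrt n,n\epsilon)/\log(\min(\sqrt n,n\epsilon))^{2M}$, so that $m\delta$ is small. This only inflates the bias $m^{-2}$ by a polylog, which the theorem allows.

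Alternatively, you can drop the proviso entirely. Sort the released quantiles, which is post-processing. Then a point lying in cell $k$ of $T_m$ and cell $j$ of $\hat T$ satisfies $|j-k|\le 1+bm\eta$, because consecutive true quantiles are at least $1/(bm)$ apart. Hence $|\hat T-T_m|\le \eta+(1+bm\eta)/(am)\lesssim \eta+1/m$ pointwise, and $\|\hat T-T\|^2_{L^2(P_X)}\lesssim \eta^2+m^{-2}$ for any $m$. Either fix closes the argument. The $L^1$ sharpening you sketch is unnecessary: it yields the same order $\eta/m$, and $\eta/m\le\eta^2+m^{-2}$ anyway.
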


\begin{proof}
    See \Cref{proof_of_th:one_d_rate}
\end{proof}

\paragraph{On the assumptions.}
The main additional assumption compared to the previous setting is that $\epsilon = \Omega(n^{-1+\gamma})$. We believe this condition is an artifact of our analysis rather than a genuine limitation of the method. Moreover, it is not especially restrictive: by the group privacy property of differential privacy \cite{dwork2014algorithmic}, one already expects the signal to be essentially lost when $\epsilon = O(1/n)$. Thus, our condition simply requires staying at a polynomial distance from this degenerate regime.

\section{Optimality and Broader Discussion}
\label{sec:discussion}

The first lower bound for this problem was established in \cite{lalanne2025PrivateOTMaps}. As discussed above, however, that work assumes smoothness of the transport map itself, whereas our analysis is carried out under smoothness assumptions on the source and target densities. Although these two viewpoints are heuristically related, no formal equivalence is currently available. We therefore state below a lower bound tailored to the hypothesis class considered in this paper. In addition, our result also covers the local-DP setting, which was not addressed in prior work.

\begin{theorem}
    \label{th:lower_bound} For a fixed set of constants $s, M, \gamma, L_1, L_2$
     we denote by $\mathcal{M}$ the set of pairs of distributions that satisfy \Cref{ass:regularity_transport_problem} for these constants. If $\epsilon$ is small enough, we asymptotically have
     $
           \inf_{\hat{T}} \sup_{(P_X, P_Y) \in \mathcal{M}}  \E*{\| \hat{T} - T \|_{L^2(P_X)}^2}    
       \gtrsim   n^{-1} + n^{- \frac{2 (s+1)}{2 s + d}} + \pi_{n, \epsilon}^{- \frac{2 (s+1)}{ s + d}} \;,
     $
     where $\hat{T}$ is taken among the set of $\epsilon$-DP mechanisms (central or local with the adjusted value for $\pi_{n, \epsilon}$) and $T$ is the optimal transport map between $P_X$ and $P_Y$. 
\end{theorem}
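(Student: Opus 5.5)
The lower bound has three terms. Since a maximum of three quantities is comparable to their sum, I will prove each one separately. The first two terms are non-private. They follow from the known minimax lower bounds for smooth OT map estimation under density smoothness: the $n^{-1}$ parametric term and the $n^{-\frac{2(s+1)}{2s+d}}$ term of \cite{hutter2021minimax,Manole2024Plugin}. Their constructions fix $P_X$ as the uniform density on $\Omega$ and take $P_Y = \nabla\phi_\omega \# P_X$ with $\phi_\omega = \frac{\|x\|^2}{2} + \delta\sum_k \omega_k h^{s+2}\psi((x-x_k)/h)$. An $\epsilon$-DP estimator is in particular an estimator, so these bounds transfer directly. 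I only need to check that the perturbed pairs lie in $\mathcal{M}$ for the fixed constants $s, M, \gamma, L_1, L_2$. This follows from the change-of-variables formula $f_Y(\nabla\phi(x))\det\nabla^2\phi(x) = f_X(x)$, which gives $f_Y$ Hölder smoothness $s$ with norm of order $\delta$.

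For the privacy term, I will use the same bump family with $m \asymp h^{-d}$ disjoint bumps indexed by $\omega \in \{0,1\}^m$. I will keep $P_X$ uniform and fixed, so that only the $Y$-sample carries information. The potential perturbation has amplitude $h^{s+2}$, so $f_Y$ is perturbed by roughly $h^{s}\,\partial^2\psi$ on each cube. The map $T_\omega = \nabla\phi_\omega$ then differs by about $h^{s+1}$ on each cube. A Hamming separation therefore holds:
\[
\|T_\omega - T_{\omega'}\|_{L^2(P_X)}^2 \gtrsim h^{2(s+1)}\, h^{d}\, \ham(\omega,\omega').
\]
Because the bumps have disjoint supports and each $\partial^2\psi$ integrates to zero, the densities $f_{Y,\omega}$ remain probability densities bounded between $\gamma^{-1}$ and $\gamma$. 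Moreover, $\nabla^2\phi_\omega$ stays between $L_1$ and $L_2$ for $h$ small, and $\nabla\phi_\omega(\Omega)\subseteq\Omega$ because each bump is compactly supported inside the interior of its cube.

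I then apply a private Assouad lemma. In the central case I use the coupling version, as in Acharya--Sun--Zhang: neighbouring $\omega,\omega'$ (differing in one coordinate) give $\tv(f_{Y,\omega}, f_{Y,\omega'}) \asymp h^{s}h^{d}$. An optimal coupling of the two $n$-samples then differs in about $n h^{s+d}$ coordinates in expectation. The per-coordinate testing error therefore stays bounded below once $n\epsilon h^{s+d} \lesssim 1$. In the local case I use the Duchi--Jordan--Wainwright bound for sequentially interactive channels. For a coordinatewise mixture, the summed squared information is controlled by $n(e^\epsilon-1)^2 \sum_k \tv^2$, and the required condition becomes $n\epsilon^2 h^{2(s+d)} \lesssim 1$. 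That is exactly $\pi_{n,\epsilon} h^{s+d} \lesssim 1$ with $\pi_{n,\epsilon}=\sqrt n\epsilon$. This step needs $\epsilon$ small enough that $e^\epsilon-1\asymp\epsilon$, which matches the hypothesis of the statement. Choosing $h \asymp \pi_{n,\epsilon}^{-1/(s+d)}$ makes Assouad give
\[
m\, h^{2(s+1)+d} \asymp h^{2(s+1)} = \pi_{n,\epsilon}^{-\frac{2(s+1)}{s+d}}.
\]

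The main obstacle is the local-DP case. The naive strong data processing bound pays a factor $m$ in the KL term, which would lose a dimension-dependent factor in the rate. The fix is the Assouad-type bound for locally private channels, which exploits the disjoint supports of the bumps so that each unit of privacy budget informs essentially one coordinate. A secondary difficulty is checking that the $L^2(P_X)$ error of any estimator projects onto each coordinate $\omega_k$. This is handled by restricting $\hat T$ to each cube and taking the nearest of the two candidate local maps, using the triangle inequality.
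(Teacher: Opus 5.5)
Your proposal is correct and shares the paper's architecture. It uses the same bump packing, with $P_X$ uniform and $P_{Y,\theta}=\nabla\phi_\theta\#P_X$, and the same Hamming separation $h^{2(s+1)+d}$. Membership in $\mathcal M$ is checked through the change-of-variables formula; the paper does this in detail, via Fa\`a di Bruno and H\"older composition/inversion lemmas, to obtain a $C^s$ bound on $f_{Y,\theta}$ uniform in $h$ and $\theta$. The non-private terms are inherited from \cite{hutter2021minimax}, and the central case uses a coupling-based private Assouad lemma, which the paper imports from \cite{lalanne2025PrivateOTMaps}.

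The only genuine difference is the local-DP step. You use the summed private Assouad bound of Duchi--Jordan--Wainwright, controlling $\sum_k \tv(P_{+k},P_{-k})^2$ for the single-observation mixtures through a supremum over bounded test functions and the disjointness of the bumps. The paper instead writes the mixtures $\mathbb P_{\theta_{\pm i}}$ as averages over the other coordinates. It then applies Le Cam and Pinsker to each pair $(\theta,\theta')$ with $\ham(\theta,\theta')=1$, and bounds the KL by the two-point contraction $4(e^\epsilon-1)^2 n\,\tv(P_{Y,\theta},P_{Y,\theta'})^2\lesssim n\epsilon^2 h^{2(s+d)}$ (Corollary 1 of \cite{duchi2013local}). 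Both routes yield the same condition $\sqrt n\,\epsilon\,h^{s+d}\lesssim 1$.

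Consequently, the ``main obstacle'' you identify is not actually present in this problem. The pairwise strong data-processing bound loses no factor $m$ inside Assouad, because only Hamming neighbours are ever compared. Moreover, with disjoint bumps, $\sup_{\|\gamma\|_\infty\le 1}\sum_k\bigl(\int\gamma\, d(P_{+k}-P_{-k})\bigr)^2$ equals $\sum_k (2\tv(P_{+k},P_{-k}))^2$ anyway, so the two bounds coincide. The factor $m$ you worry about appears only if one compares far-apart hypotheses, as in a naive Fano argument. The paper's route is therefore more elementary, needing only the two-point contraction for sequentially interactive channels. Yours is the sharper tool in general, but it only pays off for overlapping perturbations, where the pairwise bound genuinely loses a factor of the number of coordinates.
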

\begin{proof}
    See \Cref{proof_of_th:lower_bound}
\end{proof}

In particular, in the central-DP setting, the rate highlighted by this lower bound matches that of \cite{lalanne2025PrivateOTMaps} under the heuristic correspondence discussed earlier.

\paragraph{General optimality.}
The first main consequence of \Cref{th:lower_bound} is that, under both central and local differential privacy, the main mechanism of \Cref{sec:transport_map_estimation} is minimax optimal as soon as $d \geq 3$, and near minimax optimal up to polylogarithmic factors when $d=2$. Moreover, the reduction to quantile estimation introduced in \Cref{sec:transport_map_estimation} is near minimax optimal in dimension $1$ under central differential privacy. In all of these regimes, our results provide, to the best of our knowledge, the first (nearly) matching upper and lower bounds for the problem.

\paragraph{The local-DP case when $d=1$.}
Under local differential privacy in dimension $1$, our results do not allow us to determine whether the upper bound or the lower bound is tight, which remains a limitation of the present work. If we were to speculate, the fact that the lower bound appears robust across privacy models, whereas the upper bound requires a separate and more delicate treatment in dimension $1$, suggests that the gap may come from the upper bound rather than from the lower bound. On the positive side, the central-DP result in dimension $1$ relies on recent advances in private estimation of many quantiles \cite{gillenwater2021differentially,kaplan2022differentially}, and our reduction is modular enough that any future progress on the corresponding problem under local differential privacy could be translated directly into an improved upper bound for private OT map estimation.

\paragraph{The cost of privacy.}
For simplicity, we focus on central DP, but the discussion can be adapted to local DP for $d \geq 2$ by replacing $n\epsilon$ with $\pi_{n,\epsilon} = \sqrt{n}\epsilon$. Our bounds show that the cost of privacy appears through the additional term
$
(n\epsilon)^{- \frac{2(s+1)}{s+d}},
$
which is added to the usual non-private rate
$
n^{-1} + n^{- \frac{2(s+1)}{2s+d}}.
$
In this sense, privacy is \emph{free} whenever the privacy term is asymptotically no larger than the dominant non-private term, and it has to be \emph{paid} once it becomes the leading contribution to the error. For $d \geq 3$, this boundary is obtained by equating the two terms, which yields the threshold
$
\epsilon \asymp n^{- \frac{s}{2s+d}}.
$
Thus, if $\epsilon \gg n^{- \frac{s}{2s+d}}$, privacy does not affect the minimax rate, whereas if $\epsilon \ll n^{- \frac{s}{2s+d}}$, the rate becomes privacy-dominated. In dimension $d=2$, the same interpretation holds up to polylogarithmic factors, with threshold $\epsilon \asymp n^{- \frac{s}{2(s+1)}}$.

This also highlights the role of smoothness. As $s$ increases, the privacy term decays faster and the threshold between the privacy-free and privacy-limited regimes shifts toward smaller values of $\epsilon$. In other words, smoother densities allow one to sustain stronger privacy while still preserving the non-private statistical rate. Conversely, when smoothness is low, the privacy-free regime becomes much smaller, and the statistical cost of privacy is felt earlier.

\paragraph{The dimension jump.}
A striking consequence of our rates is the following dimension-dependent transition. In the non-private setting, the estimation problem only departs from the parametric rate once $d \geq 3$. Under central differential privacy, however, the corresponding private parametric rate
$
\frac{1}{n} + \frac{1}{n^2 \epsilon^2}
$
already ceases to hold as soon as $d \geq 2$. In other words, privacy makes the effect of dimension appear one dimension earlier, a phenomenon which, to the best of our knowledge, has not been documented for related estimation problems. This observation further supports the view that, under privacy constraints, it is already crucial in ambient dimension $2$ to use procedures that explicitly exploit smoothness, rather than relying on mechanisms that ignore it.

\section{Experiments}\label{sec:exp}

This section explains how to turn the estimator of \Cref{sec:transport_map_estimation} into an implementable procedure and provides numerical evidence of its performance.

\paragraph{Algorithmic considerations.}

The estimator of \Cref{sec:transport_map_estimation} relies on two steps: first, it privately constructs surrogate densities for the two underlying distributions; second, it computes the OT map between these surrogate densities. The first step is directly implementable, since private querying mechanisms were introduced in \Cref{sec:density_estimation_wasserstein}. Moreover, any subsequent post-processing of these privatized objects preserves their differential privacy guarantees.
The main algorithmic difficulty lies in the second step. Although the OT map between the surrogate densities is well defined at the population level, computing it directly from the analytic form of the private density estimators is not always straightforward in practice. To address this issue, we adopt a resampling approach.
More precisely, we first draw an arbitrary number of synthetic samples from each surrogate density, in our implementation using rejection sampling. By post-processing, these synthetic datasets remain differentially private with respect to the original data. We then apply \emph{any off-the-shelf non-private OT map estimator} to the resulting synthetic samples in order to approximate the target transport map.
In all of our experiments, we use what is, to the best of our knowledge, the empirically most efficient approach, namely the entropically regularized barycentric projector of \cite{pooladian2021entropic} (see Eq.~(14) therein for a formal definition). Additional implementation details are provided in \Cref{appendix:add_details}.

\paragraph{Datasets.} To make the error fully observable, we require access to both the true transport map and the underlying source and target distributions, so that we can measure suboptimality without approximation bias from the numerical OT solver. We therefore work with synthetic benchmarks for which sampling is straightforward and
OT maps are easily computed. The precise specification of these source–target pairs
is given in \Cref{appendix:add_details}.

\paragraph{Baselines.}
Since the method of \cite{lalanne2025PrivateOTMaps} is computationally intractable, we cannot include it as an experimental baseline. Instead, we compare our method against two non-private baselines in order to assess the empirical cost of privacy: the entropically regularized transport map estimator of \cite{pooladian2021entropic} (\textbf{EOT baseline}), and the simple barycentric projection empirical OT map (\textbf{OT baseline}). 
For the latter baseline, the estimator is only defined on the support of the training samples. As a consequence, we can report only its \emph{training} error. This gives it a slight advantage over the other methods, including ours, and this should be kept in mind when interpreting the results.

\paragraph{Results.}
Full experimental results are given in \Cref{sec:add_xp_details}; in the main text, we only report \Cref{fig:combined}, which already captures the main behavior of our estimator. Two conclusions stand out. First, the problem is statistically difficult: at small sample sizes, privacy induces a substantial utility loss across all settings. Second, as the sample size grows, smoothness becomes beneficial: methods exploiting smoothness exhibit a steeper slope than the empirical OT baseline, indicating faster convergence, as predicted by the theory. In the $d=2$ case of \Cref{fig:combined}, we even observe that private smooth estimators can outperform the naive non-private baseline.

\paragraph{Limitations.}
The main empirical bottleneck of our method is the resampling step. While we do not currently see how to avoid it, improving this component appears to be an important direction for future work.

\begin{figure}[h]
    \centering 
\includegraphics[width=\textwidth]{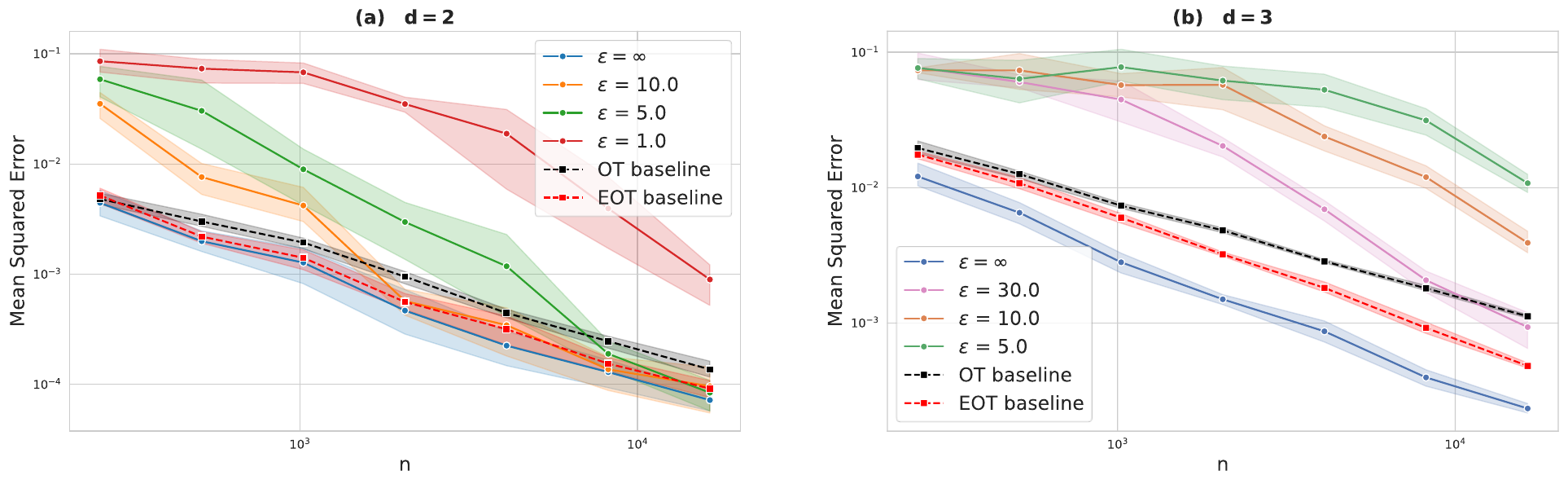}
\caption{\textbf{unif-unif, N=2, J=2, for d=2 (a) and d=3 (b).}  Mean squared error vs. sample size $n$ for varying central privacy budgets $\epsilon$ (log-log scale), averaged over 5 independent runs. Solid lines represent the $\epsilon$-DP wavelet estimator; black and red dashed lines denote the OT and EOT baselines, respectively.
}
\label{fig:combined}
\end{figure}

\vspace{-0.5cm}
\section{Conclusion}

In this work, we studied the private estimation of smooth optimal transport maps and proposed a plug-in approach based on private density estimation. Under standard assumptions, this yields near-minimax guarantees for $d \ge 2$ in both the central and local privacy models, together with a separate one-dimensional central-DP result based on private quantiles and matching lower bounds. 

A notable structural feature of the problem is its dimension-dependent transition: in the non-private setting, smooth OT-map estimation remains essentially parametric up to $d=2$ and loses this behavior from $d \ge 3$, whereas under differential privacy it already departs from the usual private-parametric benchmark $1/n + 1/\pi_{n,\epsilon}^2$ in dimension $d=2$.

Beyond the statistical guarantees, our approach also suggests a concrete computational route: once private density estimates are released, downstream OT-map computation is protected by the post-processing property of differential privacy. Our current implementation should be viewed as a first proof of concept, and an important direction for future work is to design more efficient procedures that build on similar private surrogates while avoiding the current resampling bottleneck.

\section*{Acknowledgements}

This paper has been partially funded by the Agence Nationale de la Recherche under grants
ANR-23-CE23-0029 Regul-IA and ANR-24-CE23-1529 MAD.
The authors also acknowledge the support of the AI Cluster ANITI (ANR-23-IACL-0002).
The second author was supported by MCIN/AEI/10.13039/501100011033/FEDER under Grant Agreement
Number PID2021-128314NB-I00.  
As part of the Madlearning project, this work received funding from the French government, managed by the National Research Agency (ANR) under the France 2030 program, with reference number ANR-25-PEIA-0002.

\newpage

\bibliography{biblio}

@inproceedings{gordaliza2019obtaining,
  title={Obtaining fairness using optimal transport theory},
  author={Gordaliza, Paula and Del Barrio, Eustasio and Fabrice, Gamboa and Loubes, Jean-Michel},
  booktitle={International conference on machine learning},
  pages={2357--2365},
  year={2019},
  organization={PMLR}
}

@inproceedings{routgenerative,
  title={Generative Modeling with Optimal Transport Maps},
  author={Rout, Litu and Korotin, Alexander and Burnaev, Evgeny},
  booktitle={International Conference on Learning Representations},
  year={2022}
}

@article{Manole2024Plugin,
  title={Plugin estimation of smooth optimal transport maps},
  author={Tudor Manole and Sivaraman Balakrishnan and Jonathan Niles-Weed and Larry A. Wasserman},
  journal={The Annals of Statistics},
  year={2024},
  url={https://arxiv.org/abs/2107.12364}
}

@article{NilesWeed2019Minimax,
  title={Minimax estimation of smooth densities in Wasserstein distance},
  author={Jonathan Niles-Weed and Quentin Berthet},
  journal={The Annals of Statistics},
  year={2022},
  url={https://arxiv.org/abs/1902.01778}
}

@inproceedings{del2024central,
  title={Central limit theorems for general transportation costs},
  author={del Barrio, Eustasio and Gonz{\'a}lez-Sanz, Alberto and Loubes, Jean-Michel},
  booktitle={Annales de l'Institut Henri Poincare (B) Probabilites et statistiques},
  volume={60},
  number={2},
  pages={847--873},
  year={2024},
  organization={Institut Henri Poincar{\'e}}
}

@article{del2024centralb,
  title={Central limit theorems for semi-discrete Wasserstein distances},
  author={Del Barrio, Eustasio and Gonz{\'a}lez Sanz, Alberto and Loubes, Jean-Michel},
  journal={Bernoulli},
  volume={30},
  number={1},
  pages={554--580},
  year={2024},
  publisher={Bernoulli Society for Mathematical Statistics and Probability}
}

@inproceedings{dwork2006calibrating,
  author       = {Cynthia Dwork and
                  Frank McSherry and
                  Kobbi Nissim and
                  Adam D. Smith},
  editor       = {Shai Halevi and
                  Tal Rabin},
  title        = {Calibrating Noise to Sensitivity in Private Data Analysis},
  booktitle    = {Theory of Cryptography, Third Theory of Cryptography Conference, {TCC}
                  2006, New York, NY, USA, March 4-7, 2006, Proceedings},
  series       = {Lecture Notes in Computer Science},
  volume       = {3876},
  pages        = {265--284},
  publisher    = {Springer},
  year         = {2006},
  url          = {https://doi.org/10.1007/11681878\_14},
  doi          = {10.1007/11681878\_14},
  bibsource    = {dblp computer science bibliography, https://dblp.org}
}

@article{dwork2014algorithmic,
  author       = {Cynthia Dwork and
                  Aaron Roth},
  title        = {The Algorithmic Foundations of Differential Privacy},
  journal      = {Found. Trends Theor. Comput. Sci.},
  volume       = {9},
  number       = {3-4},
  pages        = {211--407},
  year         = {2014},
  url          = {https://doi.org/10.1561/0400000042},
  doi          = {10.1561/0400000042},
  bibsource    = {dblp computer science bibliography, https://dblp.org}
}

@inproceedings{backstrom2007wherefore,
  author       = {Lars Backstrom and
                  Cynthia Dwork and
                  Jon M. Kleinberg},
  editor       = {Carey L. Williamson and
                  Mary Ellen Zurko and
                  Peter F. Patel{-}Schneider and
                  Prashant J. Shenoy},
  title        = {Wherefore art thou r3579x?: anonymized social networks, hidden patterns,
                  and structural steganography},
  booktitle    = {Proceedings of the 16th International Conference on World Wide Web,
                  {WWW} 2007, Banff, Alberta, Canada, May 8-12, 2007},
  pages        = {181--190},
  publisher    = {{ACM}},
  year         = {2007},
  url          = {https://doi.org/10.1145/1242572.1242598},
  doi          = {10.1145/1242572.1242598},
  bibsource    = {dblp computer science bibliography, https://dblp.org}
}

@inproceedings{gillenwater2021differentially,
  author       = {Jennifer Gillenwater and
                  Matthew Joseph and
                  Alex Kulesza},
  editor       = {Marina Meila and
                  Tong Zhang},
  title        = {Differentially Private Quantiles},
  booktitle    = {Proceedings of the 38th International Conference on Machine Learning,
                  {ICML} 2021, 18-24 July 2021, Virtual Event},
  series       = {Proceedings of Machine Learning Research},
  volume       = {139},
  pages        = {3713--3722},
  publisher    = {{PMLR}},
  year         = {2021},
  url          = {http://proceedings.mlr.press/v139/gillenwater21a.html},
  bibsource    = {dblp computer science bibliography, https://dblp.org}
}

@inproceedings{fredrikson2015model,
  author       = {Matt Fredrikson and
                  Somesh Jha and
                  Thomas Ristenpart},
  editor       = {Indrajit Ray and
                  Ninghui Li and
                  Christopher Kruegel},
  title        = {Model Inversion Attacks that Exploit Confidence Information and Basic
                  Countermeasures},
  booktitle    = {Proceedings of the 22nd {ACM} {SIGSAC} Conference on Computer and
                  Communications Security, Denver, CO, USA, October 12-16, 2015},
  pages        = {1322--1333},
  publisher    = {{ACM}},
  year         = {2015},
  url          = {https://doi.org/10.1145/2810103.2813677},
  doi          = {10.1145/2810103.2813677},
  bibsource    = {dblp computer science bibliography, https://dblp.org}
}

@article{wasserman2010statistical,
author = {Larry A. Wasserman and Shuheng Zhou},
title = {A Statistical Framework for Differential Privacy},
journal = {Journal of the American Statistical Association},
volume = {105},
number = {489},
pages = {375-389},
year  = {2010},
publisher = {Taylor & Francis},
doi = {10.1198/jasa.2009.tm08651},
URL = { 
        https://doi.org/10.1198/jasa.2009.tm08651
},
eprint = {     
        https://doi.org/10.1198/jasa.2009.tm08651
}
}

@article{kroll2021density,
author = {Martin Kroll},
title = {{On density estimation at a fixed point under local differential privacy}},
volume = {15},
journal = {Electronic Journal of Statistics},
number = {1},
publisher = {Institute of Mathematical Statistics and Bernoulli Society},
pages = {1783 -- 1813},
year = {2021},
doi = {10.1214/21-EJS1830},
URL = {https://doi.org/10.1214/21-EJS1830}
}

@article{duchi2018minimax,
  author       = {John C. Duchi and
                  Martin J. Wainwright and
                  Michael I. Jordan},
  title        = {Minimax Optimal Procedures for Locally Private Estimation},
  journal      = {CoRR},
  volume       = {abs/1604.02390},
  year         = {2016},
  url          = {http://arxiv.org/abs/1604.02390},
  eprinttype    = {arXiv},
  eprint       = {1604.02390},
  bibsource    = {dblp computer science bibliography, https://dblp.org}
}

@inproceedings{dinur2003revealing,
  author       = {Irit Dinur and
                  Kobbi Nissim},
  editor       = {Frank Neven and
                  Catriel Beeri and
                  Tova Milo},
  title        = {Revealing information while preserving privacy},
  booktitle    = {Proceedings of the Twenty-Second {ACM} {SIGACT-SIGMOD-SIGART} Symposium
                  on Principles of Database Systems, June 9-12, 2003, San Diego, CA,
                  {USA}},
  pages        = {202--210},
  publisher    = {{ACM}},
  year         = {2003},
  url          = {https://doi.org/10.1145/773153.773173},
  doi          = {10.1145/773153.773173},
  bibsource    = {dblp computer science bibliography, https://dblp.org}
}

@article{narayanan2006break,
  author       = {Arvind Narayanan and
                  Vitaly Shmatikov},
  title        = {How To Break Anonymity of the Netflix Prize Dataset},
  journal      = {CoRR},
  volume       = {abs/cs/0610105},
  year         = {2006},
  url          = {http://arxiv.org/abs/cs/0610105},
  eprinttype    = {arXiv},
  eprint       = {cs/0610105},
  bibsource    = {dblp computer science bibliography, https://dblp.org}
}

@article{homer2008resolving,
  title={Resolving individuals contributing trace amounts of DNA to highly complex mixtures using high-density SNP genotyping microarrays},
  author={Homer, Nils and Szelinger, Szabolcs and Redman, Margot and Duggan, David and Tembe, Waibhav and Muehling, Jill and Pearson, John V and Stephan, Dietrich A and Nelson, Stanley F and Craig, David W},
  journal={PLoS Genet},
  volume={4},
  number={8},
  pages={e1000167},
  year={2008},
  publisher={Public Library of Science}
}

@article{loukides2010disclosure,
  author       = {Grigorios Loukides and
                  Joshua C. Denny and
                  Bradley A. Malin},
  title        = {The disclosure of diagnosis codes can breach research participants'
                  privacy},
  journal      = {J. Am. Medical Informatics Assoc.},
  volume       = {17},
  number       = {3},
  pages        = {322--327},
  year         = {2010},
  url          = {https://doi.org/10.1136/jamia.2009.002725},
  doi          = {10.1136/jamia.2009.002725},
  bibsource    = {dblp computer science bibliography, https://dblp.org}
}

@inproceedings{narayanan2008robust,
  author       = {Arvind Narayanan and
                  Vitaly Shmatikov},
  title        = {Robust De-anonymization of Large Sparse Datasets},
  booktitle    = {2008 {IEEE} Symposium on Security and Privacy (S{\&}P 2008), 18-21
                  May 2008, Oakland, California, {USA}},
  pages        = {111--125},
  publisher    = {{IEEE} Computer Society},
  year         = {2008},
  url          = {https://doi.org/10.1109/SP.2008.33},
  doi          = {10.1109/SP.2008.33},
  bibsource    = {dblp computer science bibliography, https://dblp.org}
}

@article{sweeney2000simple,
  title={Simple demographics often identify people uniquely},
  author={Sweeney, Latanya},
  journal={Health (San Francisco)},
  volume={671},
  number={2000},
  pages={1--34},
  year={2000}
}

@article{wagner2018technical,
  author       = {Isabel Wagner and
                  David Eckhoff},
  title        = {Technical Privacy Metrics: {A} Systematic Survey},
  journal      = {{ACM} Comput. Surv.},
  volume       = {51},
  number       = {3},
  pages        = {57:1--57:38},
  year         = {2018},
  url          = {https://doi.org/10.1145/3168389},
  doi          = {10.1145/3168389},
  bibsource    = {dblp computer science bibliography, https://dblp.org}
}

@article{sweeney2002k,
  author       = {Latanya Sweeney},
  title        = {k-Anonymity: {A} Model for Protecting Privacy},
  journal      = {Int. J. Uncertain. Fuzziness Knowl. Based Syst.},
  volume       = {10},
  number       = {5},
  pages        = {557--570},
  year         = {2002},
  url          = {https://doi.org/10.1142/S0218488502001648},
  doi          = {10.1142/S0218488502001648},
  bibsource    = {dblp computer science bibliography, https://dblp.org}
}

@inproceedings{abowd2018us,
  title={The US Census Bureau adopts differential privacy},
  author={Abowd, John M},
  booktitle={Proceedings of the 24th ACM SIGKDD International Conference on Knowledge Discovery \& Data Mining},
  pages={2867--2867},
  year={2018}
}

@inproceedings{erlingsson2014rappor,
  author       = {{\'{U}}lfar Erlingsson and
                  Vasyl Pihur and
                  Aleksandra Korolova},
  editor       = {Gail{-}Joon Ahn and
                  Moti Yung and
                  Ninghui Li},
  title        = {{RAPPOR:} Randomized Aggregatable Privacy-Preserving Ordinal Response},
  booktitle    = {Proceedings of the 2014 {ACM} {SIGSAC} Conference on Computer and
                  Communications Security, Scottsdale, AZ, USA, November 3-7, 2014},
  pages        = {1054--1067},
  publisher    = {{ACM}},
  year         = {2014},
  url          = {https://doi.org/10.1145/2660267.2660348},
  doi          = {10.1145/2660267.2660348},
  bibsource    = {dblp computer science bibliography, https://dblp.org}
}

@article{thakurta2017learning,
  title={Learning new words},
  author={Thakurta, Abhradeep Guha and Vyrros, Andrew H and Vaishampayan, Umesh S and Kapoor, Gaurav and Freudiger, Julien and Sridhar, Vivek Rangarajan and Davidson, Doug},
  journal={Granted US Patents},
  volume={9594741},
  year={2017}
}

@inproceedings{ding2017collecting,
  author       = {Bolin Ding and
                  Janardhan Kulkarni and
                  Sergey Yekhanin},
  editor       = {Isabelle Guyon and
                  Ulrike von Luxburg and
                  Samy Bengio and
                  Hanna M. Wallach and
                  Rob Fergus and
                  S. V. N. Vishwanathan and
                  Roman Garnett},
  title        = {Collecting Telemetry Data Privately},
  booktitle    = {Advances in Neural Information Processing Systems 30: Annual Conference
                  on Neural Information Processing Systems 2017, December 4-9, 2017,
                  Long Beach, CA, {USA}},
  pages        = {3571--3580},
  year         = {2017},
  url          = {https://proceedings.neurips.cc/paper/2017/hash/253614bbac999b38b5b60cae531c4969-Abstract.html},
  bibsource    = {dblp computer science bibliography, https://dblp.org}
}

@inproceedings{duchi2013local,
  author       = {John C. Duchi and
                  Michael I. Jordan and
                  Martin J. Wainwright},
  title        = {Local privacy and statistical minimax rates},
  booktitle    = {51st Annual Allerton Conference on Communication, Control, and Computing,
                  Allerton 2013, Allerton Park {\&} Retreat Center, Monticello,
                  IL, USA, October 2-4, 2013},
  pages        = {1592},
  publisher    = {{IEEE}},
  year         = {2013},
  url          = {https://doi.org/10.1109/Allerton.2013.6736718},
  doi          = {10.1109/Allerton.2013.6736718},
  bibsource    = {dblp computer science bibliography, https://dblp.org}
}

@inproceedings{acharya2021differentially,
  author       = {Jayadev Acharya and
                  Ziteng Sun and
                  Huanyu Zhang},
  editor       = {Vitaly Feldman and
                  Katrina Ligett and
                  Sivan Sabato},
  title        = {Differentially Private {A}ssouad, {F}ano, and {L}e {C}am},
  booktitle    = {Algorithmic Learning Theory, 16-19 March 2021, Virtual Conference,
                  Worldwide},
  series       = {Proceedings of Machine Learning Research},
  volume       = {132},
  pages        = {48--78},
  publisher    = {{PMLR}},
  year         = {2021},
  url          = {http://proceedings.mlr.press/v132/acharya21a.html},
  bibsource    = {dblp computer science bibliography, https://dblp.org}
}

@article{van2014renyi,
  author       = {Tim van Erven and
                  Peter Harremo{\"{e}}s},
  title        = {R{\'{e}}nyi Divergence and Kullback-Leibler Divergence},
  journal      = {{IEEE} Trans. Inf. Theory},
  volume       = {60},
  number       = {7},
  pages        = {3797--3820},
  year         = {2014},
  url          = {https://doi.org/10.1109/TIT.2014.2320500},
  doi          = {10.1109/TIT.2014.2320500},
  bibsource    = {dblp computer science bibliography, https://dblp.org}
}

@inproceedings{karwa2017finite,
  author       = {Vishesh Karwa and
                  Salil P. Vadhan},
  editor       = {Anna R. Karlin},
  title        = {Finite Sample Differentially Private Confidence Intervals},
  booktitle    = {9th Innovations in Theoretical Computer Science Conference, {ITCS}
                  2018, January 11-14, 2018, Cambridge, MA, {USA}},
  series       = {LIPIcs},
  volume       = {94},
  pages        = {44:1--44:9},
  publisher    = {Schloss Dagstuhl - Leibniz-Zentrum f{\"{u}}r Informatik},
  year         = {2018},
  url          = {https://doi.org/10.4230/LIPIcs.ITCS.2018.44},
  doi          = {10.4230/LIPIcs.ITCS.2018.44},
  bibsource    = {dblp computer science bibliography, https://dblp.org}
}

@inproceedings{biswas2020coinpress,
  author       = {Sourav Biswas and
                  Yihe Dong and
                  Gautam Kamath and
                  Jonathan R. Ullman},
  editor       = {Hugo Larochelle and
                  Marc'Aurelio Ranzato and
                  Raia Hadsell and
                  Maria{-}Florina Balcan and
                  Hsuan{-}Tien Lin},
  title        = {CoinPress: Practical Private Mean and Covariance Estimation},
  booktitle    = {Advances in Neural Information Processing Systems 33: Annual Conference
                  on Neural Information Processing Systems 2020, NeurIPS 2020, December
                  6-12, 2020, virtual},
  year         = {2020},
  url          = {https://proceedings.neurips.cc/paper/2020/hash/a684eceee76fc522773286a895bc8436-Abstract.html},
  bibsource    = {dblp computer science bibliography, https://dblp.org}
}

@inproceedings{diakonikolas2015differentially,
  author       = {Ilias Diakonikolas and
                  Moritz Hardt and
                  Ludwig Schmidt},
  editor       = {Corinna Cortes and
                  Neil D. Lawrence and
                  Daniel D. Lee and
                  Masashi Sugiyama and
                  Roman Garnett},
  title        = {Differentially Private Learning of Structured Discrete Distributions},
  booktitle    = {Advances in Neural Information Processing Systems 28: Annual Conference
                  on Neural Information Processing Systems 2015, December 7-12, 2015,
                  Montreal, Quebec, Canada},
  pages        = {2566--2574},
  year         = {2015},
  url          = {https://proceedings.neurips.cc/paper/2015/hash/2b3bf3eee2475e03885a110e9acaab61-Abstract.html},
  bibsource    = {dblp computer science bibliography, https://dblp.org}
}

@book{santambrogio2015optimal,
year = {2016},
publisher = {Birkhäuser Cham/Springer},
title = {Optimal Transport for Applied Mathematicians},
subtitle = {Calculus of Variations, PDEs, and Modeling},
doi = {10.1007/978-3-319-20828-2},
issn = {978-3-319-36581-7},
pages = {353},
author = {Filippo Santambrogio}
}

@inproceedings{kamath2022improved,
  author       = {Gautam Kamath and
                  Xingtu Liu and
                  Huanyu Zhang},
  editor       = {Kamalika Chaudhuri and
                  Stefanie Jegelka and
                  Le Song and
                  Csaba Szepesv{\'{a}}ri and
                  Gang Niu and
                  Sivan Sabato},
  title        = {Improved Rates for Differentially Private Stochastic Convex Optimization
                  with Heavy-Tailed Data},
  booktitle    = {International Conference on Machine Learning, {ICML} 2022, 17-23 July
                  2022, Baltimore, Maryland, {USA}},
  series       = {Proceedings of Machine Learning Research},
  volume       = {162},
  pages        = {10633--10660},
  publisher    = {{PMLR}},
  year         = {2022},
  url          = {https://proceedings.mlr.press/v162/kamath22a.html},
  bibsource    = {dblp computer science bibliography, https://dblp.org}
}

@book{villani2009optimal,
  title={Optimal transport: old and new},
  author={Villani, C{\'e}dric and others},
  volume={338},
  year={2009},
  publisher={Springer},
  doi={10.1007/978-3-540-71050-9},
  isbn={978-3-540-71049-3}
}

@article{barber2014privacy,
  author       = {Rina Foygel Barber and
                  John C. Duchi},
  title        = {Privacy and Statistical Risk: Formalisms and Minimax Bounds},
  journal      = {CoRR},
  volume       = {abs/1412.4451},
  year         = {2014},
  url          = {http://arxiv.org/abs/1412.4451},
  eprinttype    = {arXiv},
  eprint       = {1412.4451},
  bibsource    = {dblp computer science bibliography, https://dblp.org}
}

@article{butucea2020local,
  author       = {Cristina Butucea and
                  Amandine Dubois and
                  Martin Kroll and
                  Adrien Saumard},
  title        = {Local differential privacy: Elbow effect in optimal density estimation
                  and adaptation over Besov ellipsoids},
  journal      = {CoRR},
  volume       = {abs/1903.01927},
  year         = {2019},
  url          = {http://arxiv.org/abs/1903.01927},
  eprinttype    = {arXiv},
  eprint       = {1903.01927},
  bibsource    = {dblp computer science bibliography, https://dblp.org}
}

@misc{schluttenhofer2022adaptive,
      title={Adaptive pointwise density estimation under local differential privacy}, 
      author={Sandra Schluttenhofer and Jan Johannes},
      year={2022},
      eprint={2206.07663},
      archivePrefix={arXiv},
      primaryClass={math.ST}
}

@article{gyorfi2023multivariate,
author = {László Györfi and Martin Kroll},
title = {Multivariate density estimation from privatised data: universal consistency and minimax rates},
journal = {Journal of Nonparametric Statistics},
volume = {0},
number = {0},
pages = {1-23},
year  = {2023},
publisher = {Taylor & Francis},
doi = {10.1080/10485252.2022.2163634},
URL = { 
        https://doi.org/10.1080/10485252.2022.2163634
},
eprint = { 
        https://doi.org/10.1080/10485252.2022.2163634
}
}

@article{berrett2021strongly,
author = {Thomas B. Berrett and L{\'a}szl{\'o} Gy{\"o}rfi and Harro Walk},
title = {{Strongly universally consistent nonparametric regression and classification with privatised data}},
volume = {15},
journal = {Electronic Journal of Statistics},
number = {1},
publisher = {Institute of Mathematical Statistics and Bernoulli Society},
pages = {2430 -- 2453},
year = {2021},
doi = {10.1214/21-EJS1845},
URL = {https://doi.org/10.1214/21-EJS1845}
}

@article{gyorfi2022rate,
  title={On rate optimal private regression under local differential privacy},
  author={Gy{\"o}rfi, L{\'a}szl{\'o} and Kroll, Martin},
  journal={arXiv preprint arXiv:2206.00114},
  year={2022}
}

@article{bun2021privatehypothesis,
  author       = {Mark Bun and
                  Gautam Kamath and
                  Thomas Steinke and
                  Zhiwei Steven Wu},
  title        = {Private Hypothesis Selection},
  journal      = {{IEEE} Trans. Inf. Theory},
  volume       = {67},
  number       = {3},
  pages        = {1981--2000},
  year         = {2021},
  url          = {https://doi.org/10.1109/TIT.2021.3049802},
  doi          = {10.1109/TIT.2021.3049802},
  bibsource    = {dblp computer science bibliography, https://dblp.org}
 }

@inproceedings{bun2019privatehypothesis,
  author       = {Mark Bun and
                  Gautam Kamath and
                  Thomas Steinke and
                  Zhiwei Steven Wu},
  editor       = {Hanna M. Wallach and
                  Hugo Larochelle and
                  Alina Beygelzimer and
                  Florence d'Alch{\'{e}}{-}Buc and
                  Emily B. Fox and
                  Roman Garnett},
  title        = {Private Hypothesis Selection},
  booktitle    = {Advances in Neural Information Processing Systems 32: Annual Conference
                  on Neural Information Processing Systems 2019, NeurIPS 2019, December
                  8-14, 2019, Vancouver, BC, Canada},
  pages        = {156--167},
  year         = {2019},
  url          = {https://proceedings.neurips.cc/paper/2019/hash/9778d5d219c5080b9a6a17bef029331c-Abstract.html},
  bibsource    = {dblp computer science bibliography, https://dblp.org}
}

@inproceedings{kamath2019highdimensional,
  author       = {Gautam Kamath and
                  Jerry Li and
                  Vikrant Singhal and
                  Jonathan R. Ullman},
  editor       = {Alina Beygelzimer and
                  Daniel Hsu},
  title        = {Privately Learning High-Dimensional Distributions},
  booktitle    = {Conference on Learning Theory, {COLT} 2019, 25-28 June 2019, Phoenix,
                  AZ, {USA}},
  series       = {Proceedings of Machine Learning Research},
  volume       = {99},
  pages        = {1853--1902},
  publisher    = {{PMLR}},
  year         = {2019},
  url          = {http://proceedings.mlr.press/v99/kamath19a.html},
  bibsource    = {dblp computer science bibliography, https://dblp.org}
}

@inproceedings{kamath2020heavytailed,
  author       = {Gautam Kamath and
                  Vikrant Singhal and
                  Jonathan R. Ullman},
  editor       = {Jacob D. Abernethy and
                  Shivani Agarwal},
  title        = {Private Mean Estimation of Heavy-Tailed Distributions},
  booktitle    = {Conference on Learning Theory, {COLT} 2020, 9-12 July 2020, Virtual
                  Event [Graz, Austria]},
  series       = {Proceedings of Machine Learning Research},
  volume       = {125},
  pages        = {2204--2235},
  publisher    = {{PMLR}},
  year         = {2020},
  url          = {http://proceedings.mlr.press/v125/kamath20a.html},
  bibsource    = {dblp computer science bibliography, https://dblp.org}
}

@inproceedings{adenali2021unbounded,
  author       = {Ishaq Aden{-}Ali and
                  Hassan Ashtiani and
                  Gautam Kamath},
  editor       = {Vitaly Feldman and
                  Katrina Ligett and
                  Sivan Sabato},
  title        = {On the Sample Complexity of Privately Learning Unbounded High-Dimensional
                  Gaussians},
  booktitle    = {Algorithmic Learning Theory, 16-19 March 2021, Virtual Conference,
                  Worldwide},
  series       = {Proceedings of Machine Learning Research},
  volume       = {132},
  pages        = {185--216},
  publisher    = {{PMLR}},
  year         = {2021},
  url          = {http://proceedings.mlr.press/v132/aden-ali21a.html},
  bibsource    = {dblp computer science bibliography, https://dblp.org}
}

@inproceedings{brown2021covariance,
  author       = {Gavin Brown and
                  Marco Gaboardi and
                  Adam D. Smith and
                  Jonathan R. Ullman and
                  Lydia Zakynthinou},
  editor       = {Marc'Aurelio Ranzato and
                  Alina Beygelzimer and
                  Yann N. Dauphin and
                  Percy Liang and
                  Jennifer Wortman Vaughan},
  title        = {Covariance-Aware Private Mean Estimation Without Private Covariance
                  Estimation},
  booktitle    = {Advances in Neural Information Processing Systems 34: Annual Conference
                  on Neural Information Processing Systems 2021, NeurIPS 2021, December
                  6-14, 2021, virtual},
  pages        = {7950--7964},
  year         = {2021},
  url          = {https://proceedings.neurips.cc/paper/2021/hash/42778ef0b5805a96f9511e20b5611fce-Abstract.html},
  bibsource    = {dblp computer science bibliography, https://dblp.org}
}

@inproceedings{kamath2023new,
  author       = {Gautam Kamath and
                  Argyris Mouzakis and
                  Vikrant Singhal},
  title        = {New Lower Bounds for Private Estimation and a Generalized Fingerprinting
                  Lemma},
  booktitle    = {NeurIPS},
  year         = {2022},
  url          = {http://papers.nips.cc/paper\_files/paper/2022/hash/9a6b278218966499194491f55ccf8b75-Abstract-Conference.html},
  bibsource    = {dblp computer science bibliography, https://dblp.org}
}

@article{cai2021cost,
  author       = {T. Tony Cai and
                  Yichen Wang and
                  Linjun Zhang},
  title        = {The Cost of Privacy: Optimal Rates of Convergence for Parameter Estimation
                  with Differential Privacy},
  journal      = {CoRR},
  volume       = {abs/1902.04495},
  year         = {2019},
  url          = {http://arxiv.org/abs/1902.04495},
  eprinttype    = {arXiv},
  eprint       = {1902.04495},
  bibsource    = {dblp computer science bibliography, https://dblp.org}
}

@article{singhal2023polynomial,
  author       = {Vikrant Singhal},
  title        = {A Polynomial Time, Pure Differentially Private Estimator for Binary
                  Product Distributions},
  journal      = {CoRR},
  volume       = {abs/2304.06787},
  year         = {2023},
  url          = {https://doi.org/10.48550/arXiv.2304.06787},
  doi          = {10.48550/ARXIV.2304.06787},
  eprinttype    = {arXiv},
  eprint       = {2304.06787},
  bibsource    = {dblp computer science bibliography, https://dblp.org}
}

@article{kamath2023biasvarianceprivacy,
  author       = {Gautam Kamath and
                  Argyris Mouzakis and
                  Matthew Regehr and
                  Vikrant Singhal and
                  Thomas Steinke and
                  Jonathan R. Ullman},
  title        = {A Bias-Variance-Privacy Trilemma for Statistical Estimation},
  journal      = {CoRR},
  volume       = {abs/2301.13334},
  year         = {2023},
  url          = {https://doi.org/10.48550/arXiv.2301.13334},
  doi          = {10.48550/ARXIV.2301.13334},
  eprinttype    = {arXiv},
  eprint       = {2301.13334},
  bibsource    = {dblp computer science bibliography, https://dblp.org}
}

@inproceedings{kaplan2022differentially,
  author       = {Haim Kaplan and
                  Shachar Schnapp and
                  Uri Stemmer},
  editor       = {Kamalika Chaudhuri and
                  Stefanie Jegelka and
                  Le Song and
                  Csaba Szepesv{\'{a}}ri and
                  Gang Niu and
                  Sivan Sabato},
  title        = {Differentially Private Approximate Quantiles},
  booktitle    = {International Conference on Machine Learning, {ICML} 2022, 17-23 July
                  2022, Baltimore, Maryland, {USA}},
  series       = {Proceedings of Machine Learning Research},
  volume       = {162},
  pages        = {10751--10761},
  publisher    = {{PMLR}},
  year         = {2022},
  url          = {https://proceedings.mlr.press/v162/kaplan22a.html},
  bibsource    = {dblp computer science bibliography, https://dblp.org}
}

@inproceedings{lalanne2023private,
  TITLE = {{Private Statistical Estimation of Many Quantiles}},
  AUTHOR = {Lalanne, Cl{\'e}ment and Garivier, Aur{\'e}lien and Gribonval, R{\'e}mi},
  URL = {https://hal.science/hal-03986170},
  BOOKTITLE = {{ICML 2023 - 40th International Conference on Machine Learning}},
  ADDRESS = {Honolulu, United States},
  YEAR = {2023},
  MONTH = Jul,
  HAL_ID = {hal-03986170},
  HAL_VERSION = {v2},
}

@article{
lalanne2023about,
title={About the Cost of Central Privacy in Density Estimation},
author={Cl{\'e}ment Lalanne and Aur{\'e}lien Garivier and R{\'e}mi Gribonval},
journal={Transactions on Machine Learning Research},
issn={2835-8856},
year={2023},
url={https://openreview.net/forum?id=uq29MIWvIV},
note={}
}

@inproceedings{10.1145/773153.773174,
author = {Evfimievski, Alexandre and Gehrke, Johannes and Srikant, Ramakrishnan},
title = {Limiting privacy breaches in privacy preserving data mining},
year = {2003},
isbn = {1581136706},
publisher = {Association for Computing Machinery},
address = {New York, NY, USA},
url = {https://doi.org/10.1145/773153.773174},
doi = {10.1145/773153.773174},
booktitle = {Proceedings of the Twenty-Second ACM SIGMOD-SIGACT-SIGART Symposium on Principles of Database Systems},
pages = {211–222},
numpages = {12},
location = {, San Diego, California, },
series = {PODS '03}
}

@INPROCEEDINGS{4690986,
  author={Kasiviswanathan, Shiva Prasad and Lee, Homin K. and Nissim, Kobbi and Raskhodnikova, Sofya and Smith, Adam},
  booktitle={2008 49th Annual IEEE Symposium on Foundations of Computer Science}, 
  title={What Can We Learn Privately?}, 
  year={2008},
  volume={},
  number={},
  pages={531-540},
  doi={10.1109/FOCS.2008.27}}

@article{beraha2023mcmc,
  title={MCMC for Bayesian nonparametric mixture modeling under differential privacy},
  author={Beraha, Mario and Favaro, Stefano and Rao, Vinayak},
  journal={arXiv preprint arXiv:2310.09818},
  year={2023}
}

@article{hutter2021minimax,
author = {Jan-Christian H{\"u}tter and Philippe Rigollet},
title = {{Minimax estimation of smooth optimal transport maps}},
volume = {49},
journal = {The Annals of Statistics},
number = {2},
publisher = {Institute of Mathematical Statistics},
pages = {1166 -- 1194},
year = {2021},
doi = {10.1214/20-AOS1997},
URL = {https://doi.org/10.1214/20-AOS1997}
}

@inproceedings{lalanne2024privatedensity,
  TITLE = {{Privately Learning Smooth Distributions on the Hypercube by Projections}},
  AUTHOR = {Lalanne, Cl{\'e}ment and Gadat, S{\'e}bastien},
  URL = {https://hal.science/hal-04549279},
  BOOKTITLE = {{ICML 2024 - 41st International Conference on Machine Learning}},
  ADDRESS = {Vienna, Austria},
  PAGES = {39 p.},
  YEAR = {2024},
  MONTH = Jul,
  HAL_ID = {hal-04549279},
  HAL_VERSION = {v2},
}

@inproceedings{arjovski2017WGAN,
  author       = {Mart{\'{\i}}n Arjovsky and
                  Soumith Chintala and
                  L{\'{e}}on Bottou},
  editor       = {Doina Precup and
                  Yee Whye Teh},
  title        = {Wasserstein Generative Adversarial Networks},
  booktitle    = {Proceedings of the 34th International Conference on Machine Learning,
                  {ICML} 2017, Sydney, NSW, Australia, 6-11 August 2017},
  series       = {Proceedings of Machine Learning Research},
  volume       = {70},
  pages        = {214--223},
  publisher    = {{PMLR}},
  year         = {2017},
  url          = {http://proceedings.mlr.press/v70/arjovsky17a.html},
  bibsource    = {dblp computer science bibliography, https://dblp.org}
}

@article{courty2017OTDomainAdapt,
  author       = {Nicolas Courty and
                  R{\'{e}}mi Flamary and
                  Devis Tuia and
                  Alain Rakotomamonjy},
  title        = {Optimal Transport for Domain Adaptation},
  journal      = {{IEEE} Trans. Pattern Anal. Mach. Intell.},
  volume       = {39},
  number       = {9},
  pages        = {1853--1865},
  year         = {2017},
  url          = {https://doi.org/10.1109/TPAMI.2016.2615921},
  doi          = {10.1109/TPAMI.2016.2615921},
  bibsource    = {dblp computer science bibliography, https://dblp.org}
}

@inproceedings{gordaliza2019obtainingFairness,
  author       = {Paula Gordaliza and
                  Eustasio del Barrio and
                  Fabrice Gamboa and
                  Jean{-}Michel Loubes},
  editor       = {Kamalika Chaudhuri and
                  Ruslan Salakhutdinov},
  title        = {Obtaining Fairness using Optimal Transport Theory},
  booktitle    = {Proceedings of the 36th International Conference on Machine Learning,
                  {ICML} 2019, 9-15 June 2019, Long Beach, California, {USA}},
  series       = {Proceedings of Machine Learning Research},
  volume       = {97},
  pages        = {2357--2365},
  publisher    = {{PMLR}},
  year         = {2019},
  url          = {http://proceedings.mlr.press/v97/gordaliza19a.html},
  bibsource    = {dblp computer science bibliography, https://dblp.org}
}

@inproceedings{rakotomamonjy2021DPslicedWasserstein,
  author       = {Alain Rakotomamonjy and
                  Liva Ralaivola},
  editor       = {Marina Meila and
                  Tong Zhang},
  title        = {Differentially Private Sliced Wasserstein Distance},
  booktitle    = {Proceedings of the 38th International Conference on Machine Learning,
                  {ICML} 2021, 18-24 July 2021, Virtual Event},
  series       = {Proceedings of Machine Learning Research},
  volume       = {139},
  pages        = {8810--8820},
  publisher    = {{PMLR}},
  year         = {2021},
  url          = {http://proceedings.mlr.press/v139/rakotomamonjy21a.html},
  bibsource    = {dblp computer science bibliography, https://dblp.org}
}

@inproceedings{harder2021dp,
  title={Dp-merf: Differentially private mean embeddings with randomfeatures for practical privacy-preserving data generation},
  author={Harder, Frederik and Adamczewski, Kamil and Park, Mijung},
  booktitle={International conference on artificial intelligence and statistics},
  pages={1819--1827},
  year={2021},
  organization={PMLR}
}

@inproceedings{tien2019DPOTdomainAdapt,
  author       = {Nam L{\^{e}} Tien and
                  Amaury Habrard and
                  Marc Sebban},
  editor       = {Sarit Kraus},
  title        = {Differentially Private Optimal Transport: Application to Domain Adaptation},
  booktitle    = {Proceedings of the Twenty-Eighth International Joint Conference on
                  Artificial Intelligence, {IJCAI} 2019, Macao, China, August 10-16,
                  2019},
  pages        = {2852--2858},
  publisher    = {ijcai.org},
  year         = {2019},
  url          = {https://doi.org/10.24963/ijcai.2019/395},
  doi          = {10.24963/IJCAI.2019/395},
  bibsource    = {dblp computer science bibliography, https://dblp.org}
}

@article{segag2023gradientFlow,
  author       = {Ilana Sebag and
                  Muni Sreenivas Pydi and
                  Jean{-}Yves Franceschi and
                  Alain Rakotomamonjy and
                  Mike Gartrell and
                  Jamal Atif and
                  Alexandre Allauzen},
  title        = {Differentially Private Gradient Flow based on the Sliced Wasserstein
                  Distance for Non-Parametric Generative Modeling},
  journal      = {CoRR},
  volume       = {abs/2312.08227},
  year         = {2023},
  url          = {https://doi.org/10.48550/arXiv.2312.08227},
  doi          = {10.48550/ARXIV.2312.08227},
  eprinttype    = {arXiv},
  eprint       = {2312.08227},
  bibsource    = {dblp computer science bibliography, https://dblp.org}
}

@inproceedings{pierquin2024Pufferfish,
  author       = {Cl{\'{e}}ment Pierquin and
                  Aur{\'{e}}lien Bellet and
                  Marc Tommasi and
                  Matthieu Boussard},
  title        = {R{\'{e}}nyi Pufferfish Privacy: General Additive Noise Mechanisms
                  and Privacy Amplification by Iteration via Shift Reduction Lemmas},
  booktitle    = {Forty-first International Conference on Machine Learning, {ICML} 2024,
                  Vienna, Austria, July 21-27, 2024},
  publisher    = {OpenReview.net},
  year         = {2024},
  url          = {https://openreview.net/forum?id=VZsxhPpu9T},
  bibsource    = {dblp computer science bibliography, https://dblp.org}
}

@inproceedings{Kawamoto2019localObfuscation,
  author       = {Yusuke Kawamoto and
                  Takao Murakami},
  editor       = {Kazue Sako and
                  Steve A. Schneider and
                  Peter Y. A. Ryan},
  title        = {Local Obfuscation Mechanisms for Hiding Probability Distributions},
  booktitle    = {Computer Security - {ESORICS} 2019 - 24th European Symposium on Research
                  in Computer Security, Luxembourg, September 23-27, 2019, Proceedings,
                  Part {I}},
  series       = {Lecture Notes in Computer Science},
  volume       = {11735},
  pages        = {128--148},
  publisher    = {Springer},
  year         = {2019},
  url          = {https://doi.org/10.1007/978-3-030-29959-0\_7},
  doi          = {10.1007/978-3-030-29959-0\_7},
  bibsource    = {dblp computer science bibliography, https://dblp.org}
}

@inproceedings{yang2024wassersteinDP,
  author       = {Chengyi Yang and
                  Jiayin Qi and
                  Aimin Zhou},
  editor       = {Michael J. Wooldridge and
                  Jennifer G. Dy and
                  Sriraam Natarajan},
  title        = {Wasserstein Differential Privacy},
  booktitle    = {Thirty-Eighth {AAAI} Conference on Artificial Intelligence, {AAAI}
                  2024, Thirty-Sixth Conference on Innovative Applications of Artificial
                  Intelligence, {IAAI} 2024, Fourteenth Symposium on Educational Advances
                  in Artificial Intelligence, {EAAI} 2014, February 20-27, 2024, Vancouver,
                  Canada},
  pages        = {16299--16307},
  publisher    = {{AAAI} Press},
  year         = {2024},
  url          = {https://doi.org/10.1609/aaai.v38i15.29565},
  doi          = {10.1609/AAAI.V38I15.29565},
  bibsource    = {dblp computer science bibliography, https://dblp.org}
}

@article{DBLP:journals/corr/abs-1811-01124,
  author       = {Jean Alaux and
                  Edouard Grave and
                  Marco Cuturi and
                  Armand Joulin},
  title        = {Unsupervised Hyperalignment for Multilingual Word Embeddings},
  journal      = {CoRR},
  volume       = {abs/1811.01124},
  year         = {2018},
  url          = {http://arxiv.org/abs/1811.01124},
  eprinttype    = {arXiv},
  eprint       = {1811.01124},
  bibsource    = {dblp computer science bibliography, https://dblp.org}
}

@inproceedings{DBLP:conf/aistats/Alvarez-MelisJJ18,
  author       = {David Alvarez{-}Melis and
                  Tommi S. Jaakkola and
                  Stefanie Jegelka},
  editor       = {Amos J. Storkey and
                  Fernando P{\'{e}}rez{-}Cruz},
  title        = {Structured Optimal Transport},
  booktitle    = {International Conference on Artificial Intelligence and Statistics,
                  {AISTATS} 2018, 9-11 April 2018, Playa Blanca, Lanzarote, Canary Islands,
                  Spain},
  series       = {Proceedings of Machine Learning Research},
  volume       = {84},
  pages        = {1771--1780},
  publisher    = {{PMLR}},
  year         = {2018},
  url          = {http://proceedings.mlr.press/v84/alvarez-melis18a.html},
  bibsource    = {dblp computer science bibliography, https://dblp.org}
}

@inproceedings{DBLP:conf/nips/CanasR12,
  author       = {Guillermo D. Ca{\~{n}}as and
                  Lorenzo Rosasco},
  editor       = {Peter L. Bartlett and
                  Fernando C. N. Pereira and
                  Christopher J. C. Burges and
                  L{\'{e}}on Bottou and
                  Kilian Q. Weinberger},
  title        = {Learning Probability Measures with respect to Optimal Transport Metrics},
  booktitle    = {Advances in Neural Information Processing Systems 25: 26th Annual
                  Conference on Neural Information Processing Systems 2012. Proceedings
                  of a meeting held December 3-6, 2012, Lake Tahoe, Nevada, United States},
  pages        = {2501--2509},
  year         = {2012},
  url          = {https://proceedings.neurips.cc/paper/2012/hash/c54e7837e0cd0ced286cb5995327d1ab-Abstract.html},
  bibsource    = {dblp computer science bibliography, https://dblp.org}
}

@article{DBLP:journals/ml/FlamaryCCR18,
  author       = {R{\'{e}}mi Flamary and
                  Marco Cuturi and
                  Nicolas Courty and
                  Alain Rakotomamonjy},
  title        = {Wasserstein discriminant analysis},
  journal      = {Mach. Learn.},
  volume       = {107},
  number       = {12},
  pages        = {1923--1945},
  year         = {2018},
  url          = {https://doi.org/10.1007/s10994-018-5717-1},
  doi          = {10.1007/S10994-018-5717-1},
  bibsource    = {dblp computer science bibliography, https://dblp.org}
}

@inproceedings{DBLP:conf/aistats/GenevayPC18,
  author       = {Aude Genevay and
                  Gabriel Peyr{\'{e}} and
                  Marco Cuturi},
  editor       = {Amos J. Storkey and
                  Fernando P{\'{e}}rez{-}Cruz},
  title        = {Learning Generative Models with Sinkhorn Divergences},
  booktitle    = {International Conference on Artificial Intelligence and Statistics,
                  {AISTATS} 2018, 9-11 April 2018, Playa Blanca, Lanzarote, Canary Islands,
                  Spain},
  series       = {Proceedings of Machine Learning Research},
  volume       = {84},
  pages        = {1608--1617},
  publisher    = {{PMLR}},
  year         = {2018},
  url          = {http://proceedings.mlr.press/v84/genevay18a.html},
  bibsource    = {dblp computer science bibliography, https://dblp.org}
}

@inproceedings{DBLP:conf/aistats/GraveJB19,
  author       = {Edouard Grave and
                  Armand Joulin and
                  Quentin Berthet},
  editor       = {Kamalika Chaudhuri and
                  Masashi Sugiyama},
  title        = {Unsupervised Alignment of Embeddings with Wasserstein Procrustes},
  booktitle    = {The 22nd International Conference on Artificial Intelligence and Statistics,
                  {AISTATS} 2019, 16-18 April 2019, Naha, Okinawa, Japan},
  series       = {Proceedings of Machine Learning Research},
  volume       = {89},
  pages        = {1880--1890},
  publisher    = {{PMLR}},
  year         = {2019},
  url          = {http://proceedings.mlr.press/v89/grave19a.html},
  bibsource    = {dblp computer science bibliography, https://dblp.org}
}

@inproceedings{DBLP:conf/aistats/JanatiCG19,
  author       = {Hicham Janati and
                  Marco Cuturi and
                  Alexandre Gramfort},
  editor       = {Kamalika Chaudhuri and
                  Masashi Sugiyama},
  title        = {Wasserstein regularization for sparse multi-task regression},
  booktitle    = {The 22nd International Conference on Artificial Intelligence and Statistics,
                  {AISTATS} 2019, 16-18 April 2019, Naha, Okinawa, Japan},
  series       = {Proceedings of Machine Learning Research},
  volume       = {89},
  pages        = {1407--1416},
  publisher    = {{PMLR}},
  year         = {2019},
  url          = {http://proceedings.mlr.press/v89/janati19a.html},
  bibsource    = {dblp computer science bibliography, https://dblp.org}
}

@inproceedings{DBLP:conf/nips/MontavonMC16,
  author       = {Gr{\'{e}}goire Montavon and
                  Klaus{-}Robert M{\"{u}}ller and
                  Marco Cuturi},
  editor       = {Daniel D. Lee and
                  Masashi Sugiyama and
                  Ulrike von Luxburg and
                  Isabelle Guyon and
                  Roman Garnett},
  title        = {Wasserstein Training of Restricted Boltzmann Machines},
  booktitle    = {Advances in Neural Information Processing Systems 29: Annual Conference
                  on Neural Information Processing Systems 2016, December 5-10, 2016,
                  Barcelona, Spain},
  pages        = {3711--3719},
  year         = {2016},
  url          = {https://proceedings.neurips.cc/paper/2016/hash/728f206c2a01bf572b5940d7d9a8fa4c-Abstract.html},
  bibsource    = {dblp computer science bibliography, https://dblp.org}
}

@article{DBLP:journals/siamis/SchmitzHBMCCPS18,
  author       = {Morgan A. Schmitz and
                  Matthieu Heitz and
                  Nicolas Bonneel and
                  Fred Maurice Ngol{\`{e}} Mboula and
                  David Coeurjolly and
                  Marco Cuturi and
                  Gabriel Peyr{\'{e}} and
                  Jean{-}Luc Starck},
  title        = {Wasserstein Dictionary Learning: Optimal Transport-Based Unsupervised
                  Nonlinear Dictionary Learning},
  journal      = {{SIAM} J. Imaging Sci.},
  volume       = {11},
  number       = {1},
  pages        = {643--678},
  year         = {2018},
  url          = {https://doi.org/10.1137/17M1140431},
  doi          = {10.1137/17M1140431},
  bibsource    = {dblp computer science bibliography, https://dblp.org}
}

@inproceedings{DBLP:conf/nips/StaibCSJ17,
  author       = {Matthew Staib and
                  Sebastian Claici and
                  Justin Solomon and
                  Stefanie Jegelka},
  editor       = {Isabelle Guyon and
                  Ulrike von Luxburg and
                  Samy Bengio and
                  Hanna M. Wallach and
                  Rob Fergus and
                  S. V. N. Vishwanathan and
                  Roman Garnett},
  title        = {Parallel Streaming Wasserstein Barycenters},
  booktitle    = {Advances in Neural Information Processing Systems 30: Annual Conference
                  on Neural Information Processing Systems 2017, December 4-9, 2017,
                  Long Beach, CA, {USA}},
  pages        = {2647--2658},
  year         = {2017},
  url          = {https://proceedings.neurips.cc/paper/2017/hash/253f7b5d921338af34da817c00f42753-Abstract.html},
  bibsource    = {dblp computer science bibliography, https://dblp.org}
}

@inproceedings{DBLP:conf/iclr/LeNSHHX24,
  author       = {Thanh{-}Tung Le and
                  Khai Nguyen and
                  Shanlin Sun and
                  Kun Han and
                  Nhat Ho and
                  Xiaohui Xie},
  title        = {Diffeomorphic Mesh Deformation via Efficient Optimal Transport for
                  Cortical Surface Reconstruction},
  booktitle    = {The Twelfth International Conference on Learning Representations,
                  {ICLR} 2024, Vienna, Austria, May 7-11, 2024},
  publisher    = {OpenReview.net},
  year         = {2024},
  url          = {https://openreview.net/forum?id=gxhRR8vUQb},
  bibsource    = {dblp computer science bibliography, https://dblp.org}
}

@inproceedings{DBLP:conf/miccai/FeydyCVP17,
  author       = {Jean Feydy and
                  Benjamin Charlier and
                  Fran{\c{c}}ois{-}Xavier Vialard and
                  Gabriel Peyr{\'{e}}},
  editor       = {Maxime Descoteaux and
                  Lena Maier{-}Hein and
                  Alfred M. Franz and
                  Pierre Jannin and
                  D. Louis Collins and
                  Simon Duchesne},
  title        = {Optimal Transport for Diffeomorphic Registration},
  booktitle    = {Medical Image Computing and Computer Assisted Intervention - {MICCAI}
                  2017 - 20th International Conference, Quebec City, QC, Canada, September
                  11-13, 2017, Proceedings, Part {I}},
  series       = {Lecture Notes in Computer Science},
  volume       = {10433},
  pages        = {291--299},
  publisher    = {Springer},
  year         = {2017},
  url          = {https://doi.org/10.1007/978-3-319-66182-7\_34},
  doi          = {10.1007/978-3-319-66182-7\_34},
  bibsource    = {dblp computer science bibliography, https://dblp.org}
}

@article{DBLP:journals/tog/LavenantCCS18,
  author       = {Hugo Lavenant and
                  Sebastian Claici and
                  Edward Chien and
                  Justin Solomon},
  title        = {Dynamical optimal transport on discrete surfaces},
  journal      = {{ACM} Trans. Graph.},
  volume       = {37},
  number       = {6},
  pages        = {250},
  year         = {2018},
  url          = {https://doi.org/10.1145/3272127.3275064},
  doi          = {10.1145/3272127.3275064},
  bibsource    = {dblp computer science bibliography, https://dblp.org}
}

@article{DBLP:journals/tog/SolomonGPCBNDG15,
  author       = {Justin Solomon and
                  Fernando de Goes and
                  Gabriel Peyr{\'{e}} and
                  Marco Cuturi and
                  Adrian Butscher and
                  Andy Nguyen and
                  Tao Du and
                  Leonidas J. Guibas},
  title        = {Convolutional wasserstein distances: efficient optimal transportation
                  on geometric domains},
  journal      = {{ACM} Trans. Graph.},
  volume       = {34},
  number       = {4},
  pages        = {66:1--66:11},
  year         = {2015},
  url          = {https://doi.org/10.1145/2766963},
  doi          = {10.1145/2766963},
  bibsource    = {dblp computer science bibliography, https://dblp.org}
}

@article{DBLP:journals/tog/SolomonPKS16,
  author       = {Justin Solomon and
                  Gabriel Peyr{\'{e}} and
                  Vladimir G. Kim and
                  Suvrit Sra},
  title        = {Entropic metric alignment for correspondence problems},
  journal      = {{ACM} Trans. Graph.},
  volume       = {35},
  number       = {4},
  pages        = {72:1--72:13},
  year         = {2016},
  url          = {https://doi.org/10.1145/2897824.2925903},
  doi          = {10.1145/2897824.2925903},
  bibsource    = {dblp computer science bibliography, https://dblp.org}
}

@article{DBLP:journals/siamsc/CazellesSBCP18,
  author       = {Elsa Cazelles and
                  Vivien Seguy and
                  J{\'{e}}r{\'{e}}mie Bigot and
                  Marco Cuturi and
                  Nicolas Papadakis},
  title        = {Geodesic {PCA} versus Log-PCA of Histograms in the Wasserstein Space},
  journal      = {{SIAM} J. Sci. Comput.},
  volume       = {40},
  number       = {2},
  year         = {2018},
  url          = {https://doi.org/10.1137/17M1143459},
  doi          = {10.1137/17M1143459},
  bibsource    = {dblp computer science bibliography, https://dblp.org}
}

@article{DBLP:journals/ma/BarrioGLL19,
  author       = {Eustasio del Barrio and
                  Paula Gordaliza and
                  H{\'{e}}l{\`{e}}ne Lescornel and
                  Jean{-}Michel Loubes},
  title        = {Central limit theorem and bootstrap procedure for Wasserstein's variations
                  with an application to structural relationships between distributions},
  journal      = {J. Multivar. Anal.},
  volume       = {169},
  pages        = {341--362},
  year         = {2019},
  url          = {https://doi.org/10.1016/j.jmva.2018.09.014},
  doi          = {10.1016/J.JMVA.2018.09.014},
  bibsource    = {dblp computer science bibliography, https://dblp.org}
}

@article{DBLP:journals/simods/KlattTM20,
  author       = {Marcel Klatt and
                  Carla Tameling and
                  Axel Munk},
  title        = {Empirical Regularized Optimal Transport: Statistical Theory and Applications},
  journal      = {{SIAM} J. Math. Data Sci.},
  volume       = {2},
  number       = {2},
  pages        = {419--443},
  year         = {2020},
  url          = {https://doi.org/10.1137/19M1278788},
  doi          = {10.1137/19M1278788},
  bibsource    = {dblp computer science bibliography, https://dblp.org}
}

@article{Kroshnin2019StatisticalIF,
  title={Statistical inference for Bures–Wasserstein barycenters},
  author={Alexey Kroshnin and Vladimir G. Spokoiny and Alexandra L. Suvorikova},
  journal={The Annals of Applied Probability},
  year={2019},
  url={https://api.semanticscholar.org/CorpusID:88524508}
}

@article{annurev:/content/journals/10.1146/annurev-statistics-030718-104938,
   author = "Panaretos, Victor M. and Zemel, Yoav",
   title = "Statistical Aspects of Wasserstein Distances", 
   journal= "Annual Review of Statistics and Its Application",
   year = "2019",
   volume = "6",
   number = "Volume 6, 2019",
   pages = "405-431",
   doi = "https://doi.org/10.1146/annurev-statistics-030718-104938",
   url = "https://www.annualreviews.org/content/journals/10.1146/annurev-statistics-030718-104938",
   publisher = "Annual Reviews",
   issn = "2326-831X",
   type = "Journal Article",
  }

@article{DBLP:journals/entropy/RamdasTC17,
  author       = {Aaditya Ramdas and
                  Nicol{\'{a}}s Garc{\'{\i}}a Trillos and
                  Marco Cuturi},
  title        = {On Wasserstein Two-Sample Testing and Related Families of Nonparametric
                  Tests},
  journal      = {Entropy},
  volume       = {19},
  number       = {2},
  pages        = {47},
  year         = {2017},
  url          = {https://doi.org/10.3390/e19020047},
  doi          = {10.3390/E19020047},
  bibsource    = {dblp computer science bibliography, https://dblp.org}
}

@article{RIGOLLET20181228,
title = {Entropic optimal transport is maximum-likelihood deconvolution},
journal = {Comptes Rendus Mathematique},
volume = {356},
number = {11},
pages = {1228-1235},
year = {2018},
issn = {1631-073X},
doi = {https://doi.org/10.1016/j.crma.2018.10.010},
url = {https://www.sciencedirect.com/science/article/pii/S1631073X18302802},
author = {Philippe Rigollet and Jonathan Weed}
}

@inproceedings{DBLP:conf/nips/SeguyC15,
  author       = {Vivien Seguy and
                  Marco Cuturi},
  editor       = {Corinna Cortes and
                  Neil D. Lawrence and
                  Daniel D. Lee and
                  Masashi Sugiyama and
                  Roman Garnett},
  title        = {Principal Geodesic Analysis for Probability Measures under the Optimal
                  Transport Metric},
  booktitle    = {Advances in Neural Information Processing Systems 28: Annual Conference
                  on Neural Information Processing Systems 2015, December 7-12, 2015,
                  Montreal, Quebec, Canada},
  pages        = {3312--3320},
  year         = {2015},
  url          = {https://proceedings.neurips.cc/paper/2015/hash/f26dab9bf6a137c3b6782e562794c2f2-Abstract.html},
  bibsource    = {dblp computer science bibliography, https://dblp.org}
}

@inproceedings{DBLP:conf/dsw/TamelingM18,
  author       = {Carla Tameling and
                  Axel Munk},
  title        = {Computational Strategies for Statistical Inference Based on Empirical
                  Optimal Transport},
  booktitle    = {2018 {IEEE} Data Science Workshop, {DSW} 2018, Lausanne, Switzerland,
                  June 4-6, 2018},
  pages        = {175--179},
  publisher    = {{IEEE}},
  year         = {2018},
  url          = {https://doi.org/10.1109/DSW.2018.8439912},
  doi          = {10.1109/DSW.2018.8439912},
  bibsource    = {dblp computer science bibliography, https://dblp.org}
}

@inproceedings{DBLP:conf/colt/WeedB19,
  author       = {Jonathan Weed and
                  Quentin Berthet},
  editor       = {Alina Beygelzimer and
                  Daniel Hsu},
  title        = {Estimation of smooth densities in Wasserstein distance},
  booktitle    = {Conference on Learning Theory, {COLT} 2019, 25-28 June 2019, Phoenix,
                  AZ, {USA}},
  series       = {Proceedings of Machine Learning Research},
  volume       = {99},
  pages        = {3118--3119},
  publisher    = {{PMLR}},
  year         = {2019},
  url          = {http://proceedings.mlr.press/v99/weed19a.html},
  bibsource    = {dblp computer science bibliography, https://dblp.org}
}

@article{10.3150/17-BEJ1009,
author = {Yoav Zemel and Victor M. Panaretos},
title = {{Fréchet means and Procrustes analysis in Wasserstein space}},
volume = {25},
journal = {Bernoulli},
number = {2},
publisher = {Bernoulli Society for Mathematical Statistics and Probability},
pages = {932 -- 976},
year = {2019},
doi = {10.3150/17-BEJ1009},
URL = {https://doi.org/10.3150/17-BEJ1009}
}

@inproceedings{NIPS2017_0070d23b,
 author = {Courty, Nicolas and Flamary, R\'{e}mi and Habrard, Amaury and Rakotomamonjy, Alain},
 booktitle = {Advances in Neural Information Processing Systems},
 editor = {I. Guyon and U. Von Luxburg and S. Bengio and H. Wallach and R. Fergus and S. Vishwanathan and R. Garnett},
 pages = {},
 publisher = {Curran Associates, Inc.},
 title = {Joint distribution optimal transportation for domain adaptation},
 url = {https://proceedings.neurips.cc/paper_files/paper/2017/file/0070d23b06b1486a538c0eaa45dd167a-Paper.pdf},
 volume = {30},
 year = {2017}
}

@inproceedings{DBLP:conf/pkdd/CourtyFT14,
  author       = {Nicolas Courty and
                  R{\'{e}}mi Flamary and
                  Devis Tuia},
  editor       = {Toon Calders and
                  Floriana Esposito and
                  Eyke H{\"{u}}llermeier and
                  Rosa Meo},
  title        = {Domain Adaptation with Regularized Optimal Transport},
  booktitle    = {Machine Learning and Knowledge Discovery in Databases - European Conference,
                  {ECML} {PKDD} 2014, Nancy, France, September 15-19, 2014. Proceedings,
                  Part {I}},
  series       = {Lecture Notes in Computer Science},
  volume       = {8724},
  pages        = {274--289},
  publisher    = {Springer},
  year         = {2014},
  url          = {https://doi.org/10.1007/978-3-662-44848-9\_18},
  doi          = {10.1007/978-3-662-44848-9\_18},
  bibsource    = {dblp computer science bibliography, https://dblp.org}
}

@inproceedings{DBLP:conf/eccv/DamodaranKFTC18,
  author       = {Bharath Bhushan Damodaran and
                  Benjamin Kellenberger and
                  R{\'{e}}mi Flamary and
                  Devis Tuia and
                  Nicolas Courty},
  editor       = {Vittorio Ferrari and
                  Martial Hebert and
                  Cristian Sminchisescu and
                  Yair Weiss},
  title        = {DeepJDOT: Deep Joint Distribution Optimal Transport for Unsupervised
                  Domain Adaptation},
  booktitle    = {Computer Vision - {ECCV} 2018 - 15th European Conference, Munich,
                  Germany, September 8-14, 2018, Proceedings, Part {IV}},
  series       = {Lecture Notes in Computer Science},
  volume       = {11208},
  pages        = {467--483},
  publisher    = {Springer},
  year         = {2018},
  url          = {https://doi.org/10.1007/978-3-030-01225-0\_28},
  doi          = {10.1007/978-3-030-01225-0\_28},
  bibsource    = {dblp computer science bibliography, https://dblp.org}
}

@inproceedings{DBLP:conf/aistats/ForrowHNRSW19,
  author       = {Aden Forrow and
                  Jan{-}Christian H{\"{u}}tter and
                  Mor Nitzan and
                  Philippe Rigollet and
                  Geoffrey Schiebinger and
                  Jonathan Weed},
  editor       = {Kamalika Chaudhuri and
                  Masashi Sugiyama},
  title        = {Statistical Optimal Transport via Factored Couplings},
  booktitle    = {The 22nd International Conference on Artificial Intelligence and Statistics,
                  {AISTATS} 2019, 16-18 April 2019, Naha, Okinawa, Japan},
  series       = {Proceedings of Machine Learning Research},
  volume       = {89},
  pages        = {2454--2465},
  publisher    = {{PMLR}},
  year         = {2019},
  url          = {http://proceedings.mlr.press/v89/forrow19a.html},
  bibsource    = {dblp computer science bibliography, https://dblp.org}
}

@inproceedings{DBLP:conf/nips/PerrotCFH16,
  author       = {Micha{\"{e}}l Perrot and
                  Nicolas Courty and
                  R{\'{e}}mi Flamary and
                  Amaury Habrard},
  editor       = {Daniel D. Lee and
                  Masashi Sugiyama and
                  Ulrike von Luxburg and
                  Isabelle Guyon and
                  Roman Garnett},
  title        = {Mapping Estimation for Discrete Optimal Transport},
  booktitle    = {Advances in Neural Information Processing Systems 29: Annual Conference
                  on Neural Information Processing Systems 2016, December 5-10, 2016,
                  Barcelona, Spain},
  pages        = {4197--4205},
  year         = {2016},
  url          = {https://proceedings.neurips.cc/paper/2016/hash/26f5bd4aa64fdadf96152ca6e6408068-Abstract.html},
  bibsource    = {dblp computer science bibliography, https://dblp.org}
}

@inproceedings{DBLP:conf/iclr/SeguyDFCRB18,
  author       = {Vivien Seguy and
                  Bharath Bhushan Damodaran and
                  R{\'{e}}mi Flamary and
                  Nicolas Courty and
                  Antoine Rolet and
                  Mathieu Blondel},
  title        = {Large Scale Optimal Transport and Mapping Estimation},
  booktitle    = {6th International Conference on Learning Representations, {ICLR} 2018,
                  Vancouver, BC, Canada, April 30 - May 3, 2018, Conference Track Proceedings},
  publisher    = {OpenReview.net},
  year         = {2018},
  url          = {https://openreview.net/forum?id=B1zlp1bRW},
  bibsource    = {dblp computer science bibliography, https://dblp.org}
}

@inproceedings{chhor2023robust,
  title={Robust estimation of discrete distributions under local differential privacy},
  author={Chhor, Julien and Sentenac, Flore},
  booktitle={International Conference on Algorithmic Learning Theory},
  pages={411--446},
  year={2023},
  organization={PMLR}
}

@inproceedings{lalanne2025PrivateOTMaps,
  TITLE = {{On the Private Estimation of Smooth Transport Maps}},
  AUTHOR = {Lalanne, Cl{\'e}ment and Iutzeler, Franck and Loubes, Jean-Michel and Chhor, Julien},
  URL = {https://hal.science/hal-04923578},
  BOOKTITLE = {{ICML 2025 - 42nd International Conference on Machine Learning}},
  ADDRESS = {Vancouver (BC), Canada},
  PAGES = {30 p.},
  YEAR = {2025},
  MONTH = Jul,
  HAL_ID = {hal-04923578},
  HAL_VERSION = {v1},
}

@misc{balakrishnan2025stabilityboundssmoothoptimal,
      title={Stability Bounds for Smooth Optimal Transport Maps and their Statistical Implications}, 
      author={Sivaraman Balakrishnan and Tudor Manole},
      year={2025},
      eprint={2502.12326},
      archivePrefix={arXiv},
      primaryClass={math.ST},
      url={https://arxiv.org/abs/2502.12326}, 
}

@article{pooladian2021entropic, title={Entropic estimation of optimal transport maps}, author={Pooladian, Aram-Alexandre and Niles-Weed, Jonathan}, journal={arXiv preprint arXiv:2109.12004}, year={2021} }

@misc{chernozhukov2015mongekantorovichdepthquantilesranks,
      title={Monge-Kantorovich Depth, Quantiles, Ranks, and Signs}, 
      author={Victor Chernozhukov and Alfred Galichon and Marc Hallin and Marc Henry},
      year={2015},
      eprint={1412.8434},
      archivePrefix={arXiv},
      primaryClass={math.ST},
      url={https://arxiv.org/abs/1412.8434}, 
}

@misc{deb2019multivariaterankbaseddistributionfreenonparametric,
      title={Multivariate Rank-based Distribution-free Nonparametric Testing using Measure Transportation}, 
      author={Nabarun Deb and Bodhisattva Sen},
      year={2019},
      eprint={1909.08733},
      archivePrefix={arXiv},
      primaryClass={math.ST},
      url={https://arxiv.org/abs/1909.08733}, 
}

@article{Gunsilius_2022, title={ON THE CONVERGENCE RATE OF POTENTIALS OF BRENIER MAPS}, volume={38}, DOI={10.1017/S0266466621000037}, number={2}, journal={Econometric Theory}, author={Gunsilius, Florian F.}, year={2022}, pages={381–417}}

@misc{ding2024statisticalconvergenceratesoptimal,
      title={Statistical Convergence Rates of Optimal Transport Map Estimation between General Distributions}, 
      author={Yizhe Ding and Runze Li and Lingzhou Xue},
      year={2024},
      eprint={2412.08064},
      archivePrefix={arXiv},
      primaryClass={math.ST},
      url={https://arxiv.org/abs/2412.08064}, 
}

@misc{muzellec2021nearoptimalestimationsmoothtransport,
      title={Near-optimal estimation of smooth transport maps with kernel sums-of-squares}, 
      author={Boris Muzellec and Adrien Vacher and Francis Bach and François-Xavier Vialard and Alessandro Rudi},
      year={2021},
      eprint={2112.01907},
      archivePrefix={arXiv},
      primaryClass={stat.ML},
      url={https://arxiv.org/abs/2112.01907}, 
}

@unpublished{rodriguezvitores2025slicedprivacy,
  TITLE = {{Learning with Differentially Private (Sliced) Wasserstein Gradients}},
  AUTHOR = {Rodr{\'i}guez-V{\'i}tores, David and Lalanne, Cl{\'e}ment and Loubes, Jean-Michel},
  URL = {https://hal.science/hal-04923829},
  NOTE = {working paper or preprint},
  YEAR = {2025},
  MONTH = May,
  HAL_ID = {hal-04923829},
  HAL_VERSION = {v2},
}

@article{holm2020orthonormal,
  title={Orthonormal, moment preserving boundary wavelet scaling functions in Python},
  author={Holm, Josefine and Arildsen, Thomas and Nielsen, Morten and Nielsen, Steffen L{\o}nsmann},
  journal={SN Applied Sciences},
  volume={2},
  number={12},
  pages={2032},
  year={2020},
  publisher={Springer}
}

@article{cohen1993wavelets,
  title={Wavelets on the interval and fast wavelet transforms},
  author={Cohen, Albert and Daubechies, Ingrid and Vial, Pierre},
  journal={Applied and computational harmonic analysis},
  year={1993}
}

@article{cuturi2022optimal,
  title={Optimal transport tools (ott): A jax toolbox for all things wasserstein},
  author={Cuturi, Marco and Meng-Papaxanthos, Laetitia and Tian, Yingtao and Bunne, Charlotte and Davis, Geoff and Teboul, Olivier},
  journal={arXiv preprint arXiv:2201.12324},
  year={2022}
}

@article{flamary2021pot,
  title={Pot: Python optimal transport},
  author={Flamary, R{\'e}mi and Courty, Nicolas and Gramfort, Alexandre and Alaya, Mokhtar Z and Boisbunon, Aur{\'e}lie and Chambon, Stanislas and Chapel, Laetitia and Corenflos, Adrien and Fatras, Kilian and Fournier, Nemo and others},
  journal={Journal of Machine Learning Research},
  volume={22},
  number={78},
  pages={1--8},
  year={2021}
}
\bibliographystyle{abbrv}

\newpage
\appendix

\section{Complements on \Cref{sec:density_estimation_wasserstein}}

\begin{theorem}
\label{th:utility_density_estimation}
    Under the same notations as in \Cref{sec:density_estimation_wasserstein}, if there exists $s > 0$ such that $f_Z \in \mathfrak B_{\infty, \infty}^s(\Omega)$ and $\gamma > 0$ such that $\gamma > f_Z > \gamma^{-1}$ on $\Omega$, then the estimator $\hat{f}_{Z, \textrm{priv}}$ using
    $$ 2^J \asymp \min \paren*{n^{\frac{1}{d + 2s}}, (\pi_{n,\epsilon})^{\frac{1}{\max(d;3/2)+s}}} $$
satisfies 
\begin{align*}
   \E*{W_2^2(f_Z, \hat{f}_{Z, J, \textrm{priv}})} \lesssim 
    &
    \left\{
    \hspace*{-0.2cm}\begin{array}{ll}
       \max \paren*{n^{- \frac{2 (s + 1)}{ 2 s + d}}, (\pi_{n,\epsilon})^{- \frac{2 (s + 1)}{ s + d}}}   &  \text{for $d\geq 3 $}\\
             \max \paren*{n^{-1}, (\pi_{n,\epsilon})^{- \frac{2 (s + 1)}{ s + 2}}} \hspace*{-0.1cm}  \times \hspace*{-0.1cm} \textrm{\small PolyLog}(n, \epsilon) & \text{for $d = 2 $ }\\
     \max \paren*{n^{- 1 }, (\pi_{n,\epsilon})^{- \frac{2 (s + 1)}{ s + 3/2}}} &  \text{for $d = 1 $ }
    \end{array} \right.
\end{align*}
\end{theorem}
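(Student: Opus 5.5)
We will bound $W_2^2$ by a negative-order Besov norm of the density error and then control each wavelet level separately. Because both $f_Z$ and $\hat f_{Z,\textrm{priv}}$ are bounded below on $\Omega$ (the first by assumption, the second on a high-probability event), the $W_2$ distance is equivalent to the dual Sobolev norm $\dot H^{-1}$. We use the comparison of \cite{Manole2024Plugin} (their Proposition on $W_2$ versus $\mathfrak B^{-1}_{2,2}$), which gives $W_2^2(f,g)\lesssim \|f-g\|^2_{\mathfrak B^{-1}_{2,2}}$ when both densities are bounded below. In wavelet terms this reads
\[
W_2^2(f_Z,g) \lesssim \sum_{\xi\in\Phi}\beta_\xi^2 + \sum_{j\ge j_0} 2^{-2j}\sum_{\xi\in\Psi_j}(\beta_\xi(f_Z)-\beta_\xi(g))^2 .
\]

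\textbf{Step 1: the event $\mathcal E$.} Let $\mathcal E$ be the event $\|\tilde f_{Z,\textrm{priv}}-f_Z\|_\infty\le (2\gamma)^{-1}$. On $\mathcal E$, clipping and renormalisation do nothing: the normalising constant differs from $1$ only through the $\Phi$ coefficients, which contribute $O(\text{coefficient error})$. So on $\mathcal E$ we may replace $\hat f$ by $\tilde f$ up to a constant. To control $\mathcal E^c$, bound $\|\tilde f-f_Z\|_\infty$ level by level using Assumption~\ref{ass:wavelet_scaling_support}: at level $j$ the sup norm is at most $C_3C_4 2^{jd/2}\max_{\xi\in\Psi_j}|\text{coef error}|$. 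The Laplace and Bernstein tails then give
\[
\PP{\mathcal E^c}\lesssim \exp\!\bigl(-c\min(n2^{-Jd},\ \pi_{n,\epsilon}2^{-Jd})/\log\bigr).
\]
The bias in sup norm is $2^{-Js}\to 0$ since $f_Z\in\mathfrak B^s_{\infty,\infty}$. With the prescribed $J$ this probability is super-polynomially small; this requires $2^{Jd}\ll\pi_{n,\epsilon}$, which holds because $2^{J(d+s)}\asymp\pi$. Since $W_2^2\le d$ on $\Omega$, the contribution of $\mathcal E^c$ is negligible.

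\textbf{Step 2: decomposing the error on $\mathcal E$.} The error splits into three pieces.
\begin{itemize}
\item[(i)] Bias from truncating at level $J$: $\sum_{j>J}2^{-2j}\|\beta_{\cdot,j}\|_2^2\lesssim \sum_{j>J}2^{-2j}2^{jd}2^{-2j(s+d/2)}\asymp 2^{-2J(s+1)}$.
\item[(ii)] Sampling variance: $\sum_{j\le J}2^{-2j}|\Psi_j|\,n^{-1}\asymp n^{-1}\sum_{j\le J}2^{j(d-2)}$. This equals $n^{-1}2^{J(d-2)}$ for $d\ge3$, $n^{-1}J$ for $d=2$, and $n^{-1}$ for $d=1$.
\item[(iii)] Privacy noise: each coefficient receives Laplace noise of variance $2\Delta^2/\pi_{n,\epsilon}^2$ with $\Delta\asymp 2^{Jd/2}$. (In the local model the averaged noise has variance $\Delta^2/(n\epsilon^2)$, consistent with $\pi=\sqrt n\epsilon$.) This gives a contribution $2^{Jd}\pi^{-2}\sum_{j\le J}2^{j(d-2)}$, which is $\pi^{-2}2^{J(2d-2)}$ for $d\ge3$, $\pi^{-2}2^{2J}J$ for $d=2$, and $\pi^{-2}2^{J}$ for $d=1$.
\end{itemize}

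\textbf{Step 3: balancing.} For $d\ge3$, balancing bias against variance gives $2^J=n^{1/(2s+d)}$ with rate $n^{-2(s+1)/(2s+d)}$. Balancing bias against privacy gives $2^{J(2s+2d)}=\pi^2$, i.e.\ $2^J=\pi^{1/(s+d)}$, with rate $\pi^{-2(s+1)/(s+d)}$. Taking the minimum of the two choices of $J$ makes every term bounded by the maximum. For $d=2$ the same computation goes through with logarithmic factors. For $d=1$ the privacy term $\pi^{-2}2^J$ against the bias gives $2^{J(2s+3)}=\pi^2$, which is the stated $\max(d,3/2)$ tuning and rate $\pi^{-2(s+1)/(s+3/2)}$.

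\textbf{Main obstacle.} The delicate point is Step 1 together with the $\mathfrak B^{-1}_{2,2}$ comparison for the clipped and renormalised estimator. We must check that on $\mathcal E$ the renormalisation error (a scalar of order the $\Phi$-coefficient error, hence $O(n^{-1}+\pi^{-2})$ in square) is absorbed. We must also check that the high-probability sup-norm bound holds uniformly over the $\asymp 2^{Jd}$ coefficients, via a union bound with logarithmic loss, in the regime $\min(n,\pi)\to\infty$. Finally, in $d=1$ the requirement $2^{Jd}\ll\pi$ is $\pi^{1/(s+3/2)}\ll\pi$, so it holds as well.
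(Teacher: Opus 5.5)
Your proposal is correct. It shares the paper's architecture: a good event on which the private estimator is bounded below and its normalising constant is close to $1$, a comparison of $W_2$ with a negative-order Besov norm, a bias/sampling/privacy split, and the same balancing of $J$. The key comparison inequality is different, though.

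The paper invokes Theorem 4 of \cite{NilesWeed2019Minimax} in the $\mathfrak B^{-1}_{2,1}$ norm, which is $\ell_1$ across scales. To turn $\bigl(\sum_j 2^{-j}\|\cdot\|_2\bigr)^2$ into something with a computable expectation, it needs a Cauchy--Schwarz step with a free exponent $\eta$, chosen in $(1-d/2,0)$, equal to $0$, or in $(0,1/2)$ according to $d$. You instead use the $p=2$-specific bound $W_2\lesssim\|\cdot\|_{\dot H^{-1}}\asymp\|\cdot\|_{\mathfrak B^{-1}_{2,2}}$ (Peyre's inequality, valid because both densities are bounded below on your event). That norm is $\ell_2$ across scales, so the expected squared error is just a sum of per-level variances and no $\eta$-trick is needed. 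As a bonus, you get a factor $J$ rather than $J^2$ in $d=2$. The paper's route has the advantage of resting on a comparison valid for general $W_p$.

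Your deterministic bound $\|\sum_{\xi\in\Psi_j}e_\xi\xi\|_\infty\le C_3C_4 2^{jd/2}\max_\xi|e_\xi|$, read directly off \Cref{ass:wavelet_scaling_support}, is also a simpler way to control the bad event than the covering argument behind \Cref{lemma:HP_coeffs_and_func_inequalities}. Your event also gives $|\int_\Omega\tilde f_{Z,\textrm{priv}}-1|<1/2$ for free, since $\gamma>1$.

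A few small corrections:
\begin{itemize}
\item The renormalisation error is not $O(n^{-1}+\pi_{n,\epsilon}^{-2})$ in square. The sampling part contributes exactly zero, because the empirical $\Phi$-coefficients integrate to $1$ when $\Phi$ spans constants. Every coefficient, including those in $\Phi$, carries Laplace noise calibrated to the full sensitivity $\Delta\asymp 2^{Jd/2}$, so this term is of order $2^{Jd}/\pi_{n,\epsilon}^2$, as in \Cref{lemma:control_normalization_factor}. That is exactly the $\Phi$-part of your term (iii), so it is still absorbed and your conclusion is unaffected.
\item The $\Phi$-term in your displayed $W_2$ bound should involve coefficient differences, not $\beta_\xi^2$.
\item In Step 1, what you need and have is $2^{J(d+s)}\lesssim\pi_{n,\epsilon}$, not $\asymp$.
\end{itemize}
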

\begin{proof}
    See \Cref{proof_of_th:utility_density_estimation}.
\end{proof}

 We recall that $\pi_{n \epsilon} := n \epsilon$ when working under central DP and $\pi_{n \epsilon} := \sqrt{n} \epsilon$ when working under local DP.

\section{Proofs of \Cref{sec:density_estimation_wasserstein}}
\label{proofs_of_sec:density_estimation_wasserstein}

\subsection{Proof of \Cref{prop:sensitivity}}
\label{proof_of_prop:sensitivity}

Let $z, z' \in \Omega$, we can first notice that 

\begin{equation}
    \begin{aligned}
        \|q(z) - q(z') \|_{1} &= \|(\xi(z); \xi \in \Phi \cup \bigcup_{j = j_0 }^{J} \Psi_j) - (\xi(z'); \xi \in \Phi \cup \bigcup_{j = j_0 }^{J} \Psi_j) \|_{1} \\
        &= \|(\xi(z);  \xi \in \Phi) - (\xi(z'); \xi \in \Phi) \|_{1} + \sum_{j =j_0 }^{J} \|(\xi(z);  \xi \in \Psi_j) - (\xi(z'); \xi \in \Psi_j) \|_{1} \;.
    \end{aligned}
\end{equation}

Then, for any $j_0 \leq j \leq J$, we can exploit the restricted support property of \Cref{ass:wavelet_scaling_support} which yields
\begin{equation}
    \begin{aligned}
        \|(\xi(z);  \xi \in \Psi_j) - (\xi(z'); \xi \in \Psi_j) \|_{1}
        &\leq \| \sum_{\xi \in \Psi_j} \1 (\cdot \in I_{\xi})\|_{\infty} 2 \sup_{\xi \in \Psi_j} \| \xi \|_{\infty} \\
        &\leq 2 C_3 \sup_{\xi \in \Psi_j} \|\xi \|_{\infty} \;,
    \end{aligned}
\end{equation}

and the scaling bound of \Cref{ass:wavelet_scaling_support} yields
\begin{equation}
    \begin{aligned}
        \|(\xi(z);  \xi \in \Psi_j) - (\xi(z'); \xi \in \Psi_j) \|_{1}
        &\leq 2 C_3 C_4 2^{jd/2} \;.
    \end{aligned}
\end{equation}

Similarly, $\|(\xi(z);  \xi \in \Phi) - (\xi(z'); \xi \in \Phi) \|_{1}$ is controlled as
\begin{equation}
    \begin{aligned}
        \|(\xi(z);  \xi \in \Phi) - (\xi(z'); \xi \in \Phi) \|_{1}
        &\leq 2|\Phi| \sup_{\xi \in \Phi}  \| \xi \|_{\infty} \\
        &\leq 2 C_1 C_2
    \end{aligned}
\end{equation}

Finally,
\begin{equation}
    \begin{aligned}
        \|q(z) - q(z') \|_{1} &\leq 2 C_1 C_2 + \sum_{j =j_0 }^{J} 2 C_3 C_4 2^{jd/2} \\
        &= 2 C_1 C_2 + 2 C_3 C_4  \sum_{j = j_0}^{J} 2^{jd/2} \\
        &=  2 C_1 C_2 + 2 C_3 C_4  \frac{2^{(J+ 1)d/2}-2^{j_0 d/2}}{2^{d/2}-1} \\
        &\leq 2 C_1 C_2 + 6 C_3 C_4 2^{(J+1)d/2}\;
    \end{aligned}
\end{equation}
where the last inequality comes from the fact that $(2^{d/2}-1)^{-1}\leq (\sqrt{2}-1)^{-1} \leq 3$.

\subsection{Proof of \Cref{th:privacy_guarantees_density}}
\label{proof_of_th:privacy_guarantees_density}

The claim about local DP is the simplest, as $\Delta = 2 C_1 C_2 + 6 C_3 C_4 2^{(J+1)d/2}$ is an upper bound on the $l_1$ sensitivity of the querying mechanism by \Cref{prop:sensitivity}. The claimed privacy follows from a direct application of the Laplace mechanism \cite{dwork2006calibrating}.

For the central DP case, we have to study the $l_1$ sensitivity of the query 
\begin{equation}
    \begin{aligned}\label{eq:query_central_dp}
        (Z_1, \dots, Z_n) \mapsto \frac{1}{n} \sum_{i=1}^n q(Z_i)
    \end{aligned}
\end{equation}
which is obtained by noticing that, without loss of generality (by symmetry of the expression),
\begin{equation}
    \begin{aligned}
        \abs*{\frac{1}{n} \paren*{\sum_{i=1}^{n-1} q(Z_i) + q(Z_n)} - \frac{1}{n} \paren*{\sum_{i=1}^{n-1} q(Z_i) + q(Z_n')}}
        &=
        \frac{1}{n} \abs*{ q(Z_n) - q(Z_n')} \\
        &\leq \frac{\Delta}{n}
    \end{aligned}
\end{equation}
where the last inequality follows from \Cref{prop:sensitivity}. 

Thus, the $l_1$ sensitivity of the query of \eqref{eq:query_central_dp} is upper bounded by $\frac{\Delta}{n}$. 
Then, the claimed DP guarantees in the central DP case follows from a simple application of the Laplace mechanism \cite{dwork2006calibrating,dwork2014algorithmic} to this deterministic query.

\subsection{Proof of \Cref{th:utility_density_estimation}}
\label{proof_of_th:utility_density_estimation}

Let us define the event  $A = \bigl\{ $ $ \tilde{f}_{Z, J, \textrm{priv} } \geq \gamma^{-1}/2 $ pointwise  and $1 - a \leq  1 /\int_{\Omega} \tilde{f}_{Z, J, \textrm{priv} } \leq 1 + a$  $\bigr\}$ where $a$ is a fixed positive constant strictly smaller than $1$.

\paragraph{Risk decomposition.}

We can write 
\begin{equation}
    \begin{aligned}
        \E*{W_2^2(\hat{f}_{Z, J, \textrm{priv} }, {f}_{Z})}
        =
        \E*{W_2^2(\hat{f}_{Z, J, \textrm{priv} }, {f}_{Z}) \1_{A}}
        +
        \E*{W_2^2(\hat{f}_{Z, J, \textrm{priv} }, {f}_{Z}) \1_{A^c}} \;.
    \end{aligned}
\end{equation}
Since $\Omega$ is compact, the second term is easily tractable as 
\begin{equation}
    \E*{W_2^2(\hat{f}_{Z, J, \textrm{priv} }, {f}_{Z}) \1_{A^c}}
    \lesssim
    \PP*{A^c} 
\end{equation}
and  $  \PP*{A^c} $ is negligible compared to the rate we show in this proof  (in local or central DP) by \Cref{lemma:control_infty_normalizing}.

The first term can be controlled by first using Theorem 4 from \cite{NilesWeed2019Minimax} which gives
\begin{equation}
    \E*{W_2^2(\hat{f}_{Z, J, \textrm{priv} }, {f}_{Z}) \1_{A}}
    \lesssim
    \E*{\| \hat{f}_{Z, J, \textrm{priv} } - f_Z\|_{{\mathcal{B}}_{2, 1}^{-1}(\Omega)}^2 \1_{A}}\;,
\end{equation}
which in turn can be controlled by using the event $A$ as
\begin{equation}
\begin{aligned}
    &\E*{W_2^2(\hat{f}_{Z, J, \textrm{priv} }, {f}_{Z}) \1_{A}} \\
    & \quad\lesssim
    \E*{\| \frac{1}{\int_{\Omega} \tilde{f}_{Z, J, \textrm{priv} }} \tilde{f}_{Z, J, \textrm{priv} } - f_Z\|_{{\mathfrak{B}}_{2, 1}^{-1}(\Omega)}^2 \1_{A}} \\
    &\quad=
    \E*{\paren*{\| \frac{1}{\int_{\Omega}  \tilde{f}_{Z, J, \textrm{priv} }} \paren*{\tilde{f}_{Z, J, \textrm{priv} } - f_Z} - \paren*{1- \frac{1}{\int_{\Omega} \tilde{f}_{Z, J, \textrm{priv} }} }  f_Z \|_{{\mathfrak{B}}_{2, 1}^{-1}(\Omega)}^2 }\1_{A}} \\
    &\quad\lesssim
    \E*{ \paren*{(1 + a)^2 \|\tilde{f}_{Z, J, \textrm{priv} } -  f_Z \|_{{\mathfrak{B}}_{2, 1}^{-1}(\Omega)}^2} \1_{A}} + \\  &\qquad\qquad\qquad\qquad \E*{ (1+a)^2 \paren*{ \paren*{\int_{\Omega} \tilde{f}_{Z, J, \textrm{priv} } - 1}^2 \|f_Z\|_{{\mathfrak{B}}_{2, 1}^{-1}(\Omega)}^2 }\1_{A}} \\
    &\quad\lesssim
    \E*{ \|\tilde{f}_{Z, J, \textrm{priv} } -  f_Z \|_{{\mathfrak{B}}_{2, 1}^{-1}(\Omega)}^2}
    +
    \E*{ \paren*{\int_{\Omega} \tilde{f}_{Z, J, \textrm{priv} } - 1}^2}
    \;.
\end{aligned}
\end{equation}

The term $\E*{ \paren*{\int_{\Omega} \tilde{f}_{Z, J, \textrm{priv} } - 1}^2}$ is controlled using \Cref{lemma:control_normalization_factor}, and by the rest of the analysis is always smaller than $\E*{ \|\tilde{f}_{Z, J, \textrm{priv} } -  f_Z \|_{{\mathfrak{B}}_{2, 1}^{-1}(\Omega)}^2}$ up to a multiplicative constant.

For the term $\E*{ \|\tilde{f}_{Z, J, \textrm{priv} } -  f_Z \|_{{\mathfrak{B}}_{2, 1}^{-1}(\Omega)}^2}$, we can use the following decomposition.

\begin{equation}
    \begin{aligned}
        &\| \tilde{f}_{Z,J, \textrm{priv}} - f_Z\|_{{\mathfrak{B}}_{2, 1}^{-1}(\Omega)}^2 \\
        &\quad\lesssim 
        \underbrace{\| \tilde{f}_{Z,J, \textrm{priv}} - \tilde{f}_{Z,J}\|_{{\mathfrak{B}}_{2, 1}^{-1}(\Omega)}^2 }_{\text{Privacy noise}}
        + 
        \underbrace{\| \tilde{f}_{Z,J} - f_{Z,J}\|_{{\mathfrak{B}}_{2, 1}^{-1}(\Omega)}^2}_{\text{Sampling noise}}
        + 
        \underbrace{\| {f}_{Z,J} - f_{Z}\|_{{\mathfrak{B}}_{2, 1}^{-1}(\Omega)}^2}_{\text{Bias}}
    \end{aligned}
\end{equation}

\paragraph{Control of the bias.}

By Lemma 33 in \cite{Manole2024Plugin}, since $f_{Z}$ satisfies $\frac{1}{\gamma} \leq f_Z \leq \gamma$ on $\Omega$ and since $f_Z \in {{\mathfrak{B}}_{\infty, \infty}^{s}(\Omega)}$,
\begin{equation}
    \begin{aligned}
        \| {f}_{Z,J} - f_{Z}\|_{{\mathfrak{B}}_{2, 1}^{-1}(\Omega)}^2 
        \lesssim
        2^{-2J(s+1)}
    \end{aligned}
\end{equation}

\paragraph{Control of the sampling noise.}

The sampling noise is controlled as in \cite{Manole2024Plugin}. We rewrite the analysis adapted to the notations of the article below for completeness. 
For any $\eta \in \R$,
\begin{equation}
\begin{aligned}
    &\| \tilde{f}_{Z, J} - {f}_{Z,J}\|_{{\mathfrak{B}}_{2, 1}^{-1}(\Omega)}^2 \\
    &= \| (\hat{\beta}_{\xi} - {\beta}_{\xi};  \xi \in \Phi) \|_2^2
    + \paren*{\sum_{j = j_0}^J 2^{-j} \|(\hat{\beta}_{\xi} - {\beta}_{\xi};  \xi \in \Psi_j) \|_{2}}^2 \\
    &\overset{\text{Cauchy-Schwarz}}{\leq }
    \| (\hat{\beta}_{\xi} - {\beta}_{\xi};  \xi \in \Phi) \|_2^2
    + \paren*{\sum_{j = j_0}^J 2^{2 (\eta - 1)j} \|(\hat{\beta}_{\xi} - {\beta}_{\xi};  \xi \in \Psi_j) \|_{2}^2}
    \paren*{\sum_{j = j_0}^J 2^{-2 \eta j} } \;.
\end{aligned}
\end{equation}
Then, we use again Lemma 33 in \cite{Manole2024Plugin} which first guarantees that
\begin{equation}
    \E*{\| (\hat{\beta}_{\xi} - {\beta}_{\xi};  \xi \in \Phi) \|_2^2 }
    \lesssim
    \frac{1}{n}
\end{equation}
and Proposition 4 from \cite{NilesWeed2019Minimax}, or Lemma 33 from \cite{Manole2024Plugin} guarantee that  
\begin{equation}
    \E*{\|(\hat{\beta}_{\xi} - {\beta}_{\xi};  \xi \in \Psi_j) \|_{2}^2 }
    \lesssim
    (2^{j d /2} / \sqrt{n})^2
\end{equation}
for any $j \geq j_0$.

Overall, this analysis yields that for any $\eta \in \R$,
\begin{equation}
\begin{aligned}
    \E*{\| \tilde{f}_{Z, J} - {f}_{Z,J}\|_{{\mathfrak{B}}_{2, 1}^{-1}(\Omega)}^2 }
    &\overset{}{\leq }
    \frac{1}{n} \paren*{1 + \paren*{\sum_{j = j_0}^J 2^{2 (\eta +\frac{d}{2} - 1)j} }
    \paren*{\sum_{j = j_0}^J 2^{-2 \eta j} } }\;.
\end{aligned}
\end{equation}

\paragraph{Control of the privacy noise, central DP case.}

In the central DP case, a similar decomposition yields that for any $\eta \in \R$
\begin{equation}
\begin{aligned}
    &\| \tilde{f}_{Z, \textrm{priv}} - \tilde{f}_{Z}\|_{{\mathfrak{B}}_{2, 1}^{-1}(\Omega)}^2 \\
    &= \| (\hat{\beta}_{\xi, \textrm{priv}} - \hat{\beta}_{\xi};  \xi \in \Phi) \|_2^2
    + \paren*{\sum_{j = j_0}^J 2^{-j} \|(\hat{\beta}_{\xi, \textrm{priv}} - \hat{\beta}_{\xi};  \xi \in \Psi_j) \|_{2}}^2 \\
    &\overset{\text{Cauchy-Schwarz}}{\leq }
    \| (\hat{\beta}_{\xi, \textrm{priv}} - \hat{\beta}_{\xi};  \xi \in \Phi) \|_2^2
    + \paren*{\sum_{j = j_0}^J 2^{-2 (\eta - 1)j} \|(\hat{\beta}_{\xi, \textrm{priv}} - \hat{\beta}_{\xi};  \xi \in \Psi_j) \|_{2}^2}
    \paren*{\sum_{j = j_0}^J 2^{-2 \eta j} } \;.
\end{aligned}
\end{equation}

We recall that in the central DP case, the $\hat{\beta}_{\xi, \textrm{priv}} - \hat{\beta}_{\xi}$'s are i.i.d with distribution $\frac{\Delta}{n \epsilon} \mathcal{L} (1)$. 

As a consequence, one has
\begin{equation}
    \begin{aligned}
        \E*{\| (\hat{\beta}_{\xi, \textrm{priv}} - \hat{\beta}_{\xi};  \xi \in \Phi) \|_2^2}
        &= \sum_{\xi \in \Phi} \E*{(\hat{\beta}_{\xi, \textrm{priv}} - \hat{\beta}_{\xi})^2} \\
        &\lesssim |\Phi| \frac{\Delta^2}{n^2 \epsilon^2} \\
        &\lesssim \frac{\Delta^2}{n^2 \epsilon^2}
    \end{aligned}
\end{equation}
where the last line follows from \Cref{ass:wavelet_scaling_support}, and for any $j_0, \leq j \leq J$,
\begin{equation}
    \begin{aligned}
        \E*{\| (\hat{\beta}_{\xi, \textrm{priv}} - \hat{\beta}_{\xi};  \xi \in \Psi_i) \|_2^2}
        &= \sum_{\xi \in \Psi_j} \E*{(\hat{\beta}_{\xi, \textrm{priv}} - \hat{\beta}_{\xi})^2} \\
        &\lesssim |\Psi_j| \frac{\Delta^2}{n^2 \epsilon^2} \\
        &\lesssim 2^{j d} \frac{\Delta^2}{n^2 \epsilon^2}
    \end{aligned}
\end{equation}
where the last line again follows from \Cref{ass:wavelet_scaling_support}.

Hence,
\begin{equation}
\begin{aligned}
    &\E*{\| \tilde{f}_{Z, \textrm{priv}} - \tilde{f}_{Z}\|_{{\mathfrak{B}}_{2, 1}^{-1}(\Omega)}^2} \\
    &\overset{}{\leq }
    \E*{\| (\hat{\beta}_{\xi, \textrm{priv}} - \hat{\beta}_{\xi};  \xi \in \Phi) \|_2^2}
    + \paren*{\sum_{j = j_0}^J 2^{2 (\eta - 1)j} \E*{\|(\hat{\beta}_{\xi, \textrm{priv}} - \hat{\beta}_{\xi};  \xi \in \Psi_j) \|_{2}^2}}
    \paren*{\sum_{j = j_0}^J 2^{-2 \eta j} } \\
    &\lesssim
     \frac{\Delta^2}{n^2 \epsilon^2}
    + \paren*{\sum_{j = j_0}^J 2^{2 (\eta - 1)j} 2^{j d} \frac{\Delta^2}{n^2 \epsilon^2}}
    \paren*{\sum_{j = j_0}^J 2^{-2 \eta j} } \\
    &\lesssim
    \frac{2^{(J+1)d}}{n^2 \epsilon^2}
    + \paren*{\sum_{j = j_0}^J 2^{2 (\eta - 1)j} 2^{j d} \frac{2^{(J+1)d}}{n^2 \epsilon^2}}
    \paren*{\sum_{j = j_0}^J 2^{-2 \eta j} } \\
    &=
    \frac{2^{(J+1)d}}{n^2 \epsilon^2} \paren*{1 
    + \paren*{\sum_{j = j_0}^J 2^{2 (\eta + \frac{d}{2} - 1)j} }
    \paren*{\sum_{j = j_0}^J 2^{-2 \eta j} }} \;.
\end{aligned}
\end{equation}
where the upper-bound on $\Delta$ follows from \Cref{prop:sensitivity}.

\paragraph{Control of the privacy noise, local DP case.}

Again, in the central DP case, the following decomposition still holds for any $\eta \in \R$
\begin{equation}
\begin{aligned}
    &\| \tilde{f}_{Z, \textrm{priv}} - \tilde{f}_{Z}\|_{{\mathfrak{B}}_{2, 1}^{-1}(\Omega)}^2 \\
    &\overset{\text{}}{\leq }
    \| (\hat{\beta}_{\xi, \textrm{priv}} - \hat{\beta}_{\xi};  \xi \in \Phi) \|_2^2
    + \paren*{\sum_{j = j_0}^J 2^{-2 (\eta - 1)j} \|(\hat{\beta}_{\xi, \textrm{priv}} - \hat{\beta}_{\xi};  \xi \in \Psi_j) \|_{2}^2}
    \paren*{\sum_{j = j_0}^J 2^{-2 \eta j} } \;.
\end{aligned}
\end{equation}

We recall that in the local DP case, the $\hat{\beta}_{\xi, \textrm{priv}} - \hat{\beta}_{\xi}$'s are i.i.d with distribution $\frac{1}{n } \sum_{i=1}^n \frac{\Delta}{\epsilon} \mathcal{L} (1)$. 

As a consequence, one has
\begin{equation}
    \begin{aligned}
        \E*{\| (\hat{\beta}_{\xi, \textrm{priv}} - \hat{\beta}_{\xi};  \xi \in \Phi) \|_2^2}
        &= \sum_{\xi \in \Phi} \E*{(\hat{\beta}_{\xi, \textrm{priv}} - \hat{\beta}_{\xi})^2} \\
        &\lesssim |\Phi| \frac{\Delta^2}{n \epsilon^2} \\
        &\lesssim \frac{\Delta^2}{n \epsilon^2}
    \end{aligned}
\end{equation}
where the last line follows from \Cref{ass:wavelet_scaling_support}, and for any $j_0, \leq j \leq J$,
\begin{equation}
    \begin{aligned}
        \E*{\| (\hat{\beta}_{\xi, \textrm{priv}} - \hat{\beta}_{\xi};  \xi \in \Psi_i) \|_2^2}
        &= \sum_{\xi \in \Psi_j} \E*{(\hat{\beta}_{\xi, \textrm{priv}} - \hat{\beta}_{\xi})^2} \\
        &\lesssim |\Psi_j| \frac{\Delta^2}{n \epsilon^2} \\
        &\lesssim 2^{j d} \frac{\Delta^2}{n \epsilon^2}
    \end{aligned}
\end{equation}
where the last line again follows from \Cref{ass:wavelet_scaling_support}.

Hence,
\begin{equation}
\begin{aligned}
    \E*{\| \tilde{f}_{Z, \textrm{priv}} - \tilde{f}_{Z}\|_{{\mathfrak{B}}_{2, 1}^{-1}(\Omega)}^2} 
    &\lesssim
    \frac{2^{(J+1)d}}{n \epsilon^2} \paren*{1 
    + \paren*{\sum_{j = j_0}^J 2^{2 (\eta + \frac{d}{2} - 1)j} }
    \paren*{\sum_{j = j_0}^J 2^{-2 \eta j} }} \;.
\end{aligned}
\end{equation}
where the upper-bound on $\Delta$ follows from \Cref{prop:sensitivity}. 

\paragraph{Conclusion of the proof.}

Putting the pieces together, we get that (both with central and local DP)
\begin{equation}
    \begin{aligned}
        &\E*{\| \tilde{f}_{Z,J, \textrm{priv}} - f_Z\|_{{\mathfrak{B}}_{2, 1}^{-1}(\Omega)}^2} \\
        &\quad \lesssim 
        \paren*{\frac{1}{n} + \frac{2^{Jd}}{\pi_{n,\epsilon}^2}} \paren*{1 
    + \paren*{\sum_{j = j_0}^J 2^{2 (\eta + \frac{d}{2} - 1)j} }
    \paren*{\sum_{j = j_0}^J 2^{-2 \eta j} }}
    + 2^{-2J(s+1)} \;.
    \end{aligned}
\end{equation}

When $d \geq 3$, choosing $ 1 - \frac{d}{2} <\eta < 0$ yields 
\begin{equation}
    \begin{aligned}
        \E*{\| \tilde{f}_{Z,J, \textrm{priv}} - f_Z\|_{{\mathfrak{B}}_{2, 1}^{-1}(\Omega)}^2}
        &\lesssim 
        \paren*{\frac{1}{n} + \frac{2^{Jd}}{\pi_{n,\epsilon}^2}} 2^{(d-2)J}
    + 2^{-2J(s+1)} \\
    &\lesssim 
    \max \paren*{n^{- \frac{2 (s + 1)}{ 2 s + d}}, (\pi_{n,\epsilon})^{- \frac{2 (s + 1)}{ s + d}}}\;,
    \end{aligned}
\end{equation}
since $2^J \asymp \min \paren*{n^{\frac{1}{d + 2s}}, (\pi_{n,\epsilon})^{\frac{1}{d+s}}}$.

When $d = 2$, we can choose $\eta = 0$ which yields
\begin{equation}
    \begin{aligned}
        \E*{\| \tilde{f}_{Z,J, \textrm{priv}} - f_Z\|_{{\mathfrak{B}}_{2, 1}^{-1}(\Omega)}^2}
        &\lesssim 
        \paren*{\frac{1}{n} + \frac{2^{Jd}}{\pi_{n,\epsilon}^2}} J^2
    + 2^{-2J(s+1)} \\
    &\lesssim 
    \mathrm{polylog}(n, \epsilon)
    \max \paren*{n^{- 1 }, (\pi_{n,\epsilon})^{- \frac{2 (s + 1)}{ s + d}}}\;.
    \end{aligned}
\end{equation}

When $d=1$, we take $0 < \eta < 1/2$. In this case,
\begin{equation}
    \begin{aligned}
        \E*{\| \tilde{f}_{Z,J, \textrm{priv}} - f_Z\|_{{\mathfrak{B}}_{2, 1}^{-1}(\Omega)}^2}
        &\lesssim 
        \frac{1}{n} + \frac{2^{Jd}}{\pi_{n,\epsilon}^2}
    + 2^{-2J(s+1)} \\
    &\lesssim 
    \max \paren*{n^{- 1 }, (\pi_{n,\epsilon})^{- \frac{2 (s + 1)}{ s + 3/2}}}\;.
    \end{aligned}
\end{equation}
since $2^J \asymp \min \paren*{n^{\frac{1}{d + 2s}}, (\pi_{n,\epsilon})^{\frac{2}{2s + 3}}}$.

\subsection{Auxiliary results for \Cref{sec:density_estimation_wasserstein}}

\begin{lemma}
\label{lemma:maximumlaplace}
    Let $L_1, \dots, L_{M}$ be $M$ independent random variables with Laplace distribution (that is, with density $t \mapsto \frac{1}{2} e^{- |t|}$ w.r.t. Lebesgue's measure). Then 
    \begin{equation}
        \PP*{\max_{i = 1 , \dots, M} |L_i| > \delta  } = 1 - \paren{1 - e^{-\delta}}^M \leq  \min(1, M e^{-\delta})  \;.
    \end{equation}
\end{lemma}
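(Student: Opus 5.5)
We compute the distribution of the maximum directly, using independence, and then bound the result by an elementary inequality.

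\textbf{Step 1: the single-variable tail.} For one Laplace variable $L$ with density $t \mapsto \frac12 e^{-|t|}$, symmetry gives, for $\delta \geq 0$,
\[
\PP*{|L| > \delta} = 2 \int_\delta^{\infty} \tfrac12 e^{-t}\,dt = e^{-\delta}.
\]
Hence $\PP*{|L| \leq \delta} = 1 - e^{-\delta}$.

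\textbf{Step 2: the exact formula for the maximum.} The event $\{\max_i |L_i| \leq \delta\}$ is the intersection of the events $\{|L_i| \leq \delta\}$. By independence,
\[
\PP*{\max_{i} |L_i| \leq \delta} = \paren{1 - e^{-\delta}}^M .
\]
Passing to the complement gives the claimed equality $\PP*{\max_i |L_i| > \delta} = 1 - \paren{1-e^{-\delta}}^M$.

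\textbf{Step 3: the upper bound by $1$.} This is immediate, since $(1-e^{-\delta})^M \geq 0$, so the probability is at most $1$.

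\textbf{Step 4: the upper bound by $M e^{-\delta}$.} Here I would use either of two routes.
\begin{itemize}
\item The union bound: $\PP*{\max_i |L_i| > \delta} \leq \sum_{i=1}^M \PP*{|L_i| > \delta} = M e^{-\delta}$.
\item Bernoulli's inequality $(1-x)^M \geq 1 - Mx$ for $x = e^{-\delta} \in [0,1]$ and integer $M \geq 1$, which gives $1 - (1-x)^M \leq Mx$.
\end{itemize}

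Combining Steps 3 and 4 yields the minimum. There is no real obstacle in this argument. The only point to note is that the statement implicitly assumes $\delta \geq 0$; for $\delta < 0$ the probability is trivially $1$ and the left-hand formula does not apply.
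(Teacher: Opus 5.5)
Your proof is correct. Steps 1--2 match the paper's argument: you compute $\PP*{\max_i |L_i|\le\delta}$ by independence and then take the complement.

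For the upper bound you take a different route from the paper, and yours is the sound one. The paper goes through an exponential intermediate. It writes $1-(1-e^{-\delta})^M \le 1-e^{Me^{-\delta}}$, presumably meaning $1-e^{-Me^{-\delta}}$, justifies this with $1-x\le e^{-x}$, and then applies $1-e^{-x}\le x$. But $1-x\le e^{-x}$ gives $(1-e^{-\delta})^M \le e^{-Me^{-\delta}}$, which is a \emph{lower} bound $1-(1-e^{-\delta})^M \ge 1-e^{-Me^{-\delta}}$. So that intermediate step goes the wrong way: already for $M=1$ it would assert $x\le 1-e^{-x}$ with $x=e^{-\delta}$.

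Your union bound, or equivalently Bernoulli's inequality $(1-x)^M\ge 1-Mx$ for $x=e^{-\delta}\in[0,1]$, gives $\le \min(1, Me^{-\delta})$ directly and correctly. The union bound has the added advantage of not needing independence for this half of the statement. Your remark that the exact formula requires $\delta\ge 0$ is also right; the paper's proof assumes this tacitly.
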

\begin{proof}
Let $\delta \geq 0$, by independence,
\begin{equation}
\begin{aligned}
    \PP*{\max_{i = 1 , \dots, M} |L_i| \leq \delta } 
    &= \PP*{\bigcap_{i = 1 , \dots, M} (|L_i| \leq \delta) } \\
    &= \prod_{i = 1}^M \PP*{ (|L_i| \leq \delta) } \\
    &= \paren{1 - e^{-\delta}}^M \;.
\end{aligned}
\end{equation}
Thus, 
\begin{equation}
\begin{aligned}
    \PP*{\max_{i = 1 , \dots, M} |L_i| > \delta  }
    &=
    1 - \paren{1 - e^{-\delta}}^M \\
    &\leq 1 - e^{Me^{-\delta}} \\
    &\leq \min(1, M e^{-\delta}) \;.
\end{aligned}
\end{equation}
Between the first and second line, we used that $1-x \leq e^{-x}$ and between the second and last line, we used that $1-e^{-x} \leq x$.
\end{proof}

\begin{lemma}
\label{lemma:tail_bound_sum_of_laplace}
    Let $v=(v_1,\dots,v_n)$ be a vector of independent, centered Laplace random variables. Let $a=(a_1,\dots,a_n)\in\mathbb R^n$ be a deterministic vector and define
\begin{equation}
    S := \langle a, v\rangle = \sum_{i=1}^n a_i v_i .
\end{equation}
Then, for all $t\geq 0$,
\begin{equation}
    \PP{|S|\geq t}
    \leq
    2\exp \paren{-\frac{t^2}{4\|a\|_2^2 + 2\|a\|_\infty t}} \;.
\end{equation}
Consequently,
\begin{equation}
    \PP{|S|\geq t}
    \leq
    2\exp \paren*{-\min\left\{
\frac{t^2}{8\|a\|_2^2},
\frac{t}{4\|a\|_\infty}
\right\}} \;.
\end{equation}
\end{lemma}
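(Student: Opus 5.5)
We would prove this Bernstein-type bound by the Chernoff method applied to the moment generating function of $S$. We take the $v_i$ to be standard Laplace with density $t\mapsto \frac12 e^{-|t|}$, as in \Cref{lemma:maximumlaplace}; general scales are absorbed into $a$.

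\textbf{Step 1: moment generating function.} For $|\lambda|<1$, a direct computation gives $\E{e^{\lambda v_i}} = \frac{1}{1-\lambda^2}$. We then use the elementary bound $-\log(1-x)\le 2x$ for $0\le x\le 1/2$. Hence, for $|\lambda|\le 1/\sqrt{2}$,
\begin{equation}
\log \E{e^{\lambda v_i}} \le 2\lambda^2 .
\end{equation}
This is the sub-exponential structure: variance proxy $2$, and an admissible range of $\lambda$ of order one.

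\textbf{Step 2: Chernoff bound.} Fix $\lambda\ge 0$ with $\lambda\|a\|_\infty\le 1/\sqrt2$. By independence,
\begin{equation}
\log\E{e^{\lambda S}} \le \sum_i 2\lambda^2 a_i^2 = 2\lambda^2\|a\|_2^2 ,
\end{equation}
so that
\begin{equation}
\PP{S\ge t}\le \exp\paren{-\lambda t + 2\lambda^2\|a\|_2^2}.
\end{equation}

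\textbf{Step 3: choice of $\lambda$ (the main obstacle).} The delicate step is choosing $\lambda$ so as to obtain exactly the stated denominator $4\|a\|_2^2+2\|a\|_\infty t$. I would take
\begin{equation}
\lambda = \frac{t}{4\|a\|_2^2+2\|a\|_\infty t}.
\end{equation}
This choice is admissible, since $\lambda\|a\|_\infty\le 1/2\le 1/\sqrt2$. Moreover $4\lambda\|a\|_2^2\le t$, because $4\|a\|_2^2 \le 4\|a\|_2^2 + 2\|a\|_\infty t$. Therefore $2\lambda^2\|a\|_2^2\le \lambda t/2$, and the exponent is at most
\begin{equation}
-\frac{\lambda t}{2} = -\frac{t^2}{2(4\|a\|_2^2+2\|a\|_\infty t)} .
\end{equation}
This falls short of the claim by a factor $2$ in the exponent, which is the expected hard point. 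To recover the constant, I would sharpen Step 1 to $\frac{1}{1-x}\le e^{x/(1-x)}$, which gives
\begin{equation}
\log\E{e^{\lambda v_i}}\le \frac{\lambda^2}{1-\lambda^2} \le \frac{\lambda^2}{1-|\lambda|},
\end{equation}
that is, the Bernstein condition with variance $2$ and scale $1$. The standard Bernstein inequality for sums of such variables then yields
\begin{equation}
\PP{S\ge t}\le \exp\paren{-\frac{t^2}{2(2\|a\|_2^2+\|a\|_\infty t)}},
\end{equation}
which is exactly the claimed exponent. Concretely, one plugs in $\lambda = t/(2\|a\|_2^2+\|a\|_\infty t)$ and checks the algebra, using $\lambda^2a_i^2/(1-\lambda|a_i|)$ summed over $i$ and bounded by $\lambda^2\|a\|_2^2/(1-\lambda\|a\|_\infty)$. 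Applying the same bound to $-S$, which has the same law by symmetry of the Laplace distribution, and taking a union bound gives the factor $2$.

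\textbf{Step 4: consequence.} Since $4\|a\|_2^2+2\|a\|_\infty t\le 2\max\{4\|a\|_2^2, 2\|a\|_\infty t\}$, we have
\begin{equation}
\frac{t^2}{4\|a\|_2^2+2\|a\|_\infty t}\ge \min\left\{\frac{t^2}{8\|a\|_2^2},\frac{t}{4\|a\|_\infty}\right\},
\end{equation}
which yields the second display.
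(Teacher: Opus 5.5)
Your final argument is correct and is essentially the paper's proof. You bound $\log \mathbb{E}[e^{\lambda v_i}] = -\log(1-\lambda^2 a_i^2)$ (with $\lambda$ rescaled by $a_i$) using $-\log(1-u)\le u/(1-u)$, which gives $\log\mathbb{E}[e^{\lambda S}]\le \lambda^2\|a\|_2^2/(1-\lambda\|a\|_\infty)$; you then take $\lambda = t/(2\|a\|_2^2+\|a\|_\infty t)$, use symmetry for the factor $2$, and split on the dominant term of the denominator. The initial $-\log(1-x)\le 2x$ route in Steps 1--3 is an unnecessary detour: it only gives the exponent halved and can be dropped.
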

\begin{proof}
Since the $v_i$'s are independent and symmetric,
it suffices to control $\mathbb P(S\geq t)$.

For $|\lambda|<1$, the Laplace distribution satisfies
\begin{equation}
    \E{e^{\lambda v_i}} = \int_{\R} e^{\lambda x} \frac{1}{2} e^{-|x|} dx  = \frac{1}{1-\lambda^2}
\end{equation}
Therefore, for any $\lambda$ such that $|\lambda a_i|<1$ for all $i$,
\begin{equation}
    \E{e^{\lambda S}}
    = \prod_{i=1}^n  \E*{e^{\lambda a_i v_i}} = \prod_{i=1}^n \frac{1}{1-\lambda^2 a_i^2}.
\end{equation}

Using the inequality $-\log(1-u)\le \frac{u}{1-u}$ for $u\in[0,1)$,
we obtain
\begin{equation}
    \log \E{e^{\lambda S}} 
    \leq 
    \sum_{i=1}^n \frac{\lambda^2 a_i^2}{1-\lambda^2 a_i^2}
    \leq 
    \frac{\lambda^2 \|a\|_2^2}{1-\lambda\|a\|_\infty},
\end{equation}
for any $0\le \lambda < \|a\|_\infty^{-1}$.

For such $\lambda$, Markov's inequality yields
\begin{equation}
    \PP{S\geq t}
    \leq 
    \exp \paren*{-\lambda t + \frac{\lambda^2 \|a\|_2^2}{1-\lambda\|a\|_\infty}} \;.
\end{equation}

Choose
\begin{equation}
    \lambda^\star
=
\frac{t}{2\|a\|_2^2 + \|a\|_\infty t}
< \|a\|_\infty^{-1}.
\end{equation}

A direct computation yields
\begin{equation}
    -\lambda^\star t
+
\frac{\lambda^{\star 2}\|a\|_2^2}{1-\lambda^\star\|a\|_\infty}
=
-\frac{t^2}{4\|a\|_2^2 + 2\|a\|_\infty t}.
\end{equation}

Hence,
\begin{equation}
    \PP{S\geq t}
    \leq 
    \exp \paren*{-\frac{t^2}{4\|a\|_2^2 + 2\|a\|_\infty t}} \;.
\end{equation}

Since $S$ is symmetric,
\begin{equation}
    \PP{|S|\geq t}
    \leq 
    2 \PP{S \geq t} \;,
\end{equation}
which gives the first inequality.

If $t\le 2\|a\|_2^2/\|a\|_\infty$, then
\begin{equation}
    4\|a\|_2^2 + 2\|a\|_\infty t \le 8\|a\|_2^2.
\end{equation}
If $t\ge 2\|a\|_2^2/\|a\|_\infty$, then
\begin{equation}
    4\|a\|_2^2 + 2\|a\|_\infty t \le 4\|a\|_\infty t.
\end{equation}
This yields the final result.
\end{proof}

\begin{lemma}
    \label{lemma:control_normalization_factor}
    If $\Phi$ spanss constant functions and if $\Psi$ is an orthonormal family of $L^2(\Omega)$, then for any $\delta \geq 0$, the estimator built with \Cref{alg:density_estimation_centralDP} satisfies
    \begin{equation}
        \begin{aligned}
            \PP*{\abs*{\int_{\Omega} \tilde{f}_{Z, J, \textrm{priv} } - 1} \geq \delta} \lesssim_{\Psi} 
            e^{- C \frac{n \epsilon}{\Delta} \delta}\;,
        \end{aligned}
    \end{equation}
    and
    \begin{equation}
    \begin{aligned}
        \E*{\abs*{\int_{\Omega} \tilde{f}_{Z, J, \textrm{priv} } - 1}^2} 
       \lesssim_{\Psi} \frac{\Delta^2}{(n \epsilon)^2} \;,
    \end{aligned}
\end{equation}
    and the estimator built with \Cref{alg:density_estimation_localDP} satisfies
    \begin{equation}
        \begin{aligned}
            \PP*{\abs*{\int_{\Omega} \tilde{f}_{Z, J, \textrm{priv} } - 1} \geq \delta} \lesssim_{\Psi} 
            e^{- C \min \paren*{ \frac{n \epsilon^2 \delta^2}{\Delta^2}, \frac{n \epsilon \delta}{\Delta}} }\;,
        \end{aligned}
    \end{equation}
    and
    \begin{equation}
    \begin{aligned}
        \E*{\abs*{\int_{\Omega} \tilde{f}_{Z, J, \textrm{priv} } - 1}^2} 
        &\lesssim_{\Psi} \frac{\Delta^2}{n \epsilon^2} \;,
    \end{aligned}
\end{equation}
    for a positive constant $C$ depending only on $\Psi$.
    In particular, this result applies if $\Psi$ is the boundary corrected Daubechies Wavelet system thanks to Lemma 26 of \cite{Manole2024Plugin}. 
\end{lemma}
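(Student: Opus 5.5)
The key observation is that $\int_\Omega \tilde{f}_{Z,J,\textrm{priv}} - 1$ is a deterministic linear functional of the Laplace noise alone. The sampling error drops out.

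To see this, use orthonormality of $\Psi$ and the fact that constants lie in $\mathrm{span}(\Phi)$. Write $\1_\Omega = \sum_{\xi \in \Phi} c_\xi \xi$ with fixed coefficients $c = (c_\xi)$ depending only on $\Psi$. Then for every $\xi \in \Psi_j$ we get $\int_\Omega \xi = \ip{\xi}{\1_\Omega} = 0$, by orthogonality of $\Psi_j$ to $\mathrm{span}(\Phi)$. For $\xi \in \Phi$ we get $\int_\Omega \xi = c_\xi$. Hence
\begin{equation*}
\int_\Omega \tilde{f}_{Z,J,\textrm{priv}} = \sum_{\xi\in\Phi} c_\xi \hat\beta_{\xi,\textrm{priv}} .
\end{equation*}
The non-private part is $\sum_\xi c_\xi \hat\beta_\xi = \frac1n\sum_i \sum_\xi c_\xi \xi(Z_i) = \frac1n \sum_i \1_\Omega(Z_i) = 1$ exactly, since $Z_i\in\Omega$. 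Therefore
\begin{equation*}
\int_\Omega \tilde{f}_{Z,J,\textrm{priv}} - 1 = \sum_{\xi \in \Phi} c_\xi (\hat\beta_{\xi,\textrm{priv}} - \hat\beta_\xi).
\end{equation*}
This is a weighted sum of at most $C_1$ independent noise terms, with weights bounded by a constant depending on $\Psi$.

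In the central case, each noise term is $\frac{\Delta}{n\epsilon} L_\xi$ with $L_\xi$ standard Laplace. The quantity is therefore $\ip{a}{v}$ with $a = \frac{\Delta}{n\epsilon} c$, so that $\|a\|_2,\|a\|_\infty \lesssim_\Psi \Delta/(n\epsilon)$. Apply \Cref{lemma:tail_bound_sum_of_laplace}, or more crudely a union bound with \Cref{lemma:maximumlaplace} over the at most $C_1$ coordinates, using $|\sum c_\xi L_\xi| \le \|c\|_1 \max|L_\xi|$. This gives $\PP{|\cdot| \ge \delta} \lesssim e^{-C n\epsilon\delta/\Delta}$. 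The second-moment bound follows directly from independence and $\E{L^2}=2$:
\begin{equation*}
\E*{(\cdot)^2} = 2\|c\|_2^2 \frac{\Delta^2}{n^2\epsilon^2}.
\end{equation*}

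In the local case, each noise term is $\frac1n\sum_{i=1}^n \frac{\Delta}{\epsilon} L_{\xi}^{(i)}$. The quantity is then $\ip{a}{v}$ over the $n|\Phi|$ independent Laplace variables $L^{(i)}_\xi$, with $a_{(i,\xi)} = c_\xi \Delta/(n\epsilon)$. This gives
\begin{equation*}
\|a\|_2^2 \lesssim_\Psi \frac{\Delta^2}{n\epsilon^2}, \qquad \|a\|_\infty \lesssim_\Psi \frac{\Delta}{n\epsilon}.
\end{equation*}
Plugging these into the second form of \Cref{lemma:tail_bound_sum_of_laplace} yields exactly $e^{-C\min(n\epsilon^2\delta^2/\Delta^2,\, n\epsilon\delta/\Delta)}$. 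The variance computation gives $\E*{(\cdot)^2} = 2\|a\|_2^2 \lesssim \Delta^2/(n\epsilon^2)$.

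The only genuinely delicate step is the exact cancellation of the sampling error. It needs $\Psi_j \perp \mathrm{span}(\Phi)$, which is part of orthonormality of the whole system, and it needs $\1_\Omega \in \mathrm{span}(\Phi)$, which is the hypothesis. For boundary-corrected Daubechies wavelets the latter is Lemma 26 of \cite{Manole2024Plugin}, as noted in the statement. Everything else is bookkeeping of constants depending only on $\Psi$.
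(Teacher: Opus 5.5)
Your proposal is correct and follows the paper's proof essentially step by step. You write the constant function as $\sum_{\xi\in\Phi}(\int_\Omega\xi)\,\xi$; orthonormality then removes all $\Psi_j$ contributions and makes the non-private part integrate to exactly $1$, so that $\int_\Omega \tilde{f}_{Z,J,\textrm{priv}}-1$ is a fixed linear combination of the Laplace noise on the $\Phi$-coefficients. From there the tails come from \Cref{lemma:maximumlaplace} (central) and \Cref{lemma:tail_bound_sum_of_laplace} (local), and the second moments from independence, exactly as in the paper; your explicit computation of $\|a\|_2$ and $\|a\|_\infty$ in the local case just makes the constants more transparent.
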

\begin{proof}
    In order to keep the notations consistent with \cite{Manole2024Plugin}, let us note 
    \begin{equation}
        \beta_{\xi}' := \int_\Omega \xi \;.
    \end{equation}
    Since $\Phi$ spans the function constant to $1$, one obtains by orthonormality that
    \begin{equation}
        \begin{aligned}
            1 = \sum_{\xi \in \Phi} \beta_{\xi}' \xi \;.
        \end{aligned}
    \end{equation}
    Then, exploiting orthonormality again yields
    \begin{equation}
        \begin{aligned}
            \int_{\Omega} \tilde{f}_{Z, J, \textrm{priv} } 
            &= 
            \int_{\Omega} 1 \times \paren*{\sum_{\xi \in \Phi} \hat{\beta}_{\xi, \textrm{priv}} \xi + \sum_{j = j_0 }^{J} \sum_{\xi \in \Psi_j} \hat{\beta}_{\xi, \textrm{priv}} \xi} \\
            &= 
            \int_{\Omega} \paren*{\sum_{\xi \in \Phi} \beta_{\xi}' \xi} \times \paren*{\sum_{\xi \in \Phi} \hat{\beta}_{\xi, \textrm{priv}} \xi + \sum_{j = j_0 }^{J} \sum_{\xi \in \Psi_j} \hat{\beta}_{\xi, \textrm{priv}} \xi} \\
            &= \sum_{\xi \in \Phi} \beta_{\xi}' \hat{\beta}_{\xi, \textrm{priv}} \\
            &= \sum_{\xi \in \Phi} \beta_{\xi}' \paren*{\hat{\beta}_{\xi} + (\hat{\beta}_{\xi, \textrm{priv}} - \hat{\beta}_{\xi})} \\
            &= \sum_{\xi \in \Phi} \beta_{\xi}' \hat{\beta}_{\xi} + \sum_{\xi \in \Phi} \beta_{\xi}' (\hat{\beta}_{\xi, \textrm{priv}} - \hat{\beta}_{\xi}) \\
            &= \int_{\Omega} \paren*{\sum_{\xi \in \Phi} \beta_{\xi}' \xi} dZ_n + \sum_{\xi \in \Phi} \beta_{\xi}' (\hat{\beta}_{\xi, \textrm{priv}} - \hat{\beta}_{\xi}) \\
            &= 1 +  \sum_{\xi \in \Phi} \beta_{\xi}' (\hat{\beta}_{\xi, \textrm{priv}} - \hat{\beta}_{\xi})
        \end{aligned}
    \end{equation}
    In other words, the normalization factor is equal to $1$ plus a residual term that depends only on the privacy noise.

In the following, we treat the $\beta_{\xi}'$'s as non-zero without loss of generality, as the possible null values can be excluded without changing the result.

\paragraph{Central DP case.}

In the central DP case, $(\hat{\beta}_{\xi, \textrm{priv}} - \hat{\beta}_{\xi}) \sim  \frac{\Delta}{n \epsilon} \mathcal{L}(1)$ and are independent when varying the $\xi$. 

Thus, by \Cref{lemma:maximumlaplace},
\begin{equation}
        \begin{aligned}
            \PP*{\abs*{\int_{\Omega} \tilde{f}_{Z, J, \textrm{priv} } - 1} \geq \delta} \lesssim_{\Psi} e^{- C \frac{n \epsilon}{\Delta} \delta} \;,
        \end{aligned}
    \end{equation}
    where $C$ is a constant depending only on $\Phi$.
Furthermore, 
\begin{equation}
    \begin{aligned}
        \E*{\abs*{\int_{\Omega} \tilde{f}_{Z, J, \textrm{priv} } - 1}^2} 
        &=
        \E*{\paren*{\sum_{\xi \in \Phi} \beta_{\xi}' (\hat{\beta}_{\xi, \textrm{priv}} - \hat{\beta}_{\xi})}^2} \\
        &=
        \E*{\sum_{\xi \in \Phi} \beta_{\xi}'^2 (\hat{\beta}_{\xi, \textrm{priv}} - \hat{\beta}_{\xi})^2}  \\
        &=
        \E*{\sum_{\xi \in \Phi} \beta_{\xi}'^2 (\frac{\Delta}{n \epsilon} \mathcal{L} (1))^2}  \\
        &\lesssim_{\Psi} | \Phi| \frac{\Delta^2}{(n \epsilon)^2} \\
        &\lesssim_{\Psi} \frac{\Delta^2}{(n \epsilon)^2} \;.
    \end{aligned}
\end{equation}

\paragraph{Local DP case.} 
In the central DP case, $(\hat{\beta}_{\xi, \textrm{priv}} - \hat{\beta}_{\xi}) \sim \frac{1}{n} \sum_{i=1}^n  \frac{\Delta}{\epsilon} \mathcal{L}(1)$ and are independent when varying the $\xi$.

Because of \Cref{lemma:tail_bound_sum_of_laplace}, we have
\begin{equation}
        \begin{aligned}
            \PP*{\abs*{\int_{\Omega} \tilde{f}_{Z, J, \textrm{priv} } - 1} \geq \delta} \lesssim_{\Psi}  e^{- C \min \paren*{ \frac{n \epsilon^2 \delta^2}{\Delta^2}, \frac{n \epsilon \delta}{\Delta}} } \;,
        \end{aligned}
    \end{equation}
where $C$ is again a constant depending only on $\Phi$.
Furthermore, 
\begin{equation}
    \begin{aligned}
        \E*{\abs*{\int_{\Omega} \tilde{f}_{Z, J, \textrm{priv} } - 1}^2} 
        &=
        \E*{\paren*{\sum_{\xi \in \Phi} \beta_{\xi}' (\hat{\beta}_{\xi, \textrm{priv}} - \hat{\beta}_{\xi})}^2} \\
        &=
        \E*{\sum_{\xi \in \Phi} \beta_{\xi}'^2 (\hat{\beta}_{\xi, \textrm{priv}} - \hat{\beta}_{\xi})^2}  \\
        &=
        \E*{\sum_{\xi \in \Phi} \beta_{\xi}'^2 (\frac{1}{n} \sum_{i=1}^n \frac{\Delta}{\epsilon} \mathcal{L} (1))^2}  \\
        &\lesssim_{\Psi} | \Phi| \frac{\Delta^2}{n \epsilon^2} \\
        &\lesssim_{\Psi} \frac{\Delta^2}{n \epsilon^2} \;.
    \end{aligned}
\end{equation}
\end{proof}

\begin{lemma}
    \label{lemma:HP_coeffs_and_func_inequalities}
    If $N \geq 2$, $f_Z \in \mathfrak{B}_{\infty, \infty}^s$ for a $s>0$, then there exist two constants $a, b$ depending only on $\Psi$ such that for any $J \geq j_0$ and any $\delta > 0$,

    \begin{itemize}
        \item The result of \Cref{alg:density_estimation_centralDP} satisfies
        \begin{equation}
            \PP*{\sup_{\xi \in \Phi} \abs*{\hat{\beta}_{\xi, \textrm{priv}} - {\beta}_{\xi}} \geq \delta} \lesssim_{\Psi} e^{- \frac{n \delta^2}{4a^2}} + e^{- \frac{n \epsilon}{2 \Delta} \delta} \;,
        \end{equation}
        and for any $j_0 \leq j \leq J$,
        \begin{equation}
            \PP*{\sup_{\xi \in \Psi_j} \abs*{\hat{\beta}_{\xi, \textrm{priv}} - {\beta}_{\xi}} \geq \delta} \lesssim_{\Psi} 2^{jd} \paren*{e^{- \frac{n \delta^2 / 4}{b+2^{jd/2} b \delta/2}} + e^{- \frac{n \epsilon}{2 \Delta} \delta} }\;.
        \end{equation}
        Furthermore, if $2^J = c_0 \min \paren*{n^{\frac{1}{d + 2s}}, (n \epsilon)^{\frac{1}{d+s}}}$ when $d \geq 2$ and $2^J = c_0 \min \paren*{n^{\frac{1}{1 + 2s}}, (n \epsilon)^{\frac{1}{s + 3/2}}}$ when $d=1$
        \begin{equation}
\label{eq:linfty-simple}
\PP*{\| \hat{f}_{Z, J, \textrm{priv}} - f_{Z, J} \|_{\infty} \geq \delta}
\leq
C J 2^{Jd(d+3)} \exp \paren*{-C \min \paren*{\frac{n\delta^2}{J^2 2^{Jd}}, \frac{n\epsilon}{J 2^{Jd}} \delta}} .
\end{equation}
        \item The result of \Cref{alg:density_estimation_localDP} satisfies
        \begin{equation}
            \PP*{\sup_{\xi \in \Phi} \abs*{\hat{\beta}_{\xi, \textrm{priv}} - {\beta}_{\xi}} \geq \delta} \lesssim_{\Psi} e^{- \frac{n \delta^2}{4a^2}} + e^{-\frac{(\delta/2)^2}{4\frac{\Delta^2}{n \epsilon^2} + 2\frac{\Delta}{n \epsilon} (\delta/2)}} \;,
        \end{equation}
        and for any $j_0 \leq j \leq J$,
        \begin{equation}
            \PP*{\sup_{\xi \in \Psi_j} \abs*{\hat{\beta}_{\xi, \textrm{priv}} - {\beta}_{\xi}} \geq \delta} \lesssim_{\Psi} 2^{jd} \paren*{e^{- \frac{n \delta^2 / 4}{b+2^{jd/2} b \delta/2}} + e^{-\frac{(\delta/2)^2}{4\frac{\Delta^2}{n \epsilon^2} + 2\frac{\Delta}{n \epsilon} (\delta/2)}} }\;.
        \end{equation}
        Furthermore, if $2^J = c_0 \min \paren*{n^{\frac{1}{d + 2s}}, (\sqrt{n} \epsilon)^{\frac{1}{d+s}}}$ when $d \geq 2$ and $2^J = c_0 \min \paren*{n^{\frac{1}{1 + 2s}}, (\sqrt{n} \epsilon)^{\frac{1}{s + 3/2}}}$ when $d=1$ 
        \begin{equation}
\label{eq:linfty-simple}
\PP*{\| \hat{f}_{Z, J, \textrm{priv}} - f_{Z, J} \|_{\infty} \geq \delta}
\leq
C J 2^{Jd(d+3)} \exp \paren*{-C \min \paren*{\frac{n\delta^2}{J^2 2^{Jd}}, \frac{n\epsilon^2}{J^2 2^{2Jd}}\delta^2, \frac{n\epsilon}{J2^{Jd}}\delta}} .
\end{equation}
    \end{itemize}
\end{lemma}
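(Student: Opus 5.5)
The plan is to split each private coefficient error into a sampling part and a privacy part, bound each by a union bound over $\xi$, and then pass from coefficients to sup-norm. Write $\hat{\beta}_{\xi, \textrm{priv}} - \beta_\xi = (\hat\beta_\xi - \beta_\xi) + (\hat\beta_{\xi,\textrm{priv}} - \hat\beta_\xi)$. On the event $\{|\hat{\beta}_{\xi, \textrm{priv}} - \beta_\xi| \geq \delta\}$, at least one of the two parts exceeds $\delta/2$.

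\textbf{Sampling part.} The variable $\hat\beta_\xi - \beta_\xi$ is an average of $n$ i.i.d.\ centered terms $\xi(Z_i) - \beta_\xi$.
\begin{itemize}
\item For $\xi \in \Phi$, these terms are bounded by $2C_2$ by \Cref{ass:wavelet_scaling_support}. Hoeffding's inequality then gives $e^{-n\delta^2/(4a^2)}$.
\item For $\xi \in \Psi_j$, the terms are bounded by $2C_4 2^{jd/2}$. Their variance is at most $\int \xi^2 f_Z \le \|f_Z\|_\infty$, which is a constant because $f_Z\in\mathfrak B^s_{\infty,\infty}$ is bounded. Bernstein's inequality then gives $e^{-\frac{n\delta^2/4}{b + 2^{jd/2} b\delta/2}}$.
\end{itemize}
A union bound over $|\Phi|\le C_1$ and $|\Psi_j|\le C_5 2^{jd}$ produces the prefactors.

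\textbf{Privacy part.}
\begin{itemize}
\item Central DP: $\hat\beta_{\xi,\textrm{priv}}-\hat\beta_\xi \sim \frac{\Delta}{n\epsilon}\mathcal L(1)$ i.i.d. \Cref{lemma:maximumlaplace} with threshold $\frac{n\epsilon\delta}{2\Delta}$ gives the term $M e^{-\frac{n\epsilon}{2\Delta}\delta}$, with $M = |\Phi|$ or $|\Psi_j|$.
\item Local DP: each privacy error is $\langle a, v\rangle$ with $a = \frac{\Delta}{n\epsilon}(1,\dots,1)$, so $\|a\|_2^2 = \Delta^2/(n\epsilon^2)$ and $\|a\|_\infty = \Delta/(n\epsilon)$. \Cref{lemma:tail_bound_sum_of_laplace} at $t=\delta/2$ gives the stated exponent. (The printed $n\epsilon^2$ inside $\|a\|_2^2$ should read accordingly; I would carry the exact form from the lemma.)
\end{itemize}
Summing the sampling and privacy contributions proves the coefficient bounds.

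\textbf{Sup-norm bound.} This is the main obstacle. Since $\hat f$ involves the positive part and renormalization, I would first bound $\|\tilde f_{Z,J,\textrm{priv}} - f_{Z,J}\|_\infty$:
\[
\|\tilde f_{Z,J,\textrm{priv}} - f_{Z,J}\|_\infty \le C_1C_2\sup_{\Phi}|\cdot| + \sum_{j=j_0}^J C_3C_4 2^{jd/2}\sup_{\Psi_j}|\cdot| ,
\]
using the bounded overlap (iii) and the scaling (iv). Requiring each of the $\lesssim J$ levels to contribute at most $\delta/(J 2^{Jd/2})$ and union bounding over levels gives:
\begin{itemize}
\item the sampling exponent $n\delta^2/(J^2 2^{Jd})$;
\item the central privacy exponent $\frac{n\epsilon\,\delta}{\Delta J 2^{Jd/2}} \gtrsim \frac{n\epsilon\,\delta}{J2^{Jd}}$, using $\Delta\asymp 2^{Jd/2}$;
\item in the local case, the quadratic term $\frac{n\epsilon^2\delta^2}{J^2 2^{2Jd}}$, coming from $\Delta^2 2^{Jd}$.
\end{itemize}
The Bernstein linear term $n\delta/(J2^{Jd})$ is dominated once $\delta\lesssim 1$.

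\textbf{Passage to $\hat f$.} On the event that $\|\tilde f_{Z,J,\textrm{priv}} - f_{Z,J}\|_\infty$ is small, $\tilde f$ is positive, because $f_{Z,J}\ge \gamma^{-1}/2$ for $J$ large (bias $\lesssim 2^{-Js}$). Hence $\hat f = \tilde f/\int\tilde f$. The normalizer satisfies $|\int \tilde f - 1| \le \|\tilde f - f_{Z,J}\|_\infty$, so $\|\hat f - f_{Z,J}\|_\infty \lesssim \|\tilde f - f_{Z,J}\|_\infty$. The loose polynomial prefactor $J2^{Jd(d+3)}$ absorbs all union-bound cardinalities and constants. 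The tuning of $J$ is used only to ensure the bias condition and $\Delta\asymp 2^{Jd/2}$.
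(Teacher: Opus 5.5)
Your coefficient-level argument is the same as the paper's. You split at $\delta/2$ into a sampling part and a privacy part, take a union bound over $\Phi$ and $\Psi_j$, and invoke \Cref{lemma:maximumlaplace} (central) or \Cref{lemma:tail_bound_sum_of_laplace} (local). The only difference at this stage is that you derive the sampling tails from Hoeffding and Bernstein directly, whereas the paper cites Lemma 30 of \cite{Manole2024Plugin}. Your $b$ depends on $\|f_Z\|_\infty$ through the variance bound; this is unavoidable and also implicit in that citation, so ``depending only on $\Psi$'' has to be read loosely. Your parenthetical worry is unfounded: $4\|a\|_2^2 = 4\Delta^2/(n\epsilon^2)$ is exactly the printed denominator.

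The real difference is the sup-norm step. The paper passes to the coefficients through the Lipschitz bounds and the covering of $\Omega$ from the proof of Lemma 30 in \cite{Manole2024Plugin}. That is where its factor $K = O(2^{Jd(d+2)})$, and hence the prefactor $J2^{Jd(d+3)}$, comes from. You instead use the deterministic pointwise bound given by the bounded overlap (iii) and the scaling (iv) of \Cref{ass:wavelet_scaling_support}. So no discretization of $\Omega$ is needed, and the union bound costs only $|\Phi| + \sum_{j\le J}|\Psi_j| \lesssim 2^{Jd}$. The paper's covering step ends up reducing to the same coefficient suprema anyway; a covering only helps if one concentrates the point values $\tilde f(x) - f_{Z,J}(x)$ directly. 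Your route is therefore simpler and gives a strictly stronger bound with the same exponents. Your exponent bookkeeping is correct, including the $n\epsilon^2\delta^2/(J^2 2^{2Jd})$ term in the local case and dropping the Bernstein linear term for $\delta\lesssim 1$ (the paper likewise restricts to $\delta \le 1$).

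One caveat concerns your final step. The paper's argument only controls the linear estimator $\tilde f_{Z,J,\textrm{priv}}$ and silently writes $\hat f$. You do handle the positive part and the renormalization, but your positivity argument needs $f_{Z,J}\ge\gamma^{-1}/2$. That requires a lower bound $f_Z\ge\gamma^{-1}$, which is not among the lemma's hypotheses: only $f_Z\in\mathfrak B^s_{\infty,\infty}$ is assumed.

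This hypothesis does hold where the lemma is used (\Cref{lemma:control_infty_normalizing}). Moreover, what is needed there is exactly the bound on $\|\tilde f_{Z,J,\textrm{priv}} - f_{Z,J}\|_\infty$, which your argument gives with no lower bound at all. Still, to prove the statement for $\hat f$ as written, you should either add the hypothesis or state the result for $\tilde f$.

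Two minor points. The identity $\int f_{Z,J}=1$, which you use for the normalizer, holds because $\Phi$ spans the constants. And $\Delta\asymp 2^{Jd/2}$ holds for every $J\ge j_0$, so the tuning of $J$ is not needed for it.
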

\begin{proof}
Let us start with the central DP (\Cref{alg:density_estimation_centralDP}) case.
Let $\delta>0$.
\begin{equation}
\begin{aligned}
    \PP*{\sup_{\xi \in \Phi} \abs*{\hat{\beta}_{\xi, \textrm{priv}} - {\beta}_{\xi}} \geq \delta} 
    &\leq 
    \PP*{\sup_{\xi \in \Phi} \abs*{\hat{\beta}_{\xi, \textrm{priv}} - \hat{\beta}_{\xi}} \geq \delta/2} 
    +
    \PP*{\sup_{\xi \in \Phi} \abs*{\hat{\beta}_{\xi} - {\beta}_{\xi}} \geq \delta/2}\;,\\
    &\lesssim_{\Psi} e^{- \frac{n \epsilon}{2 \Delta} \delta} + e^{- \frac{n \delta^2}{4a^2}}  \;,
\end{aligned}
\end{equation}
where the second term in the sum is controlled using Lemma 30 in \cite{Manole2024Plugin}, and where the first term is controlled with a union-bound and \Cref{lemma:maximumlaplace}.

Similarly, 
\begin{equation}
\begin{aligned}
    \PP*{\sup_{\xi \in \Psi_j} \abs*{\hat{\beta}_{\xi, \textrm{priv}} - {\beta}_{\xi}} \geq \delta} 
    &\leq 
    \PP*{\sup_{\xi \in \Psi_j} \abs*{\hat{\beta}_{\xi, \textrm{priv}} - \hat{\beta}_{\xi}} \geq \delta/2} +
    \PP*{\sup_{\xi \in \Psi_j} \abs*{\hat{\beta}_{\xi} - {\beta}_{\xi}} \geq \delta/2} \\
    &\lesssim_{\Psi} 2^{jd} \paren*{e^{- \frac{n \epsilon}{2 \Delta} \delta}  + e^{- \frac{n \delta^2 / 4}{b+2^{jd/2} b \delta/2}} }\;,
\end{aligned}
\end{equation}
where the second term in the sum is again controlled using Lemma 30 in \cite{Manole2024Plugin}, and where the first term is again controlled with a union-bound and \Cref{lemma:maximumlaplace}.

For the concentration in infinite norm, following Lipschitz bounds and the covering from the proof of Lemma 30 in \cite{Manole2024Plugin}, we can say that for any $2^{-J} \leq \delta \leq 1$,
\begin{equation}
    \begin{aligned}
        &\PP*{\| \hat{f}_{Z, J, \textrm{priv}} - f_{Z, J} \|_{\infty} \geq \delta} \\
        &\leq
        K \PP*{\sup_{\xi \in \Phi} \abs*{\hat{\beta}_{\xi, \textrm{priv}} - {\beta}_{\xi}} \geq c \delta}
        + K \PP*{J 2^{Jd/2} \sup_{j_0\leq j \leq J}\sup_{\xi \in \Psi_j} \abs*{\hat{\beta}_{\xi, \textrm{priv}} - {\beta}_{\xi}} \geq  c\delta} 
    \end{aligned}
\end{equation}
where $K = O(2^{Jd(d+2)})$ and where $c$ is a positive constant.

Combining with the previous results, it follows that 
\begin{equation}
    \begin{aligned}
        &\PP*{\| \hat{f}_{Z, J, \textrm{priv}} - f_{Z, J} \|_{\infty} \geq \delta} \\
        &\leq 
        K \PP*{\sup_{\xi \in \Phi} \abs*{\hat{\beta}_{\xi, \textrm{priv}} - {\beta}_{\xi}} \geq c \delta}
        + K \PP*{J 2^{Jd/2} \sup_{j_0\leq j \leq J}\sup_{\xi \in \Psi_j} \abs*{\hat{\beta}_{\xi, \textrm{priv}} - {\beta}_{\xi}} \geq  c\delta} \\
        &\lesssim 
        K \paren*{e^{- \frac{n (c\delta)^2}{4a^2}} + e^{- \frac{n \epsilon}{2 \Delta} c\delta}}
        + K \sum_{j=j_0}^J 
        \PP*{\sup_{\xi \in \Psi_j} \abs*{\hat{\beta}_{\xi, \textrm{priv}} - {\beta}_{\xi}} \geq \frac{c\delta}{J2^{Jd/2}}} \\
        &\lesssim 
        K \paren*{e^{- c_1 n\delta^2} + e^{- c_2 \frac{n \epsilon}{\Delta}\delta}} \\
        &\qquad\qquad+ K \sum_{j=j_0}^J 2^{jd}
        \paren*{
        \exp \paren*{- \frac{n \, \paren*{ \frac{c\delta}{J2^{Jd/2}} }^2/4}{b+2^{jd/2} b \paren*{ \frac{c\delta}{J2^{Jd/2}} }/2}}
        + \exp \paren*{- \frac{n\epsilon}{2\Delta}\,\frac{c\delta}{J2^{Jd/2}}}
        } \\
        &\lesssim 
        K \paren*{e^{- c_1 n\delta^2} + e^{- c_2 \frac{n \epsilon}{\Delta}\delta}} \\
        &\qquad\qquad + K \,J 2^{Jd}
        \exp \paren*{- c_3 \frac{n\delta^2}{J^2 2^{Jd}}}
        + K J 2^{Jd}
        \exp \paren*{- c_4 \frac{n\epsilon}{\Delta}\,\frac{\delta}{J2^{Jd/2}}} \;,
    \end{aligned}
\end{equation}
for some positive constants $c_1,c_2,c_3,c_4$.

In particular, since $\Delta \asymp_{\Psi} 2^{Jd/2}$, the last term becomes
\begin{equation}
    K\,J2^{Jd}\exp\!\Big\{-c_5 \frac{n\epsilon}{J2^{Jd}}\delta\Big\} \;,
\end{equation}
for a positive constant $c_5$ and thus 
\begin{equation}
\label{eq:linfty-simple_2}
\PP*{\| \hat{f}_{Z, J, \textrm{priv}} - f_{Z, J} \|_{\infty} \geq \delta}
\lesssim
K J 2^{Jd} \exp \paren*{-c \min \paren*{\frac{n\delta^2}{J^2 2^{Jd}}, \frac{n\epsilon}{J 2^{Jd}} \delta}} \;.
\end{equation}
The claimed result follows.

Now for the local DP (\Cref{alg:density_estimation_localDP}) case, let $\delta > 0$.
\begin{equation}
\begin{aligned}
    \PP*{\sup_{\xi \in \Phi} \abs*{\hat{\beta}_{\xi, \textrm{priv}} - {\beta}_{\xi}} \geq \delta} 
    &\leq 
    \PP*{\sup_{\xi \in \Phi} \abs*{\hat{\beta}_{\xi, \textrm{priv}} - \hat{\beta}_{\xi}} \geq \delta/2} 
    +
    \PP*{\sup_{\xi \in \Phi} \abs*{\hat{\beta}_{\xi} - {\beta}_{\xi}} \geq \delta/2}\;,\\
    &\lesssim e^{-\frac{(\delta/2)^2}{4\frac{\Delta^2}{n \epsilon^2} + 2\frac{\Delta}{n \epsilon} (\delta/2)}} +  e^{- \frac{n \delta^2}{4a^2}}  \;,
\end{aligned}
\end{equation}
where the second term in the sum is controlled using Lemma 30 in \cite{Manole2024Plugin}, and where the first term is controlled with a union-bound and \Cref{lemma:tail_bound_sum_of_laplace}.

Similarly, 
\begin{equation}
\begin{aligned}
    \PP*{\sup_{\xi \in \Psi_j} \abs*{\hat{\beta}_{\xi, \textrm{priv}} - {\beta}_{\xi}} \geq \delta} 
    &\leq 
    \PP*{\sup_{\xi \in \Psi_j} \abs*{\hat{\beta}_{\xi, \textrm{priv}} - \hat{\beta}_{\xi}} \geq \delta/2} +
    \PP*{\sup_{\xi \in \Psi_j} \abs*{\hat{\beta}_{\xi} - {\beta}_{\xi}} \geq \delta/2} \\
    &\lesssim 2^{jd} \paren*{e^{-\frac{(\delta/2)^2}{4\frac{\Delta^2}{n \epsilon^2} + 2\frac{\Delta}{n \epsilon} (\delta/2)}} + e^{- \frac{n \delta^2 / 4}{b+2^{jd/2} b \delta/2}} }\;,
\end{aligned}
\end{equation}

where the second term in the sum is again controlled using Lemma 30 in \cite{Manole2024Plugin}, and where the first term is again controlled with a union-bound and \Cref{lemma:tail_bound_sum_of_laplace}.

For the concentration in infinite norm, the same arguments as in the previous case give the existence of $K = O(2^{Jd(d+2)})$ and of a positive constant $c$ such that
\begin{equation}
    \begin{aligned}
        &\PP*{\| \hat{f}_{Z, J, \textrm{priv}} - f_{Z, J} \|_{\infty} \geq \delta} \\
        &\leq 
        K \PP*{\sup_{\xi \in \Phi} \abs*{\hat{\beta}_{\xi, \textrm{priv}} - {\beta}_{\xi}} \geq c \delta}
        + K \PP*{J 2^{Jd/2} \sup_{j_0\leq j \leq J}\sup_{\xi \in \Psi_j} \abs*{\hat{\beta}_{\xi, \textrm{priv}} - {\beta}_{\xi}} \geq  c\delta} \\
        &\lesssim 
        K \PP*{\sup_{\xi \in \Phi} \abs*{\hat{\beta}_{\xi, \textrm{priv}} - {\beta}_{\xi}} \geq c \delta}
        + K \sum_{j=j_0}^J 
        \PP*{\sup_{\xi \in \Psi_j} \abs*{\hat{\beta}_{\xi, \textrm{priv}} - {\beta}_{\xi}} \geq \frac{c\delta}{J2^{Jd/2}}} \\
        &\lesssim 
        K \paren*{e^{- \frac{n c^2 \delta^2}{4a^2}} + e^{-\frac{( c \delta/2)^2}{4\frac{\Delta^2}{n \epsilon^2} + 2\frac{\Delta}{n \epsilon} ( c\delta/2)}}} \\
        &\qquad\qquad+ K \sum_{j=j_0}^J  \paren*{2^{jd} \paren*{e^{- \frac{n \frac{c\delta}{J2^{Jd/2}}^2 / 4}{b+2^{jd/2} b \frac{c\delta}{J2^{Jd/2}}/2}} + e^{-\frac{(\frac{c\delta}{J2^{Jd/2}}/2)^2}{4\frac{\Delta^2}{n \epsilon^2} + 2\frac{\Delta}{n \epsilon} (\frac{c\delta}{J2^{Jd/2}}/2)}} }} \;,
    \end{aligned}
\end{equation}

In particular, since $\Delta \asymp 2^{Jd/2}$, the last term becomes
\begin{equation}
    K J2^{Jd}\exp\paren*{-c \min \paren*{\frac{n\epsilon^2}{J^2 2^{2Jd}}\delta^2, \frac{n\epsilon}{J2^{Jd}}\delta}} \;,
\end{equation}
for a positive constant $c$
and thus 
\begin{equation}
\label{eq:linfty-simple_2}
\PP*{\| \hat{f}_{Z, J, \textrm{priv}} - f_{Z, J} \|_{\infty} \geq \delta}
\lesssim
K J 2^{Jd} \exp \paren*{-c \min \paren*{\frac{n\delta^2}{J^2 2^{Jd}}, \frac{n\epsilon^2}{J^2 2^{2Jd}}\delta^2, \frac{n\epsilon}{J2^{Jd}}\delta}} 
\end{equation}
for a possibly different positive constant $c$.
The claimed result follows.
\end{proof}

\begin{lemma}[Convergence in $L^{\infty}$ and normalization]
\label{lemma:control_infty_normalizing}
    If $N \geq 2$, $f_Z \in \mathfrak{B}_{\infty, \infty}^s$ for a $s>0$, $\gamma \geq f_Z \geq \gamma^{-1}$ on $\Omega$, if $2^J \asymp \min \paren*{n^{\frac{1}{d + 2s}}, (\pi_{n,\epsilon})^{\frac{1}{d+s}}}$ when $d \geq 2$ and $2^J \asymp \min \paren*{n^{\frac{1}{d + 2s}}, (\pi_{n,\epsilon})^{\frac{1}{s + 3/2}}}$ when $d=1$, 
    the event $A = \bigl\{ \tilde{f}_{Z, J, \textrm{priv} } \geq \gamma^{-1}/2$ pointwise and $1 - a \leq  1 /\int_{\Omega} \tilde{f}_{Z, J, \textrm{priv} } \leq 1 + a$  $\bigr\}$ where $a$ is a fixed positive constant strictly smaller than $1$ satisfies 
    \begin{equation}
        \PP*{A^c} \lesssim \frac{1}{n^2} + \frac{1}{\pi_{n, \epsilon}^4}
    \end{equation}
\end{lemma}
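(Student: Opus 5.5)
Split $A^c$ into two failure events: the pointwise lower bound on $\tilde{f}_{Z,J,\textrm{priv}}$ fails, or the normalization constraint $1-a \le 1/\int_\Omega \tilde{f}_{Z,J,\textrm{priv}} \le 1+a$ fails. A union bound then gives
\[
\PP*{A^c} \le \PP*{\inf_\Omega \tilde{f}_{Z,J,\textrm{priv}} < \gamma^{-1}/2} + \PP*{\abs*{\int_\Omega \tilde{f}_{Z,J,\textrm{priv}} - 1} \ge a'} ,
\]
where $a'>0$ is a constant depending only on $a$. Such a constant exists because $1/(1+x)$ lies in $[1-a,1+a]$ whenever $|x|\le a'$ for $a'$ small enough.

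\textbf{Normalization term.} \Cref{lemma:control_normalization_factor} applies directly with $\delta = a'$.
\begin{itemize}
\item Under central DP the bound is $e^{-C n\epsilon/\Delta}$.
\item Under local DP it is $e^{-C\min(n\epsilon^2/\Delta^2,\, n\epsilon/\Delta)}$.
\end{itemize}
Since $\Delta \asymp 2^{Jd/2}$ and $2^J \lesssim \pi_{n,\epsilon}^{1/(d+s)}$ (or $\pi_{n,\epsilon}^{1/(s+3/2)}$ when $d=1$), the ratio $\pi_{n,\epsilon}/\Delta$ grows polynomially in $\pi_{n,\epsilon}$. In the local case, note $n\epsilon^2/\Delta^2 = (\pi_{n,\epsilon}/\Delta)^2$. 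The exponent therefore behaves like $\pi_{n,\epsilon}^{c}$ for some $c>0$, and the term is $o(\pi_{n,\epsilon}^{-4})$.

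\textbf{Lower-bound term.} Write $\tilde{f}_{Z,J,\textrm{priv}} \ge f_Z - \|f_{Z,J}-f_Z\|_\infty - \|\tilde{f}_{Z,J,\textrm{priv}} - f_{Z,J}\|_\infty$. The bias satisfies $\|f_{Z,J}-f_Z\|_\infty \lesssim 2^{-Js}$ for $f_Z\in\mathfrak B^s_{\infty,\infty}$ (standard wavelet approximation, e.g.\ the results recalled from \cite{Manole2024Plugin}). This is below $\gamma^{-1}/4$ once $J$ is large. Since $f_Z \ge \gamma^{-1}$, it then suffices to bound $\PP{\|\tilde{f}_{Z,J,\textrm{priv}} - f_{Z,J}\|_\infty \ge \gamma^{-1}/4}$. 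This is the deviation event of \Cref{lemma:HP_coeffs_and_func_inequalities}, whose proof bounds the linear part through the coefficient deviations. Take $\delta = \gamma^{-1}/4$ there, a constant.

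The prefactor $CJ2^{Jd(d+3)}$ is polynomial in $n$ and $\pi_{n,\epsilon}$. Each exponent is a positive power of $n$ or $\pi_{n,\epsilon}$ up to logarithmic factors:
\begin{itemize}
\item $n/(J^2 2^{Jd}) \gtrsim n^{2s/(d+2s)}/J^2$;
\item under central DP, $n\epsilon/(J2^{Jd}) \gtrsim \pi_{n,\epsilon}^{s/(d+s)}/J$, and for $d=1$ the analogous power is $\pi_{n,\epsilon}^{(s+1/2)/(s+3/2)}$;
\item under local DP, $n\epsilon^2/(J^22^{2Jd}) = (\pi_{n,\epsilon}/(J2^{Jd}))^2$ is handled in the same way.
\end{itemize}
Each resulting term is thus of order $\mathrm{poly}\cdot\exp(-\text{power})$, which is $o(n^{-2}+\pi_{n,\epsilon}^{-4})$, giving the claim.

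\textbf{Main obstacle.} The delicate step is the bookkeeping in this last application. One must check that every exponent is a genuine positive power of $\min(n,\pi_{n,\epsilon})$ under the given tuning of $J$, uniformly across both privacy models and the $d=1$ case. One must also handle the regime where $2^J$ is set by $n$ rather than $\pi_{n,\epsilon}$. There the privacy exponent $\pi_{n,\epsilon}/2^{Jd}$ is even larger, because $2^J$ is smaller, so monotonicity in $J$ should settle it. Finally, \Cref{lemma:HP_coeffs_and_func_inequalities} is stated for $\hat f$ but is used here for the unnormalized $\tilde f$; its proof only treats the linear estimator, so applying it to $\tilde{f}_{Z,J,\textrm{priv}}$ is what I would rely on.
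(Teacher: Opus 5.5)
Your proposal is correct and follows the paper's proof essentially step for step. The paper also bounds $\PP*{A^c}$ by the failure of three conditions and then uses $\Delta \asymp 2^{Jd/2}$ together with the super-polynomial decay of polynomial-times-$\exp(-\text{positive power})$ terms. The three conditions are: the deterministic bias $\|f_Z - f_{Z,J}\|_\infty \lesssim 2^{-Js}$ lying below $\gamma^{-1}/4$; the sup-norm deviation of the private linear estimator lying below $\gamma^{-1}/4$, via \Cref{lemma:HP_coeffs_and_func_inequalities}; and the normalization, via \Cref{lemma:control_normalization_factor} with a constant $\delta$. Your exponent bookkeeping is more explicit than the paper's ``gross majoration'': it covers $d=1$, local DP, and the fact that $2^J$ is dominated by both $n^{1/(d+2s)}$ and the $\pi_{n,\epsilon}$-power whichever is active. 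Your reading that the sup-norm lemma is really about the unnormalized $\tilde{f}_{Z,J,\textrm{priv}}$ is exactly how the paper uses it.
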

\begin{proof}
In this proof, we bound the probability of $A$ by the probability of 
$\bigl\{$ $\| {f}_{Z } - {f}_{Z, J} \|_{L^{\infty}} \leq \gamma^{-1}/4$ and {$\| {\tilde{f}}_{Z, J} - {f}_{Z, J} \|_{L^{\infty}}\leq \gamma^{-1}/4 $} and $1 - a \leq  1 /\int_{\Omega} \tilde{f}_{Z, J, \textrm{priv} } \leq 1 + a$ $\bigr\} $. 

    We first bound the shift in infinite norm between ${f}_{Z }$ and ${f}_{Z, J}$ as in the proof of Lemma 31 in \cite{Manole2024Plugin} by
\begin{equation}
    \begin{aligned}
        \| {f}_{Z } - {f}_{Z, J} \|_{L^{\infty}}
        &\leq \sum_{j \geq J+1} 2^{j d /2} \| ({\beta}_{\xi};  \xi \in \Psi_j) \|_{\infty} \\
        &\leq \| f_Z \|_{\mathfrak{B}_{\infty \infty}^s (\Omega)} \sum_{j \geq J+1} 2^{j d /2 - j(d/2 + s)} \\
        &\lesssim \| f_Z \|_{\mathfrak{B}_{\infty \infty}^s (\Omega)} 2^{-Js} \;.
    \end{aligned}
\end{equation}
In particular, as soon as $J$ (or equivalently, $\min(n, \pi_{n,\epsilon})$) is large enough (larger than a constant), this term is smaller than $\gamma^{-1}/4$.

The second part involving $\tilde{f}_{Z, J, \textrm{priv}}$  is controlled respectively  by \Cref{lemma:HP_coeffs_and_func_inequalities}  (for the infinite norm part) and \Cref{lemma:control_normalization_factor}  (for the integral part).
Because $\Delta \asymp 2^{Jd/2}$, we get that, when $\min(n, \pi_{n,\epsilon})$ is bigger than a constant, $A^{c}$ is of probability at most 
\begin{equation}
     C e^{- C \frac{n \epsilon}{2^{Jd/2}}} + C J 2^{Jd(d+3)} \exp \paren*{-C \min \paren*{\frac{n}{J^2 2^{Jd}}, \frac{n\epsilon}{J 2^{Jd}} }}
\end{equation}
under central DP for a positive constant $C$, and

\begin{equation}
     C e^{- C \min \paren*{ \frac{n \epsilon^2 }{\Delta^2}, \frac{n \epsilon }{\Delta}} }  + C J 2^{Jd} \exp \paren*{-c \min \paren*{\frac{n}{J^2 2^{Jd(d+3)}}, \frac{n\epsilon^2}{J^2 2^{2Jd (d+3)}}, \frac{n\epsilon}{J2^{Jd}}}} 
\end{equation}
with again $\Delta \asymp 2^{Jd/2}$.

under local DP where $C$ is again a positive constant.

Since we fix $2^J \asymp \min \paren*{n^{\frac{1}{d + 2s}}, (\pi_{n,\epsilon})^{\frac{1}{d+s}}}$ when $d \geq 2$ and $2^J \asymp \min \paren*{n^{\frac{1}{d + 2s}}, (\pi_{n,\epsilon})^{\frac{1}{s + 3/2}}}$ when $d=1$, a gross majoration yields, by super-polynomial decay, 
\begin{equation}
    \PP{A^C} \lesssim \frac{1}{n^2} + \frac{1}{n^4 \epsilon^4} \;.
\end{equation}
under central DP, 
and
\begin{equation}
    \PP{A^C} \lesssim \frac{1}{n^2} + \frac{1}{n^2 \epsilon^4} \;.
\end{equation}
under local DP.
\end{proof}

\section{Proofs of \Cref{sec:transport_map_estimation}}
\label{proofs_of_sec:transport_map_estimation}

\subsection{Proof of \Cref{th:main_rate}}
\label{proof_of_th:main_rate}

This proof leverages the stability bounds of \cite{balakrishnan2025stabilityboundssmoothoptimal}.

As in the proof of \Cref{th:utility_density_estimation} (in \Cref{proof_of_th:utility_density_estimation}), let us define the event  $A = \bigl\{ $ $ \tilde{f}_{Z, J, \textrm{priv} } \geq \gamma^{-1}/2 $ pointwise  and $1 - a \leq  1 /\int_{\Omega} \tilde{f}_{Z, J, \textrm{priv} } \leq 1 + a$  $\bigr\}$ where $a$ is a fixed positive constant strictly smaller than $1$.
By \Cref{lemma:control_infty_normalizing}, $\PP*{A^c} \lesssim \frac{1}{n^2} + \frac{1}{\pi_{n, \epsilon}^4}$.

Also, under $A$, $\frac{d f_X}{d \hat{f}_{X, J, \textrm{priv} }}$ (used to refer the Radon-Nikodym derivative of the distribution of probability $P_X$ on $\Omega$ of density $f_X$ by the one of density $\hat{f}_{X, J, \textrm{priv} }$) exists and is lower-bounded almost surely by a constant. 

Under \Cref{ass:regularity_transport_problem}, the transport map $T$ between $P_X$ and $P_Y$ is unique (see e.g.  \citep[Th.~1.17]{santambrogio2015optimal}). Let $\hat{T}_{\textrm{priv}}(x)$ be an optimal coupling of $\hat{f}_{X, J, \textrm{priv} }$ and $\hat{f}_{Y, J, \textrm{priv} }$. We have
\begin{equation}
\label{eq:jihbojhbqsdf}
    \begin{aligned}
        &\E*{\| \hat{T}_{\textrm{priv}}(x) - T(x) \|^2_{L^2(P_X)}} \\
        &=
        \E*{\| \hat{T}_{\textrm{priv}}(x) - T(x) \|^2_{L^2(P_X)} \1_{A}}
        +
        \E*{\| \hat{T}_{\textrm{priv}}(x) - T(x) \|^2_{L^2(P_X)} \1_{A^c}} \\
        &\lesssim 
        \E*{\| \hat{T}_{\textrm{priv}}(x) - T(x) \|^2_{L^2({\hat{f}_{X, J,  \textrm{priv} })}}  \1_{A}}
        +
        \PP*{\1_{A^c}} \\
        &\leq 
        \E*{\| \hat{T}_{\textrm{priv}}(x) - T(x) \|^2_{L^2({\hat{f}_{X, J,  \textrm{priv} })}}  }
        +
        \PP*{\1_{A^c}} .
    \end{aligned}
\end{equation}

As previously, $\PP*{\1_{A^c}}$ is negligible compared to the expected rate. Let us now study 
$\E*{\| \hat{T}_{\textrm{priv}}(x) - T(x) \|^2_{L^2({\hat{f}_{X, \textrm{priv} }})}  }$. 

Under our assumptions, Theorem 3 of \cite{balakrishnan2025stabilityboundssmoothoptimal} (combined with Young's inequality) gives 
\begin{equation}
    \begin{aligned}
        \E*{\| \hat{T}_{\textrm{priv}}(x) - T(x) \|^2_{L^2({\hat{f}_{X, J, \textrm{priv} }})}  }
        \lesssim 
        {W_2^2(f_X, \hat{f}_{X, J , \textrm{priv}}) + W_2^2(f_Y, \hat{f}_{Y, J , \textrm{priv}})} \;,
    \end{aligned}
\end{equation}
and we can  now apply \Cref{th:utility_density_estimation} (the bound in Holder norm implies the finite Besov norm assumption by Lemma 27 in \cite{Manole2024Plugin}) which gives that 
\begin{equation}
    \E*{\| \hat{T}_{\textrm{priv}}(x) - T(x) \|^2_{L^2({\hat{f}_{X, J, \textrm{priv} }})}  }
    \lesssim 
    \max \paren*{n^{- \frac{2 (s + 1)}{ 2 s + d}}, (\pi_{n,\epsilon})^{- \frac{2 (s + 1)}{ s + d}}}
\end{equation}
when $d \geq 3$,
\begin{equation}
\begin{aligned}
    &\E*{\| \hat{T}_{\textrm{priv}}(x) - T(x) \|^2_{L^2({\hat{f}_{X, J, \textrm{priv} }})}  }
    \lesssim \text{PolyLog}(n, \epsilon) \times \max \paren*{n^{- \frac{2 (s + 1)}{ 2 s + d}}, (\pi_{n,\epsilon})^{- \frac{2 (s + 1)}{ s + d}}}
\end{aligned}
\end{equation}
when $d=2$,
and 
\begin{equation}
    \E*{\| \hat{T}_{\textrm{priv}}(x) - T(x) \|^2_{L^2({\hat{f}_{X, J, \textrm{priv} }})}  }
    \lesssim 
    \max \paren*{n^{- 1 }, (\pi_{n,\epsilon})^{- \frac{2 (s + 1)}{ s + 3/2}}}
\end{equation}
when $d=1$.
Combining with \Cref{eq:jihbojhbqsdf} yields the stated result.

\subsection{Proof of \Cref{th:one_d_rate}}
\label{proof_of_th:one_d_rate}

We introduce the auxiliary map 
\begin{equation}
    \tilde{T}(x) =
    \begin{cases}
        F_{f_Y}^{-1}(1/m) \text{ if } x \in [0, F_{f_X}^{-1}(1/m)] \\
        F_{f_Y}^{-1}(2/m) \text{ if } x \in (F_{f_X}^{-1}(1/m), F_{f_X}^{-1}(2/m)] \\
        \dots \\
        F_{f_Y}^{-1}((m-1)/m) \text{ if } x \in (F_{f_X}^{-1}((m-2)/m), F_{f_X}^{-1}((m-1)/m)] \\
        1 \text{ if } x \in (F_{f_X}^{-1}((m-1)/m), 1] 
    \end{cases}
\end{equation}
where $ F_{f_X}^{-1}$ exists and is single value thanks to our assumption on the density of $X$.

Then we may decompose the error as
\begin{equation}
\begin{aligned}
    \E*{\| \hat{T}_{} - T \|^2_{L^2(P_X)}}
    & = \E*{\| (\hat{T}_{} - \tilde{T}_{}) - (T - \tilde{T}_{})\|^2_{L^2(P_X)}} \\
    &\lesssim 
    \underbrace{\E*{\| \hat{T}_{} - \tilde{T}_{}\|^2_{L^2(P_X)}}}_{\text{Sampling + Privacy noise}} + \underbrace{\E*{\| T - \tilde{T}_{}\|^2_{L^2(P_X)}}}_{\text{Bias}}
\end{aligned}
\end{equation}

\paragraph{Bias.}
We control the bias as 
\begin{equation}
    \begin{aligned}
        \E*{\| T - \tilde{T}_{}\|^2_{L^2(P_X)}}
        &=
        \int_{\Omega}
        | T(x) - \tilde{T}_{}(x)|^2 f_X(x) dx \\
        &=\int_{[0, 1]}
        | T(F_{f_X}^{-1}(u)) - \tilde{T}_{}(F_{f_X}^{-1}(u))|^2 du
    \end{aligned}
\end{equation} 
and we can notice that under our assumptions, for any $u \in [0, 1]$,
$|T(F_{f_X}^{-1}(u)) - \tilde{T}_{}(F_{f_X}^{-1}(u))|^2 \lesssim \frac{1}{m^2}$, and thus 
\begin{equation}
    \E*{\| T - \tilde{T}_{}\|^2_{L^2(P_X)}}
    \lesssim 
    \frac{1}{m^2} \;.
\end{equation}

\paragraph{Sampling and privacy noise.}

First, let us notice that since $f_X$ is upper-bounded,
\begin{equation}
   {\| \hat{T}_{} - \tilde{T}_{}\|^2_{L^2(P_X)}}
    \lesssim
    {\int_{\Omega}\| \hat{T}_{}(x) - \tilde{T}_{}(x)\|^2 dx} \;.
\end{equation}

Then, we can notice that both $\hat{T}$ and $\tilde{T}$ are piecewise constant, and if both the quantiles of $f_X$ and $f_Y$ are estimated with a low error, they should jump to almost the same levels (depending on the error on the estimation of the quantiles of $f_Y$), and almost at the same time (depending on the error on the estimation of the quantiles of $f_X$). We formalize this reasoning below.

First, let us notice that since the densities $f_X$ and $f_Y$ are uniformly upper and lower bounded, there exist $0 < \alpha \leq \beta$ such that, for any $i$,
\begin{align}
        \frac{\alpha}{m} \leq F_{f_X}^{-1}((i+1)/m) - F_{f_X}^{-1}(i/m) \leq \frac{\beta}{m} \\
            \frac{\alpha}{m} \leq F_{f_Y}^{-1}((i+1)/m) - F_{f_Y}^{-1}(i/m) \leq \frac{\beta}{m} \;.
\end{align}

Thus, if we note $\delta$ the maximal error on the estimation on the quantiles of $f_X$ and of $f_Y$ by $\mathbf{q}_X$ and $\mathbf{q}_Y$, we have for $\delta \leq \frac{\alpha}{2m}$
\begin{equation}
    \begin{aligned}
        \int_{\Omega}\| \hat{T}_{}(x) - \tilde{T}_{}(x)\|^2 dx
        &\leq 
        \underbrace{m}_{\text{Number of jumps}} \underbrace{ \underbrace{({2} \delta)}_{\text{Base}} \underbrace{(\frac{\beta}{m} + 2 \delta)^2}_{\text{Squared height}}}_{\text{Maximal error per jump}} + \underbrace{m}_{\text{number of constant parts}} \underbrace{\underbrace{({2} \delta)^2}_{\text{Squared height}} \underbrace{\frac{\beta}{m}}_{\text{Base}}}_{\text{Maximal error per constant part}} \\
        &\lesssim \frac{\delta}{m} + m \delta^3 + \delta^2 \;.
    \end{aligned}
\end{equation}

Thus, recalling that $\delta \leq \frac{\alpha}{2m}$ and denoting by $B$ the event $\bigl\{$ ``the maximal error on the estimation on the quantiles of $f_X$ and of $f_Y$ by $\mathbf{q}_X$ and $\mathbf{q}_Y$ is smaller $\delta$ ''  $\bigr\}$, we get 
\begin{equation}
    \begin{aligned}
        \E*{\| \hat{T}_{} - \tilde{T}_{}\|^2_{L^2(P_X)}} 
        &=
        \E*{\| \hat{T}_{} - \tilde{T}_{}\|^2_{L^2(P_X)} \1_B} +  \E*{\| \hat{T}_{} - \tilde{T}_{}\|^2_{L^2(P_X)} \1_{B^c}} \\
        &\lesssim \frac{\delta}{m} + m \delta^3 + \delta^2 + \PP*{B^c} \;,
    \end{aligned}
\end{equation}

which combined to the bias yields
\begin{equation}
\label{eq:kljnkjhbkjazf}
    \E*{\| \hat{T}_{} - T \|^2_{L^2(P_X)}}
    \lesssim
    \frac{1}{m^2} + \frac{\delta}{m} + m \delta^3 + \delta^2 + \PP*{B^c} \;.
\end{equation}

Obtaining rates now boils down to tuning $m$ and $\delta$ depending on the algorithm that is used to estimate the quantiles.

\paragraph{Upper bound under central DP.}

Under central DP, we use the algorithm of \cite{kaplan2022differentially} to privately estimate the $m$ quantiles (they use the add/remove neighboring relation but it remains private with replacement if one is willing to lose a factor $2$ on the privacy budget). Its statistical convergence analysis was done in \cite{lalanne2023private}, and we can readily apply their Theorem 3.5 to control $\PP*{B^c}$.

We take $m \asymp \min(\sqrt{n}, n \epsilon) / (\log(\min(\sqrt{n}, n \epsilon)))^{2M}$ and $\delta \asymp \frac{\log_2(m)^2 \log(\min(\sqrt{n}, n \epsilon))^M}{\min(\sqrt{n}, n \epsilon)}$ for a fixed $M > 1$.
If $\min(\sqrt{n}, n \epsilon)$ is big enough, all of the hypothesis of Theorem 3.5 in \cite{lalanne2023private} are satisfied, and
\begin{equation}
    \PP*{B^c} \lesssim C \paren*{n \sqrt{\min(\sqrt{n}, n \epsilon)} e^{-C\log(\min(\sqrt{n}, n \epsilon))^M}
    + \min(\sqrt{n}, n \epsilon) e^{-C \log(\min(\sqrt{n}, n \epsilon))^{2M}}}
\end{equation}
for a constant $C>0$ that depends on the problem parametters only.

Finally, since $\epsilon = \Omega(n^{-1+\gamma})$ for $\gamma > 0$, this expression can be made as $O\paren*{\frac{1}{n^2} + \frac{1}{n^4 \epsilon^4}}$ asymptotically.

Thus, by \Cref{eq:kljnkjhbkjazf}, 
\begin{equation}
    \E*{\| \hat{T}_{} - T \|^2_{L^2(P_X)}}
    \lesssim \textrm{PolyLog}(n, \epsilon) \times \max\paren*{\frac{1}{n}, \frac{1}{n^2 \epsilon^2}} \;.
\end{equation}

\subsection{Proof of \Cref{th:lower_bound}}
\label{proof_of_th:lower_bound}

The proof of the lower-bound heavily uses elements from the original lower-bound proof without differential privacy \cite{hutter2021minimax}, and from its adaptation with central differential privacy \cite{lalanne2025PrivateOTMaps}.
Those two previous articles used packing-type arguments to obtain their stated lower-bounds, but they operated under slightly modified assumptions (by imposing smoothness on the transport map itself rather than on the source and target distributions). This proof thus does two things :
\begin{itemize}
    \item It first checks that the packings used in \cite{hutter2021minimax} and \cite{lalanne2025PrivateOTMaps} are still compatible with our modified set of hypothesis (which directly allows to obtain the lower-bound without privacy and under central DP).
    \item It then extends the proof to cover the local DP case.
\end{itemize}

\paragraph{Packing from the literature.}
We recall the construction of \cite{lalanne2025PrivateOTMaps}, the one from \cite{hutter2021minimax} being the same except on the placement of the bump functions. In particular, checking that the construction presented here yields a packing that is compatible with our modified hypothesis straightforwardly yields the same result for the construction of \cite{hutter2021minimax} when $d\geq 2$ and also for their simpler packing when $d=1$.

Let $m$ be an integer and $N = m^d$. 
\cite{lalanne2025PrivateOTMaps} considers the grid $\paren*{\paren*{\frac{k_1}{m+1}, \dots, \frac{k_d}{m+1}}}_{1 \leq k_1, \dots, k_d \leq m}$ and enumerates its elements as $(p_1,\dots, p_N) \in (\R^d)^N$
with the property that 
$
\forall i, j \in \{1, \dots, N \}, i \neq j \implies \| p_i - p_j\|_{\infty} \geq \frac{1}{m+1} .
$
For a $C^\infty$ function $B$ defined over $\R$ and supported on $[-1, 1]$ taking positive values over $(-1,1)$ they define for some sufficiently small constant $a>0$ the function 
$
    \psi(x_1, \dots, x_d) = a \prod_{i=1}^d B \paren*{\frac{x_i}{2}}
$,
and for some $h\in(0,1]$ and any $\theta \in \{0, 1 \}^{N}$ they define the potential $ \phi_{\theta}:{\Omega}\to\R$ as  
$
    \phi_{\theta}(\cdot) := \frac{1}{2} \| \cdot\|^2 + h^{s+2} \sum_{i = 1}^{N} \theta_i \psi \paren{\frac{\cdot - p_i}{h}} .
$
To ensure that the functions $ \psi \paren*{\frac{\cdot - p_i}{h}}$ have disjoint supports for $i \in \{1,\dots,N\}$ they take $h < \frac{1}{2(m+1)} \lesssim \frac{1}{m}$.

The $\phi_{\theta}$'s have a few interesting properties (all of them formally proved in \cite{lalanne2025PrivateOTMaps}) : If $a$ is taken small enough, the eigenvalues of $\nabla^2 \phi_{\theta}(x)$ can be made arbitrarily close to $1$ uniformly in $x$ and in $\theta$ (which for this article means that we can tune $a$ to get the required strong convexity and smoothness of the potentials), and if again $a$ is small enough, for any $\theta$, $\nabla \phi_{\theta}$ is a $C^{\infty}$ diffeomorphism from $\Omega$ to $\Omega$ such that $\forall i, \nabla \phi_{\theta} \paren*{B_{\infty}\paren*{p_i, h}} = B_{\infty}\paren*{p_i, h}$ and that is the identity outside of those balls.

Then they define $P_X := \textrm{Unif}([0, 1]^d)$ and $P_{Y, \theta} := \nabla \phi_{\theta} \# P_X$ for any $\theta$, and the family $((P_X, P_{Y, \theta}))_{\theta}$ defines the packing.

\paragraph{Checking the new hypothesis.}

We must check that this packing satisfies \Cref{ass:regularity_transport_problem}. As we already have discussed, the assumptions on the convexity of the potential are already satisfied if $a$ is small enough. Furthermore, since the source distribution is always the uniform distribution, the condition on the Holder norm and uniform lower-bound on the density are trivially satisfied. We need to check that the same conditions hold for the target distribution.

We note $f_X$ the density of $P_X$ and $f_{Y, \theta}$ the density of $P_{Y, \theta}$ for any $\theta$ (that exist and can be taken continuous by the change of variable formula).
Let us fix $\theta$, the change of variable formula gives that, for any $y \in \Omega$, 
\begin{equation}
    f_Y(y) = \frac{1}{\det (\nabla^2 \phi_{\theta}(\nabla \phi_{\theta}^{-1}(y)))} f_X(\nabla \phi_{\theta}^{-1}(y)) \;.
\end{equation}
Furthermore, for any $y \in \Omega$, $f_X(\nabla \phi_{\theta}^{-1}(y)) = 1$, and thus 
$
    \forall y \in \Omega, f_Y(y) = \frac{1}{\det (\nabla^2 \phi_{\theta}(\nabla \phi_{\theta}^{-1}(y)))} \;.
$
We now seek to apply \Cref{lemma:holder_composition} with
\begin{equation}
    u(x) := \frac{1}{\det(\nabla^2 \phi_\theta(x))} - 1,
\qquad
v(y) := \nabla \phi_\theta^{-1}(y)
\end{equation}
and proceed that checking its hypotheses.

We start by studying $v$. We note that $\nabla \phi_\theta$ is a $C^{\infty}$ diffeomorphism of $\Omega$ and that
\begin{equation}
    \nabla \phi_\theta(x)
=
x + a h^{s+1} \sum_{i=1}^N \theta_i \nabla \psi \left(\frac{x-p_i}{h}\right).
\end{equation}
Since the supports of the functions $\psi\bigl(\frac{\cdot-p_i}{h}\bigr)$ are disjoint and $\psi\in C^\infty_c$,
we have
\begin{equation}
    \|\nabla \phi_\theta - \mathrm{Id}\|_{C^{s+1}(\Omega)} \lesssim a,
\end{equation}
uniformly in $\theta$ and $h\leq 1$. Thus, for $a$ small enough, by \Cref{lemma:holder_inversion}
\begin{equation}
    \|\nabla \phi_\theta^{-1}\|_{C^{s+1}(\Omega)} \le C,
\end{equation}
where $C$ does not depend on $\theta$ or $h$.

We now turn to the study of $u$. By the definition of $\phi_\theta$, we have
\begin{equation}
    \nabla^2 \phi_\theta(x)
=
I + a h^{s} \sum_{i=1}^N \theta_i \nabla^2 \psi \left(\frac{x-p_i}{h}\right).
\end{equation}

Since the supports of the bumps $\psi\bigl(\frac{\cdot-p_i}{h}\bigr)$ are disjoint, for every $x\in\Omega$ there exists at most one index
$i(x)\in\{1,\dots,N\}$ such that $\nabla^2\psi\bigl(\frac{x-p_{i(x)}}{h}\bigr)\neq 0$. Hence, for all $x\in\Omega$,
\begin{equation}
\label{eq:E_theta_def}
\nabla^2 \phi_\theta(x)= I + E_\theta(x),
\qquad
E_\theta(x):= \theta_{i(x)} a h^{s}\,\nabla^2\psi\!\left(\frac{x-p_{i(x)}}{h}\right),
\end{equation}
with the convention $E_\theta(x)=0$ if $x$ is outside the union of the bump supports.

We define the scalar function $F(M):=\frac{1}{\det(I+M)}-1$
and thus have $u(x) = F\bigl(E_\theta(x)\bigr)$  

We first bound $E_\theta$ in $C^{s}$. Let $m:=\lfloor s\rfloor$ and $r:=s-m\in[0,1)$.
For any multi-index $\alpha$ with $|\alpha|\le m$ and any $x$ such that $E_\theta(x)\neq 0$ (i.e. $x$ belongs to one bump),
\begin{equation}
\label{eq:E_derivative_bound}
\partial^\alpha E_\theta(x)
=
a h^{s}\, h^{-|\alpha|}\,(\partial^\alpha\nabla^2\psi)\!\left(\frac{x-p_{i(x)}}{h}\right),
\end{equation}
hence, since $h\le 1$ and $|\alpha|\le m\le s$,
\begin{equation}
\label{eq:E_Cm_bound}
\|\partial^\alpha E_\theta\|_{L^\infty(\Omega)}
\lesssim
a\, h^{s-|\alpha|}
\le a.
\end{equation}
Moreover, if $r>0$ and $|\alpha|=m$, for $x,y$ inside the same bump support,
\begin{equation}
\begin{aligned}
   & \frac{\|\partial^\alpha E_\theta(x)-\partial^\alpha E_\theta(y)\|}{\|x-y\|^{r}} \\
&=
a h^{s-m}\,
\frac{\|(\partial^\alpha\nabla^2\psi)((x-p_i)/h)-(\partial^\alpha\nabla^2\psi)((y-p_i)/h)\|}{\|x-y\|^{r}}
\lesssim
a h^{s-m-r}
=
a,
\end{aligned}
\end{equation}
since $s-m=r$. If $x$ and $y$ lie in two different bumps, then we may swap any of them by another value in the same bump in the other one to keep the numerator of the previous ratio constant such that the denominator is smaller, granted that $h < \frac{1}{C (m+1)}$ where $C$ is a constant that depend on the dimension only (which is possible to take by construction). $C$ is selected so that the diameter of the support of each bump is bigger than the minimal distance between any pair of different bump supports.
Therefore,
\begin{equation}
\label{eq:E_Cs_bound}
\|E_\theta\|_{C^{s}(\Omega)} \lesssim a,
\end{equation}
uniformly in $\theta$ and $h$ as long as $h < \frac{1}{4 (m+1)}$.

We now control $u = F\circ E_\theta$ in $C^{s}$ by a direct Fa\`a di Bruno argument.
Note that $F$ is $C^\infty$ on $\{M:\|M\|\le 1/2\}$, and for $a$ small enough \Cref{eq:E_Cs_bound} implies
$\|E_\theta\|_{L^\infty}\le 1/2$, so $F(E_\theta(x))$ is well-defined and smooth for all $x\in\Omega$.
Furthermore, since $F(0)=0$, it suffices to bound $\|F\circ E_\theta\|_{C^{s}}$.

Let $\alpha$ be a multi-index with $|\alpha|\le m$.
By the multivariate Fa\`a di Bruno formula, $\partial^\alpha(F\circ E_\theta)$ is a finite sum (with coefficients depending only on
$d$ and $|\alpha|$) of terms of the form
\begin{equation}
\label{eq:fdb_F_comp_E}
D^k F(E_\theta(x))\Bigl[\partial^{\gamma_1}E_\theta(x),\dots,\partial^{\gamma_k}E_\theta(x)\Bigr],
\quad
1\le k\le |\alpha|,
\quad
\sum_{j=1}^k |\gamma_j|=|\alpha|.
\end{equation}
Since $\|E_\theta\|_{L^\infty}\le 1/2$, we have $\|D^kF(E_\theta)\|_{L^\infty}\le C_k$ for constants depending only on $k$ and $d$.
Using \Cref{eq:E_Cm_bound} for all $|\gamma_j|\le|\alpha|\le m$, we obtain
\begin{equation}
\label{eq:integer_bound_u}
\|\partial^\alpha u\|_{L^\infty}
\lesssim
\sum_{k=1}^{|\alpha|}
\prod_{j=1}^k \|\partial^{\gamma_j}E_\theta\|_{L^\infty}
\lesssim
\sum_{k=1}^{|\alpha|} a^{k}
\lesssim a,
\end{equation}
uniformly in $\theta$ and $h$ for $a$ small enough.

If $r=0$ we are done. Otherwise, fix $|\alpha|=m$. Using again the representation \Cref{eq:fdb_F_comp_E}, each term is a product of
bounded coefficients $D^kF(E_\theta(\cdot))$ with multilinear forms in the derivatives $\partial^{\gamma_j}E_\theta(\cdot)$.
Using iteratively the same product rule as in \Cref{eq:jhvbjhgvqsdf} and the bounds
\[
\|D^kF(E_\theta)\|_{C^{r}} \lesssim 1+\|E_\theta\|_{C^{r}}
\lesssim 1,
\qquad
\|\partial^{\gamma}E_\theta\|_{C^{r}}\lesssim a \quad (|\gamma|\le m),
\]
(which follow from \Cref{eq:E_Cs_bound}), we obtain
\begin{equation}
\label{eq:holder_bound_u}
\|\partial^\alpha u\|_{C^{r}}
\lesssim
\sum_{k=1}^{m} a^{k}
\lesssim a,
\end{equation}
uniformly in $\theta$ and $h\le 1$.

Combining \Cref{eq:integer_bound_u} and \Cref{eq:holder_bound_u} yields
\begin{equation}
    \left\| \frac{1}{\det(\nabla^2 \phi_\theta)} - 1 \right\|_{C^{s}(\Omega)}
=
\|u\|_{C^{s}(\Omega)}
\lesssim a,
\end{equation}
uniformly in $\theta$ and $h$ for $a$ small enough.

Applying \Cref{lemma:holder_composition}, we conclude that for $a$ small enough,
\begin{equation}
    \| f_{Y,\theta} - C \|_{C^{s}(\Omega)}
=
C \bigl\| u \circ v \bigr\|_{C^{s}(\Omega)}
\lesssim
\|u\|_{C^{s}(\Omega)} \bigl(1+\|v\|_{C^{s+1}(\Omega)}^{s}\bigr)
\lesssim
a.
\end{equation}

In particular, for any $\delta>0$, choosing $a>0$ small enough ensures
\begin{equation}
    \sup_{\theta\in\{0,1\}^N}
\| f_{Y,\theta} - C \|_{C^{s}(\Omega)}
\leq \delta,
\end{equation}
which shows that the packing satisfies \Cref{ass:regularity_transport_problem}.

\paragraph{Lower-bound with central DP.}

The previous reasoning shows that the packing is a valid packing for our set of hypotheses (up to a rescaling of $h$ and by a constant and taking $a$ small enough). The rest of the proof of \cite{lalanne2025PrivateOTMaps} (i.e. their paragraph "Conclusion of the proof.") can be applied directly to prove the stated lower-bound under central DP.

\paragraph{Lower-bound with local DP.}

With local differential privacy we use Assouad's lemma (see lemma F.3 from \cite{lalanne2025PrivateOTMaps} for the formal version that is an adaptation of the one from \cite{acharya2021differentially}).

Indeed, since by Lemma F.1 from \cite{lalanne2025PrivateOTMaps} we have 
\begin{equation}
        \|\nabla \phi_{\theta} - \nabla \phi_{\theta'} \|_{L^2(P_X)}^2
        \gtrsim \ham (\theta_1, \theta_2) h^{2 (s+1) + d} \;,
    \end{equation}
Assouad's lemma reads
\begin{equation}
    \label{jhbkqjshdf}
    \begin{aligned}
        &\sup_{\theta \in \{0, 1 \}^{N}} 
        \E[X_1, \dots, X_n \iid P_X, Y_1, \dots, Y_n \iid P_{Y, \theta}]{\|\nabla \phi_{\theta} - \hat{T} \|_{L^2(P_X)}^2 } \\
        &\qquad\gtrsim h^{2 (s+1) + d} \sum_{i=1}^{N} \paren*{\PP*[{{\theta_{-i}}}] {\hat{\theta}^i \neq 0} + \PP*[{{\theta_{+i}}}] {\hat{\theta}^i \neq 1}},
    \end{aligned}
    \end{equation}
    where $\mathbb P_{{{\theta_{+i}}}}$ and $\mathbb P_{{{\theta_{-i}}}}$ are the mixture distributions
    \begin{equation}
        \mathbb P_{{{\theta_{+i}}}} := \frac{1}{2^{N-1}} \sum_{\theta: \theta^{(i)} = 1} P_X^{\otimes n} \otimes P_{Y, \theta}^{\otimes n}, 
        \quad \text{ and } 
        \quad 
        \mathbb P_{{{\theta_{-i}}}} := \frac{1}{2^{N-1}} \sum_{\theta: \theta^{(i)} = 0} P_X^{\otimes n} \otimes P_{Y, \theta}^{\otimes n},\;,
    \end{equation}
    and where 
    \begin{equation}
        \hat{\theta} = \argmin_{\theta \in \{0,1\}^N} \|\nabla \phi_{\theta} -\hat T  \|_{L^2(P_X)}^2
    \end{equation}
    with arbitrary choice in the argmin in case of ties, and where the upper-case $i$ refers to the $i$-th component of the referenced vector.

    In particular, if $\hat T$ is produced by an $\epsilon$-locally differentially
private mechanism, then $\hat\theta$, being a deterministic post-processing of
$\hat T$, is itself the output of an $\epsilon$-local DP channel. We will rely
on the contraction result of \cite{duchi2013local}. To simplify notation, we
assume in the remainder of the proof that $\hat T$ is obtained by applying a
deterministic mapping to the output of an $\epsilon$-local DP mechanism.

    For a fixed $i$, we build a coupling $\mathcal{C}$ between $\mathbb P_{{{\theta_{+i}}}}$ and $ \mathbb P_{{{\theta_{-i}}}}$ as follows. We first sample uniformly at random all all the components of a $\theta$ except for the $i$-th component, and then we make two copies of $\theta$, on with a $1$ in the $i$-th component (named $\theta$), and one with a $0$ (named $\theta'$). Then we construct
    \begin{equation}
        X_1, \dots, X_n, Y_1, \dots, Y_n \iid  \underbrace{P_X^{\otimes n} \otimes P_{Y, \theta}^{\otimes n}}_{=: \mathbb P_{\theta} }, \quad 
        X_1', \dots, X_n', Y_1', \dots, Y_n' \iid  \underbrace{P_X^{\otimes n} \otimes P_{Y, \theta'}^{\otimes n}}_{=: \mathbb P_{\theta'} } \;,
    \end{equation}this coupling doesn't matter at this stage.

    Then,
    \begin{equation}
        \begin{aligned}
            &\PP*[{{\theta_{-i}}}] {\hat{\theta}^i \neq 0} + \PP*[{{\theta_{+i}}}] {\hat{\theta}^i \neq 1}\\
            &=
            \E*[\theta, \theta'] { \PP*[{{\theta}}] {\hat{\theta}^i \neq 0} + \PP*[{{\theta'}}] {\hat{\theta}^i \neq 1} | \theta, \theta'} \;.
        \end{aligned}
    \end{equation}

Let $Z$ denote the (possibly interactive) transcript produced by the
$\epsilon$-locally differentially private mechanism applied to the data, and
let $\hat{\theta}^i = f_i(Z)$ be the $i$-th coordinate of the estimator, viewed as
a deterministic function of $Z$. For $\theta \in \{0,1\}^N$, we denote by
$\PP[\theta]{Z}$ the distribution of $Z$ when the data are distributed according
to $\mathbb P_{\theta}$.

Then, for any pair $(\theta,\theta')$ in the previous coupling, Le Cam's lemma and
Pinsker's inequality yield
\begin{equation}
\begin{aligned}
    \PP[\theta]{\hat{\theta}^i \neq 0} + \PP[\theta']{\hat{\theta}^i \neq 1}
    &\geq 1 - \tv \left(f_i \# \PP[\theta]{Z},\, f_i\# \PP[\theta']{Z}\right) \\
    &\geq 1 - \sqrt{\kl \left(f_i\# \PP[\theta]{Z} || f_i\# \PP[\theta']{Z}\right)/2}.
\end{aligned}
\end{equation}

Furthermore, by Corollary 1 in \cite{duchi2013local} and the data processing inequality \cite{van2014renyi} for KL,
\begin{equation}
    \kl \left(f_i\# \PP[\theta]{Z} || f_i\# \PP[\theta']{Z}\right)
    \leq 
    4 (e^{\epsilon}-1)^2 \paren*{n \tv (P_X, P_X)^2 + n \tv (P_{Y, \theta}, P_{Y, \theta'})^2} .
\end{equation}

Finally, by Lemma F.3 from \cite{lalanne2025PrivateOTMaps}, we know that 
$\tv (P_{Y, \theta}, P_{Y, \theta'}) \lesssim \ham(\theta, \theta') h^{s + d}$ , and the previous coupling we always have $\ham(\theta, \theta') = 1$. Thus, is $\epsilon$ is smaller than a constant, and if $h \asymp (\sqrt n \epsilon)^{-\frac{1}{(s+d)}}$ for a small enough multiplicative constant, we get
\begin{equation}
    \sup_{\theta \in \{0, 1 \}^{N}} 
        \E[X_1, \dots, X_n \iid P_X, Y_1, \dots, Y_n \iid P_{Y, \theta}]{\|\nabla \phi_{\theta} - \hat{T} \|_{L^2(P_X)}^2 }
        \gtrsim h^{2 (s+1) + d} N ,
\end{equation}
and since $N \asymp h^{-d}$, we get that 
\begin{equation}
    \sup_{\theta \in \{0, 1 \}^{N}} 
        \E[X_1, \dots, X_n \iid P_X, Y_1, \dots, Y_n \iid P_{Y, \theta}]{\|\nabla \phi_{\theta} - \hat{T} \|_{L^2(P_X)}^2 }
        \gtrsim (\sqrt n \epsilon)^{-\frac{2 (s+1)}{(s+d)}} ,
\end{equation}
which concludes the proof of the lower-bound under local DP

\paragraph{Lemmas for the proof.}

\begin{lemma}
\label{lemma:holder_composition}
    If $u$ and $v$ are in $C^{\infty}(\Omega)$ with $v(\Omega) \subseteq \Omega$ and $s \geq 0$, then 
    \begin{equation}
        \| u \circ v\|_{C^{s}(\Omega)} \lesssim \| u \|_{C^{s}(\Omega)} (1 + \max (\| v\|_{C^{s}(\Omega)}^s, \| v\|_{C^{1}(\Omega)}^s)) \;.
    \end{equation} 
\end{lemma}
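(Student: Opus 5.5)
The plan is a Faà di Bruno / chain-rule argument, treating the integer part and the fractional Hölder part of $s$ separately. Write $s = m + r$ with $m = \lfloor s \rfloor$ and $r \in [0,1)$. Recall that $\| w \|_{C^s(\Omega)}$ is the maximum of $\|\partial^\alpha w\|_{L^\infty}$ over $|\alpha| \le m$ and of the $r$-Hölder seminorms of $\partial^\alpha w$ for $|\alpha| = m$ (when $r>0$). The zeroth-order term is immediate, since $v(\Omega) \subseteq \Omega$ gives $\|u\circ v\|_{L^\infty} \le \|u\|_{L^\infty}$. This is why the bound carries the additive $1$ and no $v$-dependence there.

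\textbf{Integer derivatives.} For $1 \le |\alpha| \le m$, the multivariate Faà di Bruno formula writes $\partial^\alpha (u\circ v)(x)$ as a finite sum, with combinatorial coefficients depending only on $d$ and $|\alpha|$, of terms
\[
(D^k u)(v(x))\bigl[\partial^{\gamma_1} v(x), \dots, \partial^{\gamma_k} v(x)\bigr], \qquad 1 \le k \le |\alpha|, \quad \textstyle\sum_j |\gamma_j| = |\alpha|, \quad |\gamma_j|\ge 1 .
\]
Each factor $D^k u(v(x))$ is bounded by $\|u\|_{C^s}$, again because $v$ maps into $\Omega$. Each $\partial^{\gamma_j} v$ is bounded by $\|v\|_{C^{|\gamma_j|}}$, which is at most $\max(\|v\|_{C^s}, \|v\|_{C^1})$. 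Hence each term is bounded by $\|u\|_{C^s} \max(\|v\|_{C^s},\|v\|_{C^1})^k$ with $k \le m \le s$. Using $t^k \le 1 + t^s$ for $t \ge 0$ and $0\le k\le s$ absorbs all powers into $1 + \max(\cdot)^s$.

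\textbf{Fractional part.} If $r > 0$, fix $|\alpha| = m$ and bound the $r$-Hölder seminorm of each Faà di Bruno term. The key ingredient is a product rule for Hölder seminorms:
\[
[fg]_r \le \|f\|_\infty [g]_r + [f]_r \|g\|_\infty .
\]
It reduces the problem to controlling $[D^k u \circ v]_r$ and $[\partial^{\gamma_j} v]_r$.

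\textbf{Composition factor.} When $k<m$, $D^k u$ is Lipschitz with constant $\|u\|_{C^s}$, so on the bounded domain $\Omega$ one gets $[D^k u\circ v]_r \lesssim \|u\|_{C^s}\|v\|_{C^1}^r$. This uses convexity of $\Omega$ for the mean-value step and $|x-y|\le \sqrt d$ to pass from Lipschitz to $r$-Hölder. When $k = m$, one has directly $[D^m u \circ v]_r \le [D^m u]_r \|v\|_{C^1}^r$.

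\textbf{Factors of $v$.} For $|\gamma_j| < m$ these are Lipschitz with constant at most $\|v\|_{C^{m}}$, hence $r$-Hölder. For $|\gamma_j| = m$, which forces $k=1$, the seminorm is bounded by $\|v\|_{C^s}$ itself. Collecting terms, each product has total degree in $v$-norms at most $k + r \le m + r = s$ (or at most $k+1 \le s$ in the Lipschitz cases). Applying $t^p \le 1 + t^s$ for $0 \le p \le s$ once more yields the claimed bound.

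The main obstacle is the bookkeeping of exponents in the fractional step. One must make sure that every monomial in $\|v\|_{C^1}$ and $\|v\|_{C^s}$ has total degree at most $s$, so that it fits under $1+\max(\|v\|_{C^s},\|v\|_{C^1})^s$. A second point to watch is that the chain-rule Hölder estimate for $D^k u\circ v$ uses convexity of $\Omega=[0,1]^d$, which holds here. If degree counting fails in some term because the Lipschitz shortcut produces degree $k+1 > s$, I would instead interpolate: bound $[\partial^\gamma v]_r$ by $\|\partial^\gamma v\|_\infty^{1-r}\|\nabla\partial^\gamma v\|_\infty^{r}$, trading a full power for an $r$-power so the degree stays at most $s$.
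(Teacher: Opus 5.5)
Your proposal is correct and follows essentially the same route as the paper: Faà di Bruno for the derivatives of order at most $\lfloor s\rfloor$, then the Hölder product rule together with $[D^k u\circ v]_r \le [D^k u]_r\,\mathrm{Lip}(v)^r$ for the fractional part. Your exponent bookkeeping is slightly more careful than the paper's. You use $t^p \le 1+t^s$ for $0\le p\le s$, and for $k<m$ you control $[D^k u]_r$ via Lipschitz continuity on the bounded convex cube. The interpolation fallback you mention is never actually needed.
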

\begin{proof}
    Let $m := \lfloor s \rfloor$. Let $\alpha$ be a multi index of size at most $m$. By the Faà di Bruno formula, for any $x \in \Omega$, $\partial^{\alpha}(u \circ v)(x)$ is a finite sum (with coefficients and size independent of $u$ and $v$) of terms of the form $\partial^k u(v(x)) \Pi_{j = 1}^k \partial^{\gamma_j} v(x)$ where $1 \leq k \leq m$ and where $\sum_j | \gamma_j| = m$, which gives
    \begin{equation}
        \|\partial^{\alpha}(u \circ v) \|_{L^{\infty}} \lesssim \sum_{k = 1}^m \|\partial^{k}(u) \|_{L^{\infty}} \|v \|_{C^m}^k \;,
    \end{equation}
    and thus 
    \begin{equation}
        \|\partial^{\alpha}(u \circ v) \|_{L^{\infty}} \lesssim \|u\|_{C^s} \|v \|_{C^m}^m
        \leq \|u\|_{C^s} (1 + \|v \|_{C^{s}}^s)\;,
    \end{equation}
    which in turn yields the upper-bound for the ``integer" part of the Holder norm. We now look at the non-integer remainder.
    We define $r := s - \lfloor s \rfloor$. If $r = 0$, the proof is over. Otherwise, we fix a multi index $\alpha$ of size exactly $\lfloor s \rfloor$. Again, the Faà di Bruno formula says that for any $x \in \Omega$, $\partial^{\alpha}(u \circ v)$ is a finite sum (with coefficients and size independent of $u$ and $v$) of terms of the form $\partial^k u(v(x)) \Pi_{j = 1}^k \partial^{\gamma_j} v(x)$ where $1 \leq k \leq m$ and where $\sum_j | \gamma_j| = m$. Furthermore, if we look at a fixed term in this sum,
    \begin{equation}
    \label{eq:jhvbjhgvqsdf}
        \begin{aligned}
            \|x \mapsto \partial^k u(v(x)) \Pi_{j = 1}^k \partial^{\gamma_j} v(x)\|_{C^r}
            &\leq
            \|x \mapsto \partial^k u(v(x)) \|_{L^{\infty}}
            \|x \mapsto \Pi_{j = 1}^k \partial^{\gamma_j} v(x)\|_{C^r} \\
            &+
            \|x \mapsto \partial^k u(v(x)) \|_{C^r}
            \|x \mapsto \Pi_{j = 1}^k \partial^{\gamma_j} v(x)\|_{L^{\infty}} \;.
        \end{aligned}
    \end{equation}
    Indeed, if $f$ and $g$ are two real-valued functions and $x \neq y$,
    \begin{equation}
    \begin{aligned}
        \frac{|(fg)(x) - (fg)(y)|}{|x - y|^r} &= \frac{|f(x)(g(x) - g(y)) +  (f(x) - f(y)) g(y)|}{|x - y|^r} \\
        &\leq 
        | f(x) | \frac{|g(x) - g(y)|}{|x - y|^r} + | g(y) |\frac{|f(x) - f(y)|}{|x - y|^r} \;.
        \end{aligned}
    \end{equation}
    Furthermore, 
    \begin{equation}
        \|x \mapsto \partial^k u(v(x)) \|_{L^{\infty}} \leq \| u \|_{C^{\lfloor s \rfloor}} \;,
    \end{equation}
    \begin{equation}
        \begin{aligned}
            \|x \mapsto \partial^k u(v(x)) \|_{C^r}
            &\leq 
            \|\partial^k u \|_{C^r} \mathrm{Lip}(v)^r \\
            &\lesssim \|u \|_{C^s} \|v \|_{C^1}^r \;,
        \end{aligned}
    \end{equation}
    \begin{equation}
        \begin{aligned}
            \|x \mapsto \Pi_{j = 1}^k \partial^{\gamma_j} v(x)\|_{L^{\infty}}
            &\leq 
            \|v\|_{C^{\lfloor s \rfloor}}^{\lfloor s \rfloor} \;,
        \end{aligned}
    \end{equation}
    and 
    \begin{equation}
        \begin{aligned}
            \|x \mapsto \Pi_{j = 1}^k \partial^{\gamma_j} v(x)\|_{C^r} 
            \lesssim 
            \|v\|_{C^{s}}^{\lfloor s \rfloor}
        \end{aligned}
    \end{equation}
    where we used iteratively the same product rule as in \Cref{eq:jhvbjhgvqsdf}.
    Combining this into \Cref{eq:jhvbjhgvqsdf} and exploiting the fact that there are only a finite number of terms in de Faà di Bruno decomposition yields
    \begin{equation}
        \| u \circ v\|_{C^{s}(\Omega)} \lesssim \| u \|_{C^{s}(\Omega)} (1 + \max (\| v\|_{C^{s}(\Omega)}^s, \| v\|_{C^{1}(\Omega)}^s)) \;.
    \end{equation}
    This concludes the proof.
\end{proof}

\begin{lemma}
    \label{lemma:holder_inversion}
    Using the same notations as in the proof, if $\nabla \phi_\theta$ satisfies
    \begin{equation}
    \|\nabla \phi_\theta - \mathrm{Id}\|_{C^{s+1}(\Omega)} \leq \epsilon,
\end{equation}
for a small enough $\epsilon$ (depending on $s$, $d$ and $\Omega$), then 
\begin{equation}
    \|\nabla \phi_\theta^{-1}\|_{C^{s+1}(\Omega)} \leq C
\end{equation}
where $C$ is a constant depending only on $d$, $s$ and $\Omega$.
\end{lemma}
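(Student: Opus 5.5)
We prove \Cref{lemma:holder_inversion} with a quantitative inverse function theorem. Write $G := \nabla \phi_\theta$ and $R := G - \mathrm{Id}$, so that $\|R\|_{C^{s+1}(\Omega)} \leq \epsilon$.

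\textbf{Step 1: well-posedness and first-order control.} The paper already records that $G$ is a $C^\infty$ diffeomorphism of $\Omega$ onto itself, so $G^{-1}$ is well defined. The relation $G^{-1}(y) = y - R(G^{-1}(y))$ immediately gives
\[
\|G^{-1}\|_{L^\infty} \leq \sup_{y \in \Omega} \|y\| + \epsilon .
\]
For the Jacobian, $DG = I + DR$ with $\|DR\|_{L^\infty} \leq \epsilon \leq 1/2$. A Neumann series then gives $\|(DG)^{-1}\|_{L^\infty} \leq 2$, hence
\[
D(G^{-1}) = (DG)^{-1} \circ G^{-1} .
\]
This yields $\|G^{-1}\|_{C^1} \leq C$ and $\mathrm{Lip}(G^{-1}) \leq 2$.

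\textbf{Step 2: higher derivatives by induction.} Let $m := \lfloor s+1 \rfloor$ and $r := s+1-m$. Differentiate the identity $D(G^{-1}) = \mathrm{Inv}(I + DR) \circ G^{-1}$, where $\mathrm{Inv}(M) = M^{-1}$ is smooth with bounded derivatives of every order on $\{\|M - I\| \leq 1/2\}$. By Faà di Bruno, $\partial^\alpha D(G^{-1})$ with $|\alpha| = k-1$ is a finite sum of products. Each product has two kinds of factors:
\begin{itemize}
\item derivatives of $\mathrm{Inv}(I + DR)$ up to order $k-1$, evaluated at $G^{-1}$ and bounded through $\|R\|_{C^k}$;
\item derivatives of $G^{-1}$ of order at most $k-1$.
\end{itemize}
Inducting on $k \leq m$, each $\|D^k G^{-1}\|_{L^\infty}$ is bounded by a polynomial in previously bounded quantities, with constants depending only on $d$ and $s$. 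In fact, the right-hand side involves only $D^j G^{-1}$ with $j < k$, so the induction closes without absorption.

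\textbf{Step 3: the fractional Hölder part.} This is the step where the most care is needed. We must control the $r$-Hölder seminorm of $D^m G^{-1}$. We reuse the product rule \Cref{eq:jhvbjhgvqsdf} from the proof of \Cref{lemma:holder_composition}: for each Faà di Bruno term we bound the $C^r$ seminorm of the product by sup-norms times $C^r$ seminorms of the individual factors. For factors of the form $h \circ G^{-1}$, we use
\[
[h \circ G^{-1}]_{C^r} \leq [h]_{C^r}\, \mathrm{Lip}(G^{-1})^r \leq 2^r [h]_{C^r} .
\]
Here $h$ is built from $D^{\leq m} R$, whose $C^r$ seminorm is controlled by $\|R\|_{C^{s+1}}$. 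The factors that are derivatives of $G^{-1}$ of order below $m$ are Lipschitz by Step 2, hence $C^r$ on the bounded set $\Omega$.

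\textbf{Step 4: domain issue.} One obstacle is that $\Omega$ is a cube, and Hölder seminorms are taken over pairs of points. Since $\Omega$ is convex, mean-value arguments apply along segments, so no extension argument is needed.

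\textbf{Conclusion.} Summing the finitely many terms gives $\|G^{-1}\|_{C^{s+1}(\Omega)} \leq C(d, s, \Omega)$ for all $\epsilon$ below a threshold such as $\epsilon \leq 1/2$, uniformly in $\theta$ and $h$.
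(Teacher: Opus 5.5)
Your proof is correct and follows essentially the same route as the paper: the Neumann-series bound $\|(I+DR)^{-1}\|\le 2$, then a Fa\`a di Bruno induction on the integer-order derivatives of $G^{-1}$, then the $C^r$ product rule together with composition with the $2$-Lipschitz inverse for the fractional H\"older part. The only difference is cosmetic. You differentiate the explicit formula $D(G^{-1})=\mathrm{Inv}(I+DR)\circ G^{-1}$, so the induction closes with no top-order term to isolate. The paper instead differentiates the implicit identity $G^{-1}+R\circ G^{-1}=\mathrm{Id}$ and solves for $\partial^\alpha G^{-1}$ by inverting $I+DR\circ G^{-1}$ at each order.
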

\begin{proof}
   Write
\begin{equation}
\nabla \phi_\theta(x) = x + w_\theta(x),
\qquad
w_\theta := \nabla \phi_\theta - \mathrm{Id}.
\end{equation}
By assumption,
\begin{equation}
\label{eq:ass_w_small}
\|w_\theta\|_{C^{s+1}(\Omega)} \le \epsilon.
\end{equation}

Since
$
D(\nabla\phi_\theta)(x) = I + Dw_\theta(x),
$ 
we have
$
\|Dw_\theta\|_{L^\infty(\Omega)} \le \|w_\theta\|_{C^{s+1}(\Omega)} \le \epsilon
$ by definition of the Holder norms.
Assume $\epsilon\le 1/2$. Then for all $x\in\Omega$,
\begin{equation}
\|D(\nabla\phi_\theta)(x) - I\| \le \frac12,
\end{equation}
hence $D(\nabla\phi_\theta)(x)$ is invertible and
\begin{equation}
\label{eq:jac_inv_bound}
\bigl\|D(\nabla\phi_\theta)(x)^{-1}\bigr\| \le \frac{1}{1-\|D(\nabla\phi_\theta)(x)-I\|} \le 2.
\end{equation}
In particular, $\nabla\phi_\theta$ is a $C^1$ diffeomorphism of $\Omega$ onto itself and, by the inverse function theorem,
$\nabla\phi_\theta^{-1}$ is $C^1$. Moreover, for all $y\in\Omega$,
\begin{equation}
\label{eq:Ds_formula}
D(\nabla\phi_\theta^{-1})(y)
=
\Bigl(D(\nabla\phi_\theta)\bigl(\nabla\phi_\theta^{-1}(y)\bigr)\Bigr)^{-1},
\end{equation}
so by \eqref{eq:jac_inv_bound},
\begin{equation}
\label{eq:lip_inverse}
\|D(\nabla\phi_\theta^{-1})\|_{L^\infty(\Omega)} \le 2,
\end{equation}
i.e.\ $\nabla\phi_\theta^{-1}$ is uniformly Lipschitz.

We will work from the implicit equation that defines the inverse.
For all $y\in\Omega$,
\begin{equation}
\label{eq:implicit_identity}
\nabla\phi_\theta\bigl(\nabla\phi_\theta^{-1}(y)\bigr) = y.
\end{equation}
Using $\nabla\phi_\theta = \mathrm{Id}+w_\theta$, this rewrites as
\begin{equation}
\label{eq:implicit_identity_2}
\nabla\phi_\theta^{-1}(y) + w_\theta\bigl(\nabla\phi_\theta^{-1}(y)\bigr) = y.
\end{equation}

We now look at integer derivatives up to order $\lfloor s+1\rfloor$.
Let
$
m := \lfloor s+1\rfloor.
$
Let $\alpha$ be a multi-index with $1\le |\alpha| \le m$. Differentiating \eqref{eq:implicit_identity_2} yields
\begin{equation}
\label{eq:diff_identity_alpha}
\partial^\alpha \nabla\phi_\theta^{-1}(y) + \partial^\alpha\!\Bigl(w_\theta\circ\nabla\phi_\theta^{-1}\Bigr)(y) = 0.
\end{equation}
By the Fa\`a di Bruno formula, $\partial^\alpha(w_\theta\circ\nabla\phi_\theta^{-1})$ is a finite sum of terms
\begin{equation}
\label{eq:fdb_general_term}
\partial^k w_\theta\bigl(\nabla\phi_\theta^{-1}(y)\bigr)\,
\prod_{j=1}^k \partial^{\gamma_j}\nabla\phi_\theta^{-1}(y),
\qquad
1\le k\le|\alpha|,
\qquad
\sum_{j=1}^k |\gamma_j| = |\alpha|.
\end{equation}
Among these terms, the ones with $k=1$ and $|\gamma_1|=|\alpha|$ equal
$Dw_\theta(\nabla\phi_\theta^{-1}(y))\,\partial^\alpha \nabla\phi_\theta^{-1}(y)$.
Thus \eqref{eq:diff_identity_alpha} can be rewritten as
\begin{equation}
\label{eq:solve_alpha}
\Bigl(I + Dw_\theta\bigl(\nabla\phi_\theta^{-1}(y)\bigr)\Bigr)\,\partial^\alpha \nabla\phi_\theta^{-1}(y)
=
- R_\alpha(y),
\end{equation}
where $R_\alpha(y)$ is a finite sum of terms of the form \eqref{eq:fdb_general_term} with $k\ge 2$, hence involving only
derivatives of $\nabla\phi_\theta^{-1}$ of order $<|\alpha|$.

Since $\|Dw_\theta\|_{L^\infty}\le\epsilon\le 1/2$, we have
\begin{equation}
\label{eq:inv_matrix_bound}
\sup_{y\in\Omega}\left\|\Bigl(I+Dw_\theta(\nabla\phi_\theta^{-1}(y))\Bigr)^{-1}\right\|
\le 2.
\end{equation}
Taking $L^\infty$ norms in \eqref{eq:solve_alpha} gives
\begin{equation}
\label{eq:alpha_Linf_bound}
\|\partial^\alpha \nabla\phi_\theta^{-1}\|_{L^\infty(\Omega)}
\le
2\,\|R_\alpha\|_{L^\infty(\Omega)}.
\end{equation}
We now argue by induction on $l:=|\alpha|$. The case $l=2$ follows from \eqref{eq:lip_inverse}.
Assume that for some $l\in\{2,\dots,m\}$,
\begin{equation}
\label{eq:induction_hyp}
\max_{|\gamma|\le l-1}\|\partial^\gamma \nabla\phi_\theta^{-1}\|_{L^\infty(\Omega)} \le C_{l-1},
\end{equation}
and let $|\alpha|=l$. We then obtain
\begin{equation}
\|R_\alpha\|_{L^\infty(\Omega)}
\lesssim
\sum_{k=2}^l
\|\partial^k w_\theta\|_{L^\infty(\Omega)}\,
C_{l-1}^{\,k}
\lesssim
\|w_\theta\|_{C^l(\Omega)}\,C_{l-1}^{\,l}
\le
\epsilon\,C_{l-1}^{\,l}.
\end{equation}
Plugging this into \eqref{eq:alpha_Linf_bound} yields
\begin{equation}
\max_{|\alpha|=l}\|\partial^\alpha \nabla\phi_\theta^{-1}\|_{L^\infty(\Omega)}
\le
C(d,l,\Omega)\,\epsilon\,C_{l-1}^{\,l}.
\end{equation}
Choosing $\epsilon>0$ small enough (depending only on $d,s,\Omega$) and iterating over $l=1,\dots,m$ yields a uniform bound
\begin{equation}
\label{eq:Cm_bound_inverse}
\max_{|\alpha|\le m}\|\partial^\alpha \nabla\phi_\theta^{-1}\|_{L^\infty(\Omega)} \le C.
\end{equation}

For the non-integer part, let
$
r := (s+1) - m \in [0,1).
$
If $r=0$, we are done. Otherwise fix a multi-index $\alpha$ with $|\alpha|=m$.
From \eqref{eq:solve_alpha} we have
\begin{equation}
\label{eq:alpha_product_clean}
\partial^\alpha \nabla\phi_\theta^{-1}(y)
=
-\Bigl(I + Dw_\theta(\nabla\phi_\theta^{-1}(y))\Bigr)^{-1} R_\alpha(y).
\end{equation}
Using the $C^r$ product rule (as in the proof of \Cref{lemma:holder_composition}), we obtain
\begin{equation}
\label{eq:Cr_product_bound}
\begin{aligned}
&\|\partial^\alpha \nabla\phi_\theta^{-1}\|_{C^r(\Omega)} \\
&\lesssim
\left\|\Bigl(I+Dw_\theta\circ\nabla\phi_\theta^{-1}\Bigr)^{-1}\right\|_{L^\infty(\Omega)}\|R_\alpha\|_{C^r(\Omega)}
+
\left\|\Bigl(I+Dw_\theta\circ\nabla\phi_\theta^{-1}\Bigr)^{-1}\right\|_{C^r(\Omega)}\|R_\alpha\|_{L^\infty(\Omega)}.
\end{aligned}
\end{equation}
The first factor is bounded by $2$ by \eqref{eq:inv_matrix_bound}. We now bound the remaining terms.

First, each term in $R_\alpha$ is a product of factors of the form $\partial^k w_\theta\circ\nabla\phi_\theta^{-1}$
(with $2\le k\le m$) and $\partial^\gamma \nabla\phi_\theta^{-1}$ with $|\gamma|\le m-1$.
Iterating the same $C^r$ product rule and using \Cref{lemma:holder_composition} together with
\eqref{eq:ass_w_small} and the uniform $C^m$ bound \eqref{eq:Cm_bound_inverse} yields
\begin{equation}
\label{eq:Ralpha_Cr_bound}
\|R_\alpha\|_{C^r(\Omega)} + \|R_\alpha\|_{L^\infty(\Omega)} \le C\,\epsilon.
\end{equation}

Second, since $A\mapsto(I+A)^{-1}$ is smooth on $\{A:\|A\|\le 1/2\}$, we have
\begin{equation}
\label{eq:inverse_map_bound}
\left\|\Bigl(I+Dw_\theta\circ\nabla\phi_\theta^{-1}\Bigr)^{-1}\right\|_{C^r(\Omega)}
\lesssim
1 + \|Dw_\theta\circ\nabla\phi_\theta^{-1}\|_{C^r(\Omega)}.
\end{equation}
Moreover, by \Cref{lemma:holder_composition},
\begin{equation}
\label{eq:comp_bound_Dw}
\|Dw_\theta\circ\nabla\phi_\theta^{-1}\|_{C^r(\Omega)}
\lesssim
\|Dw_\theta\|_{C^r(\Omega)}\Bigl(1+\|\nabla\phi_\theta^{-1}\|_{C^{1}(\Omega)}^{r}\Bigr)
\lesssim
\|w_\theta\|_{C^{s+1}(\Omega)}
\le \epsilon,
\end{equation}
where we used \eqref{eq:lip_inverse} to control $\|\nabla\phi_\theta^{-1}\|_{C^1}$.
Plugging \eqref{eq:Ralpha_Cr_bound}--\eqref{eq:comp_bound_Dw} into \eqref{eq:Cr_product_bound} gives
\begin{equation}
\max_{|\alpha|=m}\|\partial^\alpha \nabla\phi_\theta^{-1}\|_{C^r(\Omega)} \le C.
\end{equation}

Combining \eqref{eq:Cm_bound_inverse} with the previous result yields
\begin{equation}
\|\nabla\phi_\theta^{-1}\|_{C^{s+1}(\Omega)} \le C,
\end{equation}
where $C$ depends only on $d$, $s$ and $\Omega$.
\end{proof}

\section{Additional details on the experiments} \label{appendix:add_details}
\label{sec:add_xp_details}

In this section, we provide additional details on the experiments presented in Section~\ref{sec:exp}, along with new examples that offer further insight into the advantages and limitations of our private transport map estimation mechanism, as well as its dependence on the various parameters involved in the estimation, such as the privacy budget, the regularization parameter, and the scale of the wavelet system. All experiments are run under central DP.

\paragraph{Resources.} All experiments were conducted on a local server equipped with an NVIDIA RTX A6000 GPU with 48 GB of memory.
The experiments displayed in \Cref{fig:combined}(a) and (b) took 3.11 and 9.08 hours, respectively. The experiment for the uniform-mixture transport in \Cref{fig:N2_J2_unif_mix1_mse} and \Cref{fig:n2_j3} took 4.85 and 5.79 hours, respectively. Finally, the large-sample experiment in \Cref{fig:large_sample_mse} took 13.6 hours. Thus, the total GPU time for the reported experiments was approximately 36 hours.

\subsection{Distributions used for the experiments} \label{sec:distr_details}

Throughout the experiments, we solve the problem of computing the transport map from the source distribution $P=U([0,1]^d)$, to a target distribution $Q\in \{Q_a,Q_b\}$, defined in each case as the probability in $\mathbb{R}^d$ with independent marginals with distribution:

\begin{itemize}
    \item[(a)] $U([0,1])$
    \item[(b)] Mixture of $U([0,1])$ and $\text{Beta}(3,3)$, with weights $1/5$ and $4/5$ respectively.
\end{itemize}
The OT map between $P$ and $Q_a$ is trivially $T=I_d$. In the second case, by independence of the marginals, the OT map is
\(
T(x_1,\ldots,x_d) = (T_1(x_1),\ldots,T_1(x_d)),
\)
where $T_1$ is the marginal OT map between the uniform distribution and the mixture, i.e., the mixture quantile function. Although it does not admit a closed-form expression, it can be easily approximated.

\subsection{Details on the wavelet construction}
\label{subsec:wavelet_details}

For our experiments, we use the implementation of \cite{holm2020orthonormal}, which reproduces the steps  of \cite{cohen1993wavelets} to construct the orthonormal system of boundary corrected Daubechies wavelet scaling functions of order $N$ for $L^2([0, 1])$, denoted as BC-dbN, for $N\geq 1$. \\

If we denote by $\phi$ the Daubechies scaling function  with support in $[0, 2N-1]$, the implementation following \cite{holm2020orthonormal} begins by selecting $j$ large enough to ensure that $\text{supp}(\phi_{j,0}) \subset [0,1]$, where $\phi_{j,k}(\cdot) = 2^{j/2}\phi(2^j \cdot - k)$. 

Given that $\text{supp}(\phi_{j,k}) \subset [\frac{k}{2^j}, \frac{k+2N-1}{2^j}]$, the previous condition translates to $j \geq J_0 := \ceil{\log_2(2N-1)}$. Under this condition, it follows that $\text{supp}(\phi_{j,k}) \subset [0,1]$ if and only if $0 \leq k \leq 2^j - 2N + 1$. To construct a system with $2^j$ functions, the set  $\{\phi_{j,k}\}_{k\in \mathbb Z}$ allows for $2^j - 2N + 2$ interior candidates, leaving room for $N-1$ additional boundary functions at each side. To ensure that BC-dbN spans polynomials up to degree $N-1$, the construction in \cite{holm2020orthonormal} omits the two outermost interior functions, leaving room for $N$ boundary corrected functions at each side. Thus, the resulting set of wavelet scaling functions can be decomposed as:
\begin{equation}
  \Phi_j^{BC} = \{ \phi_{j,k}^{BC} \}_{k=1}^{2^j} = \{ \phi_{j,k}^L \}_{k=1}^{N} \cup \{ \phi_{j,k} \}_{k=1}^{2^j - 2N} \cup \{ \phi_{j,k}^R \}_{k=1}^{N}  
\end{equation}
where $\phi_{j,k}$ is defined as before and $\phi_{j,k}^L,\phi_{j,k}^R,$ denote  the left and right boundary corrected scaling functions, which have support contained in $[0,\tfrac{2N-1}{2^j}]$ and $[1-\tfrac{2N+1}{2^j},1]$, respectively. Following the discussion in \cite[pp.~72-73]{cohen1993wavelets}, if $V_j = \mathrm{span}(\Phi_j^{BC})$, we obtain a sequence of nested subspaces 
\begin{equation}
    V_{J_0} \subset V_{J_0+1} \subset \ldots \subset V_{j} \subset \ldots
\end{equation} 
which, by their Theorem 4.4, verifies $\cup_{j\geq J_0} V_j = L_2([0,1])$. Alternatively, by their Proposition 4.2 and 4.3, for any $j\geq J_0$, one can build a wavelet basis $\Psi_j^{BC}$ of size $2^j$ of the subspace $V_{j+1}\setminus V_j$, of the form 
\begin{equation}
    \Psi_j^{BC} = \{ \psi_{j,k}^{BC} \}_{k=1}^{2^j} = \{ \psi_{j,k}^L \}_{k=1}^{N} \cup \{ \psi_{j,k} \}_{k=1}^{2^j - 2N} \cup \{ \psi_{j,k}^R \}_{k=1}^{N}
\end{equation}
with the same support properties as $\Phi_j^{BC}$. Therefore, for any $J\geq J_0$, if we denote $\Upsilon_J^{BC}=  \{ \upsilon_{j,k}^{BC} \}_{k=1}^{2^J}:= \Phi_{J_0}^{BC} \cup \bigl(\bigcup_{J_0\leq j < J} \Psi_j^{BC}\bigr)$,  then  
\begin{equation}
    V_J = \mathrm{span} \Bigl(\Phi_J^{BC} \Bigr) = \mathrm{span}\Bigl(\Upsilon_J^{BC} \Bigr)\ .
\end{equation}

Thus, we obtain two distinct bases for the subspace $V_J \subset L^2([0,1])$. The same reasoning applies in general dimension. If we denote $V_J^d $ the subspace of $L^2([0,1]^d)$ spanned by the products $\prod_{i=1}^d f_i(x)$, where $f_i \in V_J$, then it is generated by both tensor basis
\begin{align}
    \Phi_{J,d}^{BC} &= \{\phi_{d,J,k}^{BC}\}_{k=1}^{2^{Jd}} :=  \{ \prod_{i=1}^d f_i(x) : f_i \in  \Phi_{J}^{BC} \} \\
    \Upsilon_{J,d}^{BC} &= \{\upsilon_{d,J,k}^{BC}\}_{k=1}^{2^{Jd}} := \{ \prod_{i=1}^d f_i(x) : f_i \in  \Upsilon_{J}^{BC} \}     
\end{align}

Our experiments use the scaling basis $\Phi_{J,d}^{BC}$. This choice allows us to leverage the implementation of \cite{holm2020orthonormal} provided in the Python library \texttt{boundaryWavelets}. 
Constructing boundary-corrected wavelets is technically involved and, to the best of our knowledge, publicly available  implementations are very limited.

Although the choice between these two bases is equivalent from the perspective of $L^2([0,1]^d)$ subspace projection, a few remarks are necessary in our specific context. For simplicity, denote $\phi_k = \phi_{d,J,k}^{BC}$ and $\upsilon_k = \upsilon_{d,J,k}^{BC}$, for $k=1,\ldots, K:= 2^{Jd}$.

\paragraph{Differentially private estimator based on scaling functions:} As in Section \ref{sec:density_estimation_wasserstein}, a projection estimator for  $f_Z$ can be built as $\tilde{f}_Z := \sum_{k=1}^K \hat{\beta}_k \phi_k \ ,$ where $\hat{\beta}_k := \int_{\Omega} \phi_k dZ_n$ for each $k=1,\ldots,K$, followed by the normalization step $\hat{f}_Z := (\tilde{f}_Z \vee 0)/\int_{\Omega} (\tilde{f}_Z \vee 0)$. A private version of $\hat{f}_Z$ (under either central or local DP) can be obtained by adding noise to the coefficients $(\hat{\beta}_1, \ldots, \hat{\beta}_K)$ proportional to the $\ell_1$-sensitivity of the mapping $z \mapsto q(z) := \{ \phi_k(z) : 1 \leq k \leq K \}$. First, it is easy to show that the the maximum number of overlapping functions in $\Phi_{J,d}^{BC}$ is $(3N-2)^d$. Given that
\begin{equation}
    \sup_{1 \leq k \leq K} \|\phi_k\|_\infty = \sup_{1 \leq k \leq 2^{Jd}} \|\phi_{J,d,k}^{BC}\|_\infty =  \sup_{1 \leq k \leq 2^{J}} \|\phi_{J,k}^{BC}\|_\infty^d =: \|\Phi_J^{BC}\|_\infty^d \ ,
\end{equation}
the sensitivity of the query $q$ is bounded by $\Delta := 2 (3N-2)^d \|\Phi_J^{BC}\|_\infty^d$. Notably, the exponential dependence of the sensitivity on the dimension $d$ is not a consequence of an suboptimal choice of the basis $\Phi_J^{BC}$. As shown in Proposition \ref{prop:sensitivity}, this dependency appears explicitly in the term $2^{(J+1)d/2}$, and the constants $C_2, C_3$ are similarly affected, as high-dimensional wavelet bases are typically defined as tensor products of the one-dimensional case.

\paragraph{Basis dependence and private density estimation: }  Following the construction of \cite{cohen1993wavelets}, or simply using that both basis span $V_J^d$ and enjoy some regularity properties, it follows that there exists an orthogonal matrix $U = (u_{i,j})_{1\leq i,j\leq K}$ such that 
\begin{equation}\label{eq:change_basis}
   \phi_k(x) = \sum_{i=1}^K u_{k,i}\upsilon_i(x) \quad \forall \ x \in [0,1]^d \ , \ 1\leq k \leq K \ . 
\end{equation} 
Therefore, given a sample $(Z_1,\ldots,Z_n)$, a private density estimator using $\Phi_{J,d}^{BC}$ is
\begin{equation}
   \tilde f_Z = \sum_{k=1}^k \Bigl( \frac{1}{n}\sum_{j=1}^n \phi_k(Z_j) + L_k \Bigr) \phi_k 
\end{equation}
where $L_i, i=1, \ldots, K,$ are independent  random variables. Then, using \eqref{eq:change_basis}, one can conclude that
\begin{align}
\tilde f_Z &= \sum_{k=1}^K \left[ \frac{1}{n} \sum_{j=1}^n \left( \sum_{i=1}^K u_{k,i} \upsilon_i(Z_j) \right) + L_k \right] \left( \sum_{m=1}^K u_{k,m} \upsilon_m \right) \label{eq:substitution} \\
&= \sum_{m=1}^K \left[ \frac{1}{n}\sum_{j=1}^n\sum_{i=1}^K \left( \sum_{k=1}^K u_{k,m} u_{k,i} \right) \upsilon_i(Z_j) + \sum_{k=1}^K u_{k,m} L_k \right] \upsilon_m \label{eq:rearrangement} \\
&= \sum_{m=1}^K  \Bigl( \frac{1}{n}\sum_{j=1}^n \upsilon_m(Z_j) + L'_m \Bigr)\upsilon_m \label{eq:final_form}
\end{align}
where the last equality follows by the orthogonality of $U$, and $L'_m = \sum_{k=1}^K u_{k,m} L_k$.  This  highlights that in a non-private setting ($L_i = 0$), the choice of basis has no effect on the final estimator. This invariance also holds for Gaussian noise due to its rotational symmetry. However, in the Laplace case, the estimator derived from $\Phi_{J,d}^{BC}$ is not distributionally equivalent to one built from $\Upsilon_{J,d}^{BC}$. Under the orthogonal rotation $U$, the noise variables $L'_i$ lose their independence and no longer follow a Laplace distribution, despite preserving the original variance. While we do not extend the formal wavelet-based theory to this setting, we expect similar empirical performance in practice.

\paragraph{Exact implementation constants.}
Throughout the different experiments, we will work with the set of $d$-dimensional wavelet scaling functions $\Phi_{N,J,d}^{BC}$, where we also make explicit the dependence on the initial order $N$ of the BC-dbN system considered. In our experiments, we will work with the following admissible values of $(N,J)$:

\begin{itemize}
    \item $N=2,\ J=2 \ \Rightarrow \  \Phi_{N,J,d}^{BC}$ contains $4^d$ functions, and $ \Delta/2 \approx 13.88^d$
    \item $N=2,\ J=3 \ \Rightarrow \  \Phi_{N,J,d}^{BC}$ contains $8^d$ functions, and $ \Delta/2 \approx 19.63^d$
    \item $N=3,\ J=3 \ \Rightarrow \  \Phi_{N,J,d}^{BC}$ contains $8^d$ functions, and $ \Delta/2 \approx 35.86^d$
    \item $N=4,\ J=3 \ \Rightarrow \  \Phi_{N,J,d}^{BC}$ contains $8^d$ functions, and $ \Delta/2 \approx 61.47^d$
\end{itemize} 

where $\Delta$ denotes the upper bound on the sensitivity of the query $q$ defined in the previous section. 
Before addressing the density estimation problem, we first provide a visual assessment of the approximation accuracy of $\Phi_{N,J,d}^{BC}$ to estimate the density of $Q_a$ and $Q_b$.
To this end, one can compute the  $L^2([0,1]^d)$ projection by evaluating its inner products 
with the functions in $\Phi_{N,J,d}^{BC}$ for different pairs $(N,J)$. 
Figure~\ref{fig:projection} illustrates this approximation in the case $d = 2$.

\begin{figure}[h]
    \centering 
\includegraphics[width=.8\textwidth]{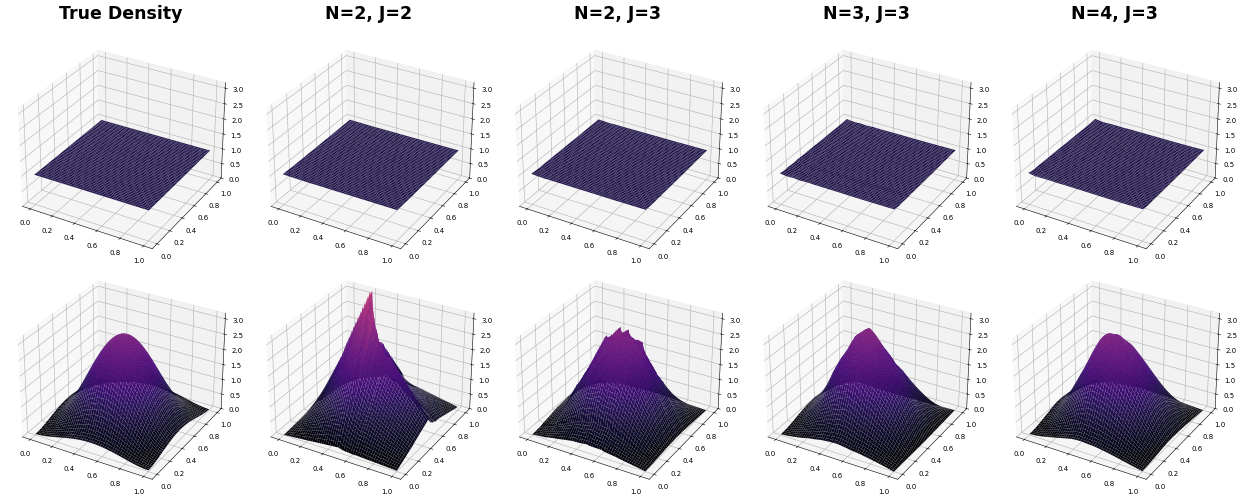}
\caption{Projection onto the subspace of $L^2([0,1]^2)$ generated by $\Phi_{N,J,d}^{BC}$ of the densities of $Q_a$ (top) and $Q_b$ (bottom), for different values of $(N,J)$.}
    \label{fig:projection}
\end{figure}

\subsubsection{Outperforming non-private empirical OT map estimation}

To begin with, we consider a series of simple examples that allow us to simultaneously illustrate both the advantages and the limitations of our approach. The main takeaway is that our methodology yields a practically implementable procedure that, as the theory suggests, can even outperform the standard non-private empirical optimal transport estimator. However, the sample size required to effectively observe this improvement is typically very large, as differential privacy is strongly affected by the dimensionality and the complexity of the system under consideration.

In all examples, we consider the following setup. 
Let $P$ and $Q$ be defined as above, and let $T$ denote the true transport map between $P$ and $Q$. Consider an admissible pair $(N,J)$.
For an increasing sequence of sample sizes 
$0 = n_0 < n_1 < \cdots < n_K$, 
the comparison is carried out as follows. 
At step $j \in \{1, \ldots, K\}$:

\begin{itemize}[leftmargin=1em]
    \item \textbf{Generation.} Draw independent samples of size $n_j - n_{j-1}$ from $P$ and $Q$, and add them to the samples from the previous step to obtain samples $X^{\mathrm{train}}$ and $Y^{\mathrm{train}}$ of size $n_j$, respectively. In the same way, generate independent test samples $X^{\mathrm{test}}$ and $Y^{\mathrm{test}}$ of the same size.

     \item \textbf{OT baseline.} Compute the OT map $\hat T$ between $X^{\mathrm{test}}$ and $Y^{\mathrm{test}}$ without regularization (using the standard implementation of \texttt{POT} \cite{flamary2021pot}) and estimate
    \[
    \mathrm{MSE}(\hat T)
    =
    \frac{1}{n_j}
    \sum_{i=1}^{n_j}
    \left\|
    \hat T(X^{\mathrm{test}}_i)
    -
    T(X^{\mathrm{test}}_i)
    \right\|^2.
    \]
    \item \textbf{EOT baseline.} Compute the OT map $\hat T_{\texttt{reg}}$ between $X^{\mathrm{train}}$ and $Y^{\mathrm{train}}$ with regularization parameter $\texttt{reg} = 0.001$ (using the online batch implementation of \texttt{OTT-JAX} \cite{cuturi2022optimal}) and estimate
    \[
    \mathrm{MSE}(\hat T_{\texttt{reg}})
    =
    \frac{1}{n_j}
    \sum_{i=1}^{n_j}
    \left\|
    \hat T_{\texttt{reg}}(X^{\mathrm{test}}_i)
    -
    T(X^{\mathrm{test}}_i)
    \right\|^2.
    \]

    \item \textbf{DP estimation.} For each $\epsilon$:
    \begin{enumerate}

    \item Compute the $\epsilon$-DP unnormalized density estimator based on $X^{\mathrm{train}}$, $Y^{\mathrm{train}}$ and $\Phi_{N,J,d}^{BC}$.
    
    \item Draw independent samples $X^{\mathrm{smooth}}_\epsilon$ and $Y^{\mathrm{smooth}}_\epsilon$ of size $n_j^{\mathrm{smooth}} = 5 n_j$  from the estimated densities, using rejection sampling. 
    To prevent excessively large rejection rates, we clip the densities at 100.

    \item Compute the OT map $\hat T_{\texttt{reg}}^\epsilon$ between $X^{\mathrm{smooth}}_\epsilon$ and $Y^{\mathrm{smooth}}_\epsilon$ with regularization parameter $\texttt{reg} = 0.001$ (using \texttt{OTT-JAX} as before), and estimate
    \[
    \mathrm{MSE}(\hat T_{\texttt{reg}}^\epsilon)
    =
    \frac{1}{n_j}
    \sum_{i=1}^{n_j}
    \left\|
    \hat T_{\texttt{reg}}^\epsilon(X^{\mathrm{test}}_i)
    -
    T(X^{\mathrm{test}}_i)
    \right\|^2.
    \]
    \end{enumerate}

\end{itemize}

Note that this comparison is not entirely fair, since the empirical map $\hat T$ is computed using the same points that are subsequently used to estimate the error. This follows the approach of \cite{hutter2021minimax} and is intended to avoid the additional error induced by using an interpolation method to define $\hat T$ outside the support points $X^{\mathrm{test}}$.

We begin by considering the simplest two-dimensional example among all combinations in the previous section. 
Specifically, we take $Q = Q_a = U([0,1]^2)$ and set $N = J = 2$.  
Figures \ref{fig:combined}(a), \ref{fig:N2_J2_unif_unif_density_Y} and \ref{fig:N2_J2_unif_unif_diff_transport} summarize the results of this experiment, from which several conclusions can be drawn.

\begin{figure}[h!]
    \centering 
\includegraphics[width=.85\textwidth]{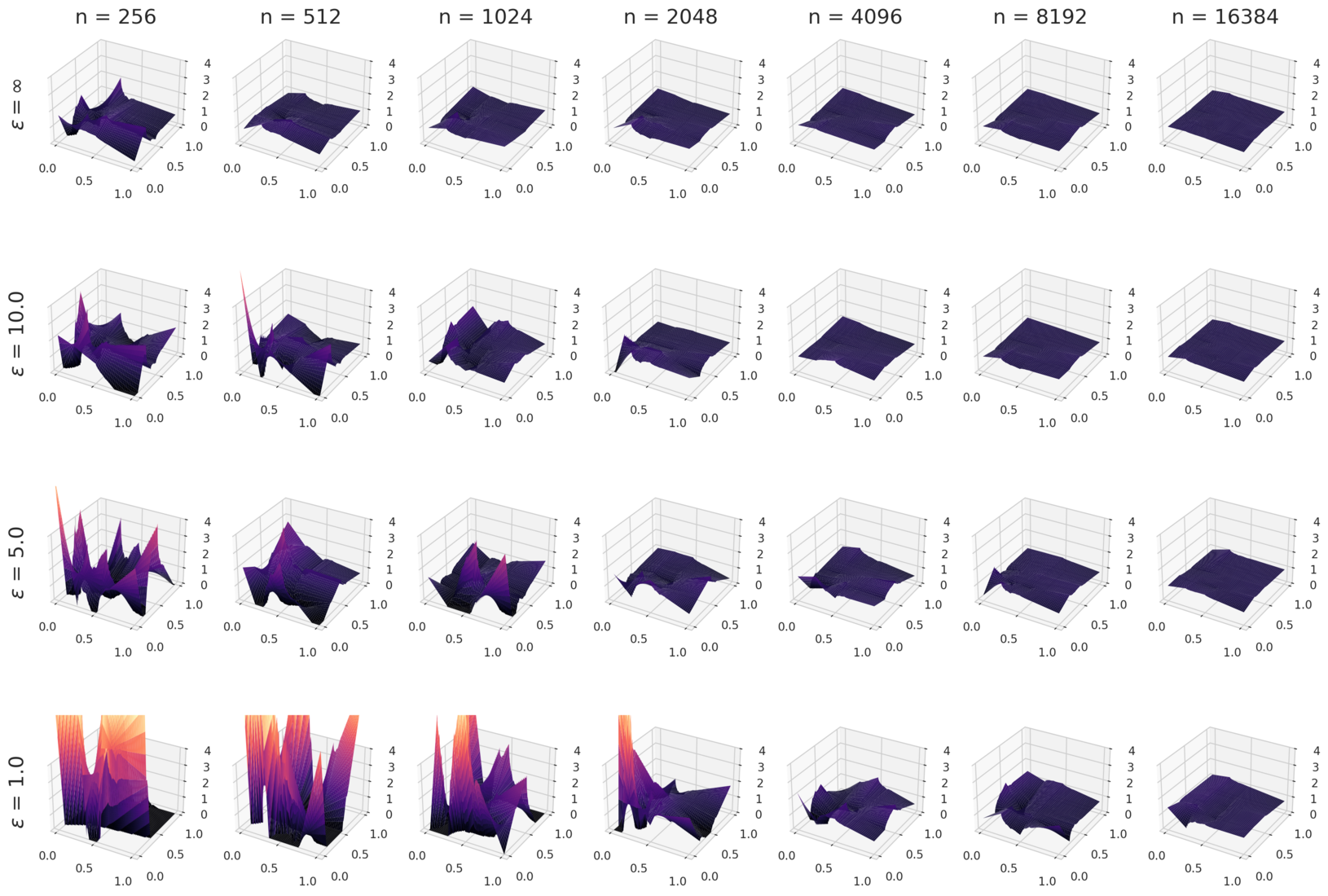}
\caption{\textbf{d=2, uniform-uniform, N=2, J=2.} Reconstructed target densities for a single repetition across sample sizes $n$ (columns) and central privacy budgets $\epsilon$ (rows).}
    \label{fig:N2_J2_unif_unif_density_Y}
\end{figure}

\begin{figure}[h!]
    \centering 
\includegraphics[width=.85\textwidth]{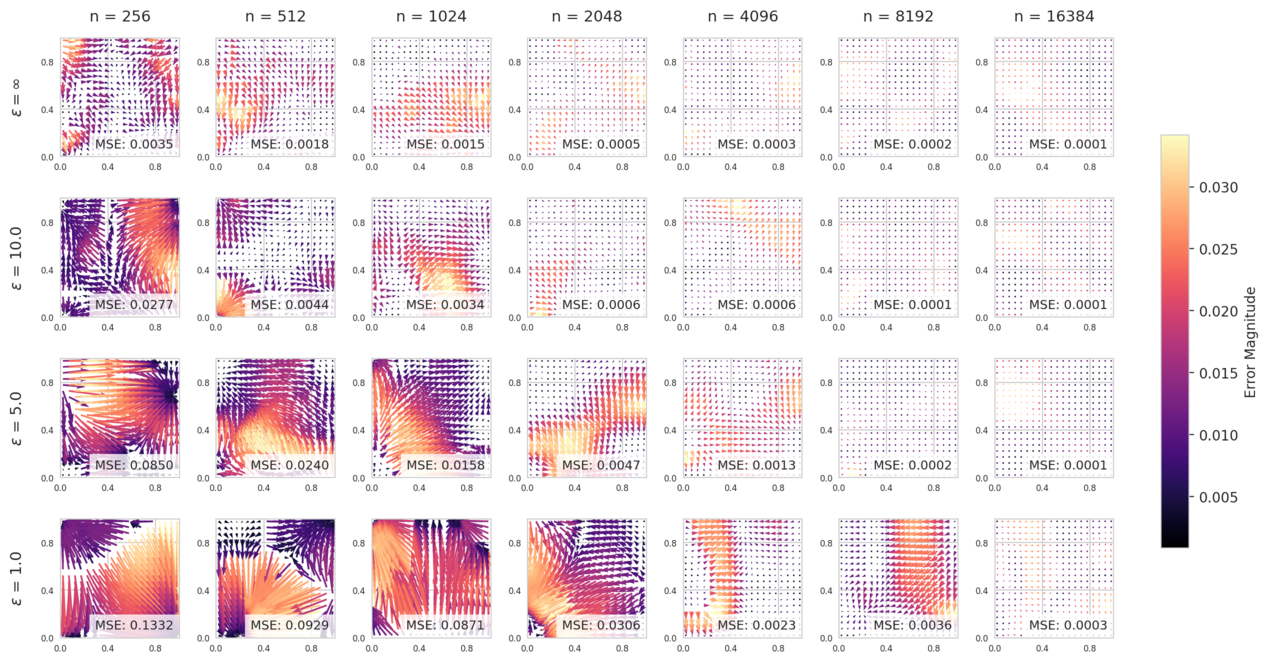}
\caption{\textbf{d=2, uniform-uniform, N=2, J=2.} Plots of the residual $\hat T_{\texttt{reg}}^\epsilon(x) - T(x)$ across sample sizes $n$ (columns) and central privacy budgets $\epsilon$ (rows). Vector color is proportional to the norm of the difference}
\label{fig:N2_J2_unif_unif_diff_transport}
\end{figure}

Figure~\ref{fig:combined}(a) shows the average mean squared error  alongside the different $\epsilon$-DP and baseline estimators. Several observations can be made. 
First, comparing the empirical OT map with the wavelet estimator obtained using our procedure with $\epsilon = \infty$, it follows that the smoothing approach improves performance.
Second, for small sample sizes $n$, the impact of privacy is severe. This degradation is a direct consequence of the large value of $\Delta$, as discussed in the previous section. Figure~\ref{fig:N2_J2_unif_unif_density_Y} further illustrates this effect by showing the reconstructed target densities across different sample sizes and privacy budgets, highlighting the poor quality of the $\epsilon$-DP density estimates for small $n$.
Third, for sufficiently large sample sizes, the effect of privacy progressively vanishes. In this regime, the $\epsilon$-DP estimators eventually match, and in some cases even outperform, the non-private empirical estimator, as seen in Figure~\ref{fig:combined}(a).

Additional qualitative insight is provided by Figure~\ref{fig:N2_J2_unif_unif_diff_transport}, which displays the residual transport maps $\hat T_\texttt{reg}^\epsilon(x) - T(x)$. These plots highlight how both increasing the sample size and relaxing the privacy constraint (larger $\epsilon$) lead to progressively more accurate transport map estimators.

The improved performance of the smoothing approach is of great interest and is consistent with the theoretical results of Section~\ref{sec:transport_map_estimation}. 
However, as suggested by the figure, this improvement is not due to different estimation rates, which aligns with the fact that, in dimension $d = 2$, the empirical estimator does not suffer from the curse of dimensionality. 
Rather, the improvement stems from the fact that $\Phi_{N,J,d}^{BC}$ provides a good approximation of the true density; indeed, the uniform density lies in the generated subspace.

Similar behavior can be expected in more complex settings, provided that  $\Phi_{N,J,d}^{BC}$ accurately captures the underlying density. 
We now replicate the previous experiment with $Q = Q_b$. 
In this case, a simple visual inspection of Figure~\ref{fig:projection} shows that the basis corresponding to $N = 2$ and $J = 2$ yields a poor approximation of the target density.
This limitation directly affects the accuracy of the resulting smooth transport map estimators, as illustrated in Figures~\ref{fig:N2_J2_unif_mix1_mse}--\ref{fig:N2_J2_unif_mix1_diff_transport}. 
In particular, while the effect of privacy vanishes for large sample sizes, the error induced by the density approximation persists.
A natural alternative is to consider a finer basis, for instance by taking $J = 3$. 
However, in this case, private estimation becomes considerably more challenging, not only because of the increased value of the sensitivity bound $\Delta$, but also because the length of the noise vector increases from 16 to 64. 
This substantially degrades the quality of the noisy estimates, as shown in Figures~\ref{fig:n2_j3}--\ref{fig:N2_J3_unif_mix1_diff_transport}. 
As a result, larger sample sizes would be required to compensate for the effect of privacy, which is not compatible with the computation strategy of this section.
Further analysis of the trade-off between sample size, privacy, and the choice of $(N,J)$ is deferred to the next section.

Finally, one may consider analogous examples in dimension $d > 2$ to assess whether private estimators can still outperform the non-private baseline. 
However, the exponential growth of the sensitivity bound with $d$ requires very large sample sizes, making such experiments computationally demanding.
For this reason, we restrict our analysis to the case $d = 3$ and to the simplest setting in which $Q$ is again the uniform distribution, so that it can be well approximated using the basic scaling system with parameters $N = 2$ and $J = 2$. 
Even in this simple case, as noted above, we have $\Delta > 5000$.
The corresponding MSE results are shown in Figure~\ref{fig:combined}(b). 
In this experiment, we consider privacy levels $\epsilon \in \{5, 10, 30\}$. 
The value $\epsilon = 30$ is intentionally chosen to be very large in order to illustrate the regime in which the effect of privacy becomes negligible.

\subsubsection{Large sample comparison}

As discussed in the previous section, the smoothness-based approach provides an effective method to estimate the optimal transport map in the absence of privacy constraints. However, in the private setting, a practical bottleneck arises due to the large constants involved in the sensitivity bounds. The goal of this section is therefore to provide further insight into the utility gap induced by noise addition in the private framework.
To this end, we consider again the mixture example, in dimension 2, with larger sample sizes. In all cases, we compare the private estimator (for fixed $\epsilon = 3$) to the non-private estimator obtained using our smoothness approach, for different values of the parameters $(N,J)$. To enable effective computation in this large-sample regime, we do not compute baselines. In addition, we draw a test sample of fixed size $n_{\text{test}} = 10000$, as well as a smooth sample generated via rejection sampling (step 2) of fixed size $n_{\text{smooth}} = 10000$.

Fixing $n_{\text{smooth}}$ implies that the estimator $\hat T_\texttt{reg}^\epsilon$, computed using a fixed number of smooth samples, will not outperform the baselines whenever $n \geq n_{\text{smooth}}$, as discussed in Section 6 in \cite{NilesWeed2019Minimax}. While outperforming the empirical estimator was the goal of the previous section, our objective here is different: we aim to illustrate, across several examples and for varying values of $(N,J)$, the approximate sample size required for the effect of privacy to become negligible. For this purpose, the present experimental setting is sufficient.

Finally, in this example we also include the scaling wavelet system corresponding to $N = 1$, namely the Haar system. In the non-private setting, the resulting density estimator is exactly a histogram. In the private setting, using the scaling wavelet basis $\Phi_{1,J,d}^{BC}$ yields the standard private histogram estimator, obtained by adding independent Laplace noise to the bin weights. This is no longer the case when using the full wavelet basis $\Upsilon_{1,J,d}^{BC}$ defined in Section~\ref{subsec:wavelet_details}, which includes both scaling and wavelet functions and leads to overlapping noise.

Although the theoretical results developed in this work do not apply in this case, it provides an important practical comparison. For $\Phi_{1,J,d}^{BC}$, the basis functions have disjoint supports, and it is straightforward to verify that, using standard notation,
\(
\Delta = 2 \cdot 2^{Jd/2}.
\)
While this quantity still exhibits exponential dependence on the dimension $d$, the associated constant is significantly smaller, which substantially mitigates the impact of privacy in practice. Indeed, if $\{ f_1,\ldots,f_{K}\}$ is an orthogonal system in $L^2([0,1]^d)$, then

\begin{align}
    \sup_{x,x'\in [0,1]^d} &\| (f_1(x),\ldots, f_K(x))-(f_1(x'),\ldots, f_K(x')) \|_1 \geq \\
    & \geq \sup_{x,x'\in [0,1]^d} \big| \| (f_1(x),\ldots, f_K(x))\|_1 -\|(f_1(x'),\ldots, f_K(x')) \|_1 \big|\\
    & \geq \sup_{x\in [0,1]^d}  \| (f_1(x),\ldots, f_K(x))\|_1 \\
    & \geq \sup_{x\in [0,1]^d}  \| (f_1(x),\ldots, f_K(x))\|_2 \\
    & = \Bigl( \sup_{x\in [0,1]^d}  \sum_{i=1}^K |f_i(x)|^2 \Bigr)^{1/2} \\
    & \geq  \Bigl(     \int_{[0,1]^d} \sum_{i=1}^K |f_i(x)|^2  dx \Bigr)^{1/2}\\
    & = K^{1/2}
\end{align}

which applied to $K=2^{Jd}$, shows that $\Phi_{1,J,d}^{BC}$ is optimal (up to a factor 2) in the sense of minimizing the noise required to obtain DP guarantees.

Figure~\ref{fig:large_sample_mse} shows that, in practice, although histograms are not minimax-optimal in theory, they provide a competitive solution in the non-private setting. Among the combinations $(N,J)$ considered, there exist values of $N$ for which the best non-private performance is achieved by scaling wavelet systems with $N > 1$, while the Haar system remains competitive. However, in the private setting, the Haar system clearly outperforms more complex scaling wavelet systems due to the smaller constants discussed above. This behavior is also evident in Figures~\ref{fig:large_sample_dens_Y_non_priv} and~\ref{fig:large_sample_dens_Y_priv}, where we display the estimated densities in the non-private and private settings, respectively.

\begin{figure}[h!]
    \centering 
\includegraphics[width=\textwidth]{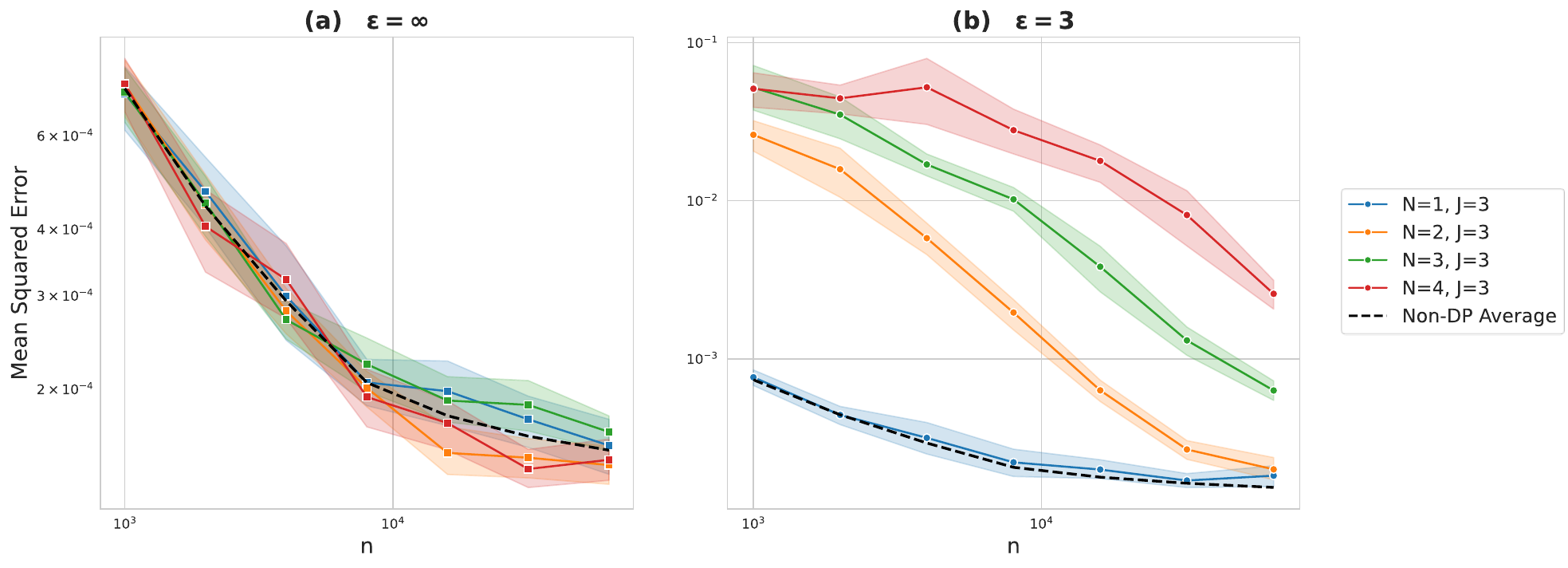}
\caption{\textbf{d=2, uniform--mixture, varying N, J.} 
Mean squared error  versus sample size $n$ (log--log scale) for different parameter pairs $(N, J)$ (colors), averaged over 10 independent runs. Results are shown for the non-private case $\epsilon=\infty$ in \textbf{(a)} and the private case $\epsilon=3$ in \textbf{(b)}. 
 For reference, we include a black dashed line that represents the average mean square error across all non-private models.}
\label{fig:large_sample_mse}
\end{figure}

\newpage

\subsection{Additional figures}

\begin{figure}[h!]
    \centering 
\includegraphics[width=.65\textwidth]{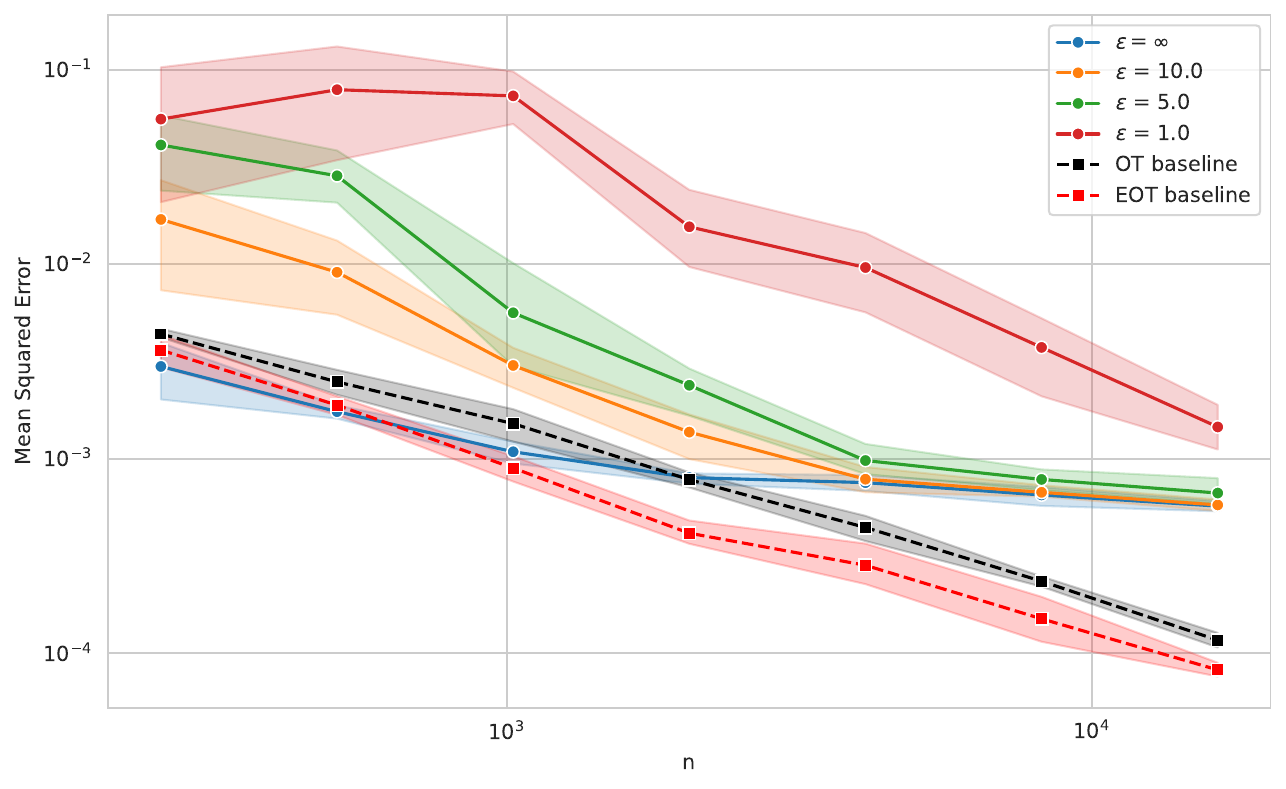}
\caption{\textbf{d=2, uniform--mixture, N=2, J=2.}  Mean squared error  vs. sample size $n$ for varying central privacy budgets $\epsilon$ (log-log scale), averaged over 5 independent runs. Solid lines represent the $\epsilon$-DP wavelet estimator;  black and red dashed lines denote the OT and EOT baselines, respectively. }
 \label{fig:N2_J2_unif_mix1_mse}
\end{figure}

\begin{figure}[h!]
    \centering 
\includegraphics[width=.85\textwidth]{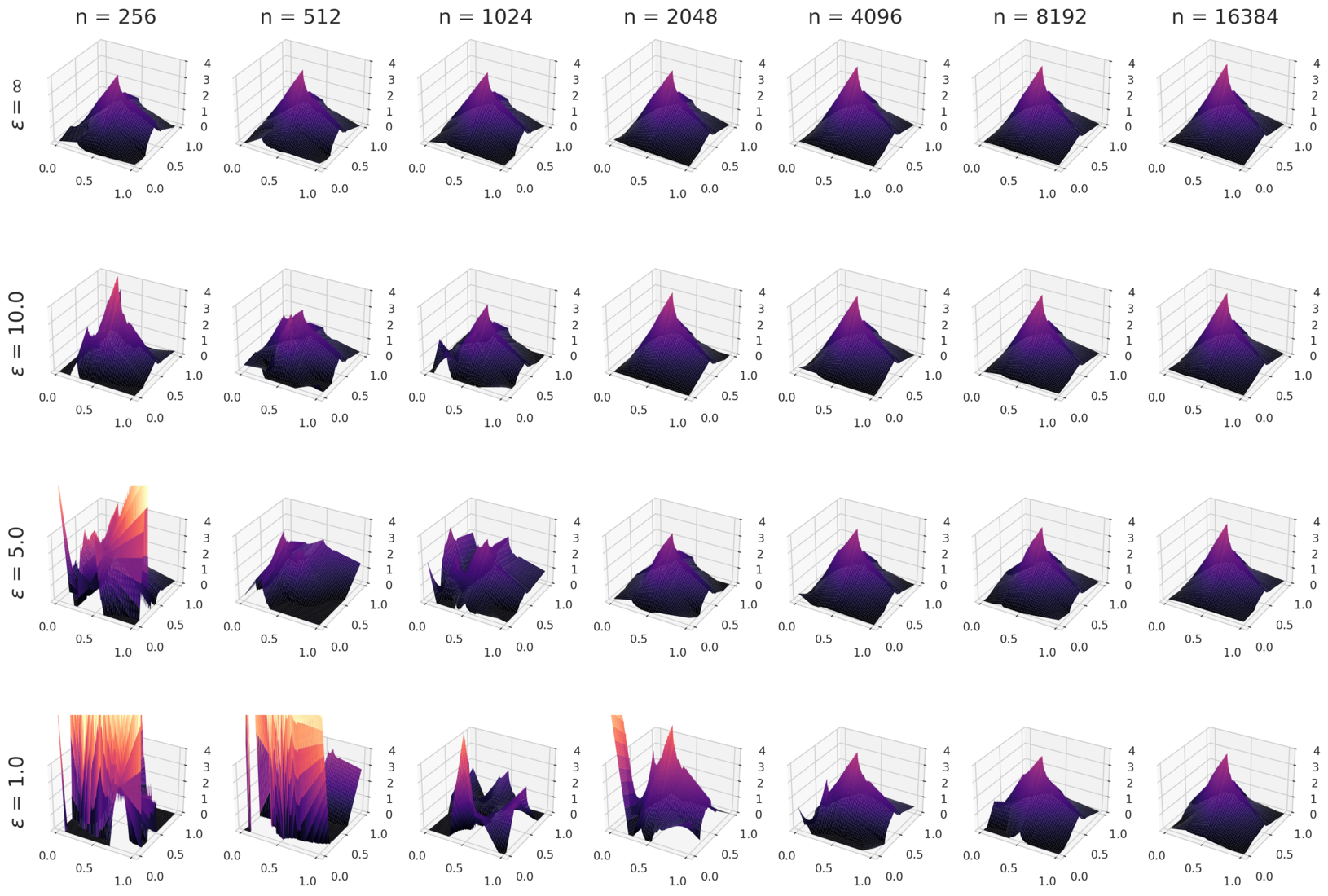}
\caption{\textbf{d=2, uniform--mixture, N=2, J=2.}  Reconstructed target densities for a single repetition across sample sizes $n$ (columns) and central privacy budgets $\epsilon$ (rows).}
    \label{fig:N2_J2_unif_mix1_density_Y}
\end{figure}

\begin{figure}[h]
    \centering 
\includegraphics[width=.85\textwidth]{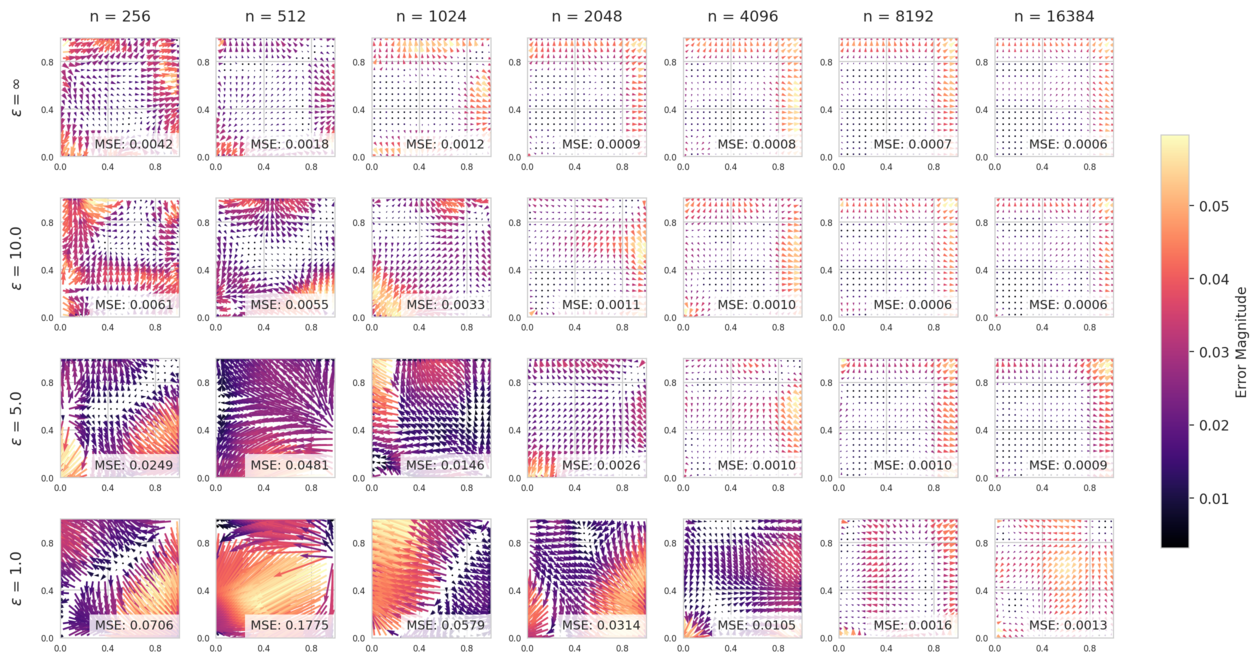}
\caption{\textbf{d=2, uniform--mixture, N=2, J=2.}  Plots of the residual $\hat T_{\texttt{reg}}^\epsilon(x) - T(x)$ across sample sizes $n$ (columns) and central privacy budgets $\epsilon$ (rows). Vector color is proportional to the norm of the difference.}
\label{fig:N2_J2_unif_mix1_diff_transport}
\end{figure}

\begin{figure}[h!]
    \centering 
\includegraphics[width=.65\textwidth]{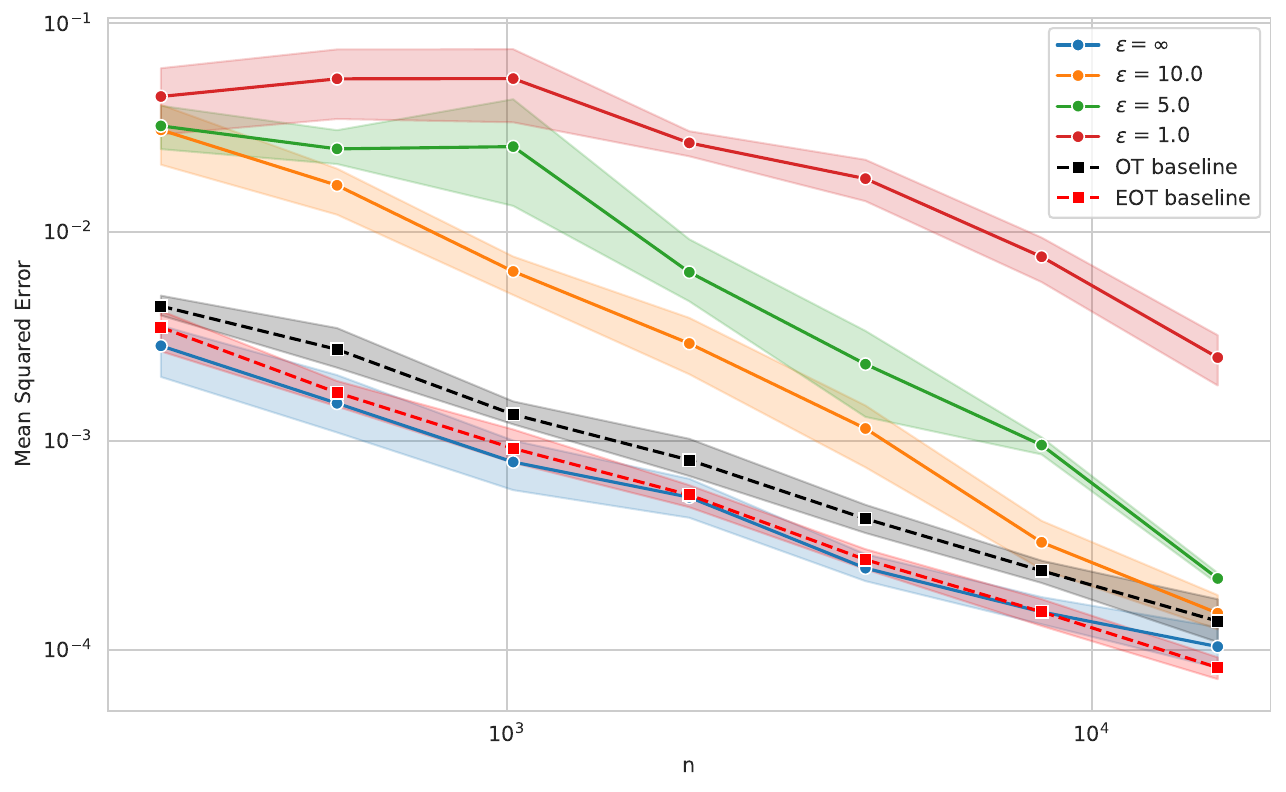}
\caption{\textbf{d=2, uniform--mixture, N=2, J=3.} Mean squared error  vs. sample size $n$ for varying central privacy budgets $\epsilon$ (log-log scale), averaged over 5 independent runs. Solid lines represent the $\epsilon$-DP wavelet estimator;  black and red dashed lines denote the OT and EOT baselines, respectively. }
\label{fig:n2_j3}
\end{figure}

\begin{figure}[h!]
    \centering 
\includegraphics[width=.85\textwidth]{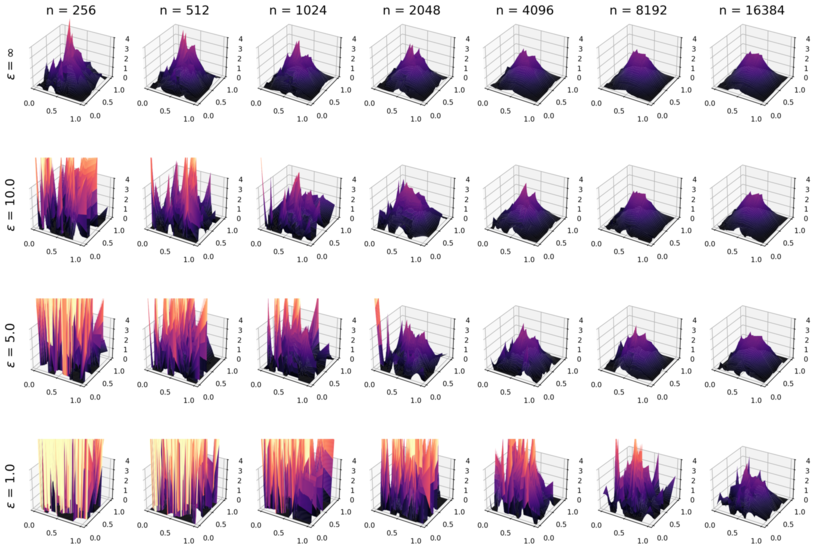}
\caption{\textbf{d=2, uniform--mixture, N=2, J=3.} Reconstructed target densities for a single repetition across sample sizes $n$ (columns) and central privacy budgets $\epsilon$ (rows).}
    \label{fig:N2_J3_unif_mix1_density_Y}
\end{figure}

\begin{figure}[h]
    \centering 
\includegraphics[width=.85\textwidth]{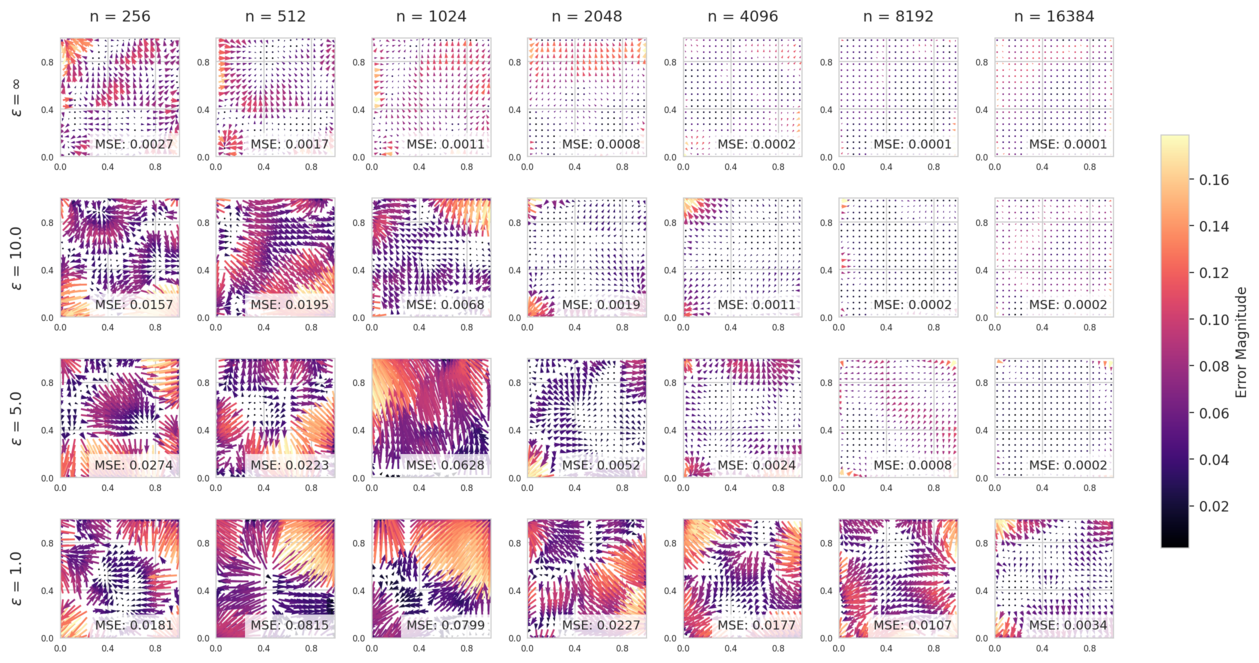}
\caption{\textbf{d=2, uniform--mixture, N=2, J=3.} Plots of the residual $\hat T_{\texttt{reg}}^\epsilon(x) - T(x)$ across sample sizes $n$ (columns) and central privacy budgets $\epsilon$ (rows). Vector color is proportional to the norm of the difference}
\label{fig:N2_J3_unif_mix1_diff_transport}
\end{figure}

\begin{figure}[h!]
    \centering 
\includegraphics[width=.85\textwidth]{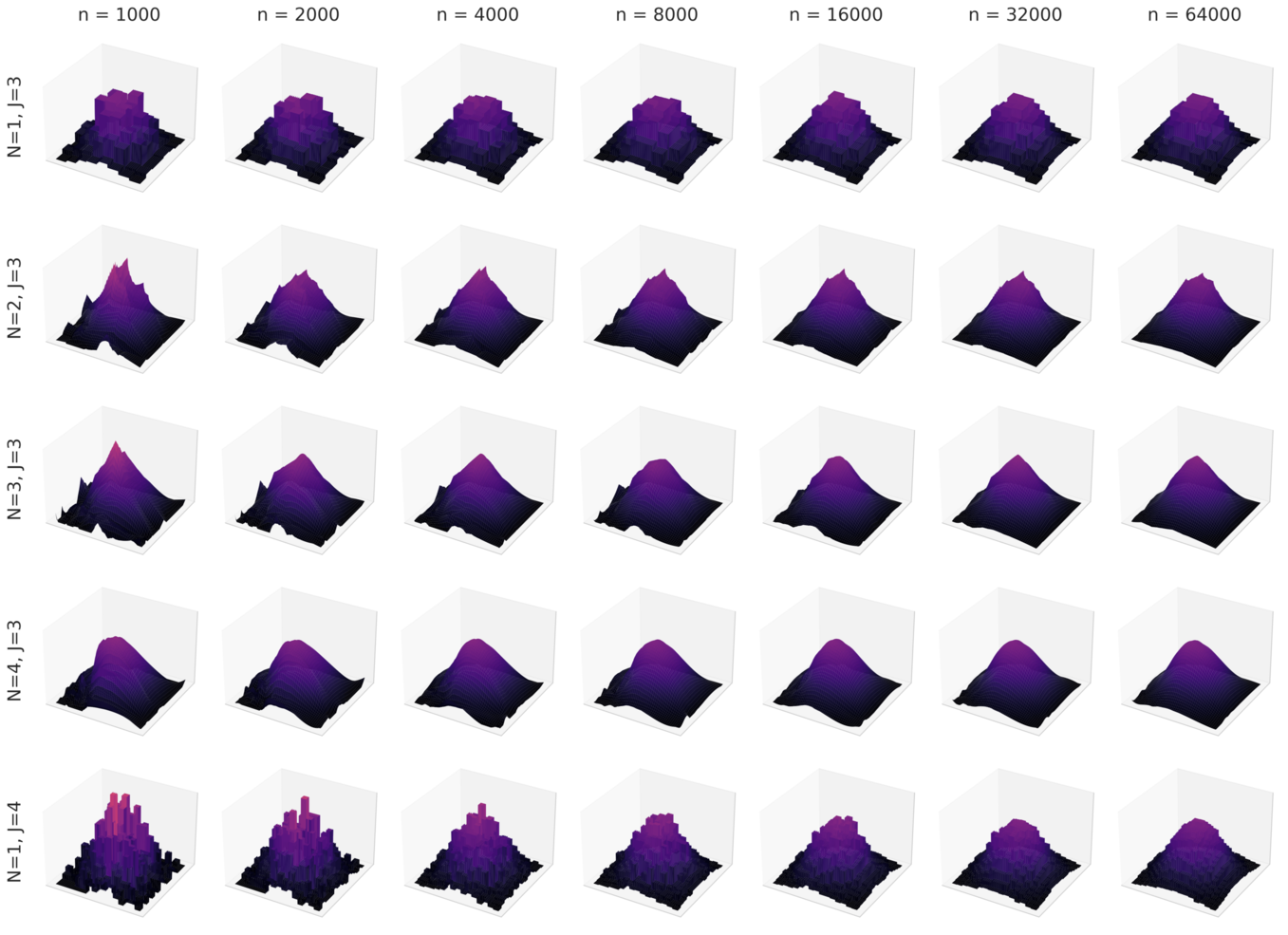}
\caption{\textbf{d=2, uniform--mixture,  varying N, J.} Reconstructed target densities for a single repetition across sample sizes $n$ (columns) and values of $(N,J)$ (rows), for $\epsilon=\infty$.}
\label{fig:large_sample_dens_Y_non_priv}
\end{figure}

\begin{figure}[h!]
    \centering 
\includegraphics[width=.85\textwidth]{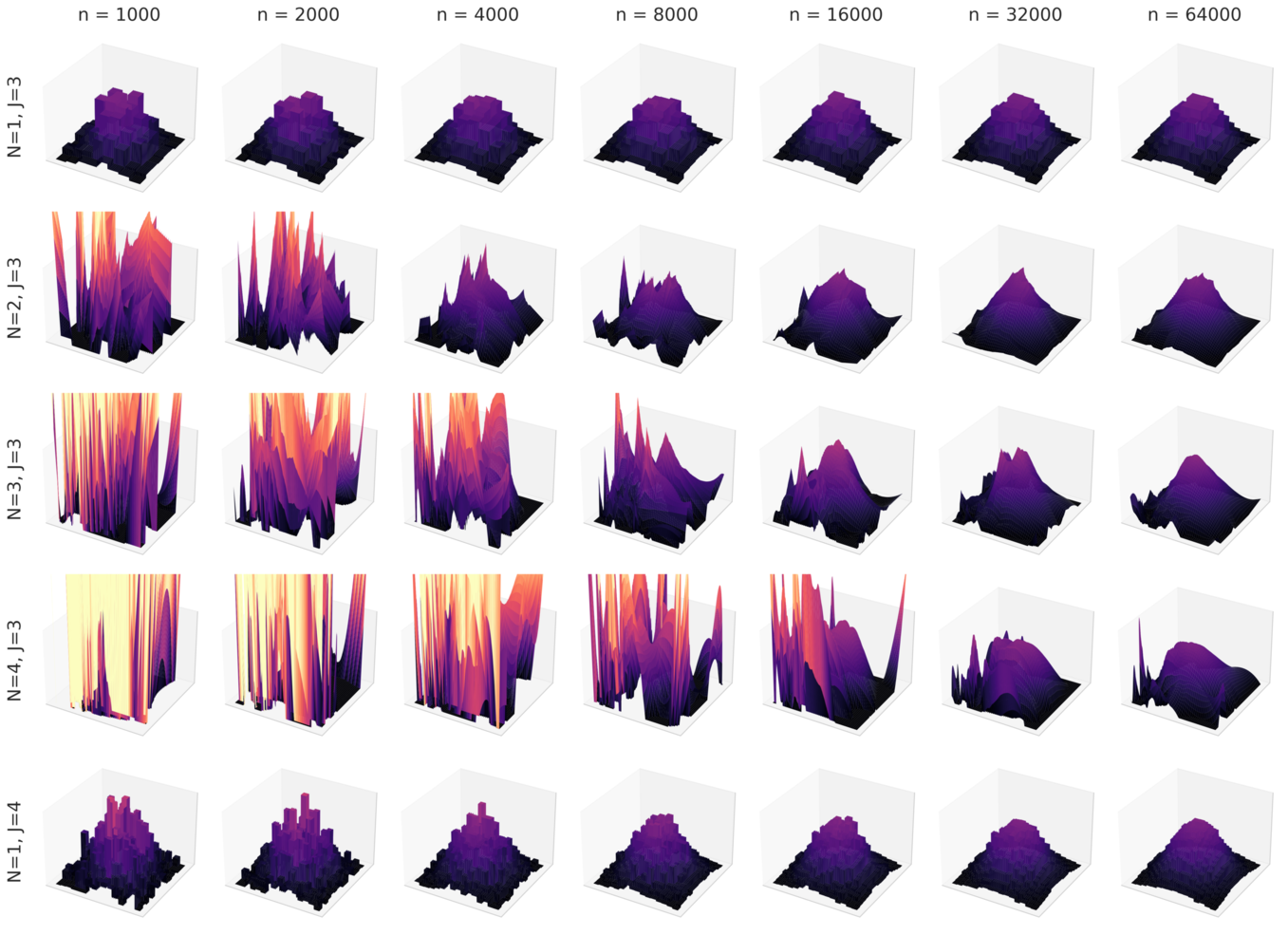}
\caption{\textbf{d=2, uniform--mixture,  varying N, J.} Reconstructed target densities for a single repetition across sample sizes $n$ (columns) and values of $(N,J)$ (rows), for central $\epsilon=3$.}
\label{fig:large_sample_dens_Y_priv}
\end{figure}

\clearpage

\end{document}